\tikzset{vert/.style={draw, fill=black, circle, inner sep=2pt}}
\newtheorem{thm}{Theorem}[section]
\newtheorem{lem}[thm]{Lemma}
\newtheorem{conj}[thm]{Conjecture}
\newtheorem{claim}[thm]{Claim}
\theoremstyle{definition}
\newtheorem{Def}[thm]{Definition}
\newtheorem*{rem}{Remark}
\crefname{equation}{equation}{equations}
\crefname{lem}{Lemma}{Lemmas}
\crefname{claim}{Claim}{Claims}
\crefname{thm}{Theorem}{Theorems}
\crefname{conj}{Conjecture}{Conjectures}
\newlist{lemenum}{enumerate}{1}
\setlist[lemenum]{label=(\alph*), ref=\thelem(\alph*)}
\newcommand\wh[1]{\widehat{#1}}
\newcommand\wt[1]{\widetilde{#1}}
\newcommand\ab[1]{\lvert#1\rvert}
\newcommand{\flo}[1]{\lfloor #1 \rfloor}
\newcommand{\cei}[1]{\lceil #1 \rceil}
\newcommand\Q{\mathcal{Q}}
\newcommand\K{\mathcal{K}}
\newcommand\h{\mathcal{H}}
\numberwithin{equation}{section}
\begin{document}

\begin{frontmatter}[classification=text]
%% EDITOR: this will force the keywords to appear right after the Abstract.
%%   If the abstract is too long and would force the keywords off the
%%   front page, please comment out % [classification=text] above
%%   This way the keywords will be floated on the bottom of the first page
%%   even though the Abstract spills over to the next page.

%%% AUTHOR: Title goes here.  This line is optional.  You must use it
%%   if title has footnote attached or requires nontrivial typesetting,
%%   e.g., inclusion of linebreaks to force nice layout.
% \title{Short Proof of R\"odl's $n^{\log\log n}$ Bound\titlefootnote{This is a footnote to the title}} %% please capitalize all significant words

%%% AUTHOR:
%%% List all authors. If you wish, place grant acknowledgements in \thanks.
%%% In brackets include a short tag for each author.
\author[jacob]{Jacob Fox\thanks{Research supported by a Packard Fellowship and by NSF Awards DMS-1953990 and DMS-2154129.}}
\author[yuval]{Yuval Wigderson\thanks{Research supported by NSF GRFP Grant DGE-1656518, ERC Consolidator Grant 863438, ERC Starting Grant 101044123, and NSF-BSF Grant 20196.}}
% \author[laci]{L\'aszl\'o Lov\'asz\thanks{Supported by...}}
% \author[andy]{Andrew Chi-Chih Yao\thanks{Supported by...}}

%%% AUTHOR: Abstract goes here
\begin{abstract}
	Extending an earlier conjecture of Erd\H os, Burr and Rosta conjectured that among all two-colorings of the edges of a complete graph, the uniformly random coloring asymptotically minimizes the number of monochromatic copies of any fixed graph $H$.  This conjecture was disproved independently by Sidorenko and Thomason. The first author later found quantitatively stronger counterexamples, using the \emph{Tur\'an coloring}, in which one of the two colors spans a balanced complete multipartite graph.

	We prove that the Tur\'an coloring is extremal for an infinite family of graphs, and that it is the unique extremal coloring. 
	This yields the first determination of the Ramsey multiplicity constant of a graph for which the Burr--Rosta conjecture fails.

	We also prove an analogous three-color result. In this case, our result is conditional on a certain natural conjecture on the behavior of two-color Ramsey numbers.
\end{abstract}
\end{frontmatter}

%%% AUTHOR: body of paper starts here
\section{Introduction}
\subsection{Background}
Let $H$ be a fixed graph on $t$ vertices. If $\chi:E(K_n) \to \{\text{red},\text{blue}\}$ is a two-coloring of the edges of a complete graph $K_n$, then we denote by $m(H,\chi)$ the number of monochromatic \emph{labeled} copies of $H$ in the coloring $\chi$. We also denote by $m(H,n)$ the minimum of $m(H,\chi)$ over all two-colorings of $E(K_n)$. A simple averaging argument shows that the sequence $c(H,n)\coloneqq m(H,n)/(n)_t$ is non-decreasing, where $(n)_t=n(n-1)\dotsb(n-t+1)$ is the falling factorial. Since this sequence is contained in $[0,1]$, we see that the limit
\[
	c(H) \coloneqq \lim_{n \to \infty} c(H,n)
\]
is well defined. This constant, $c(H)$, is called the \emph{Ramsey multiplicity constant} of $H$. It is often helpful to think of $c(H)$ probabilistically: it equals the asymptotic minimum over all two-colorings of $E(K_n)$ of the probability that a random map $V(H) \to V(K_n)$ yields a monochromatic copy of $H$. We note that by Ramsey's theorem, $c(H,n)$ is strictly positive for any graph $H$ and any sufficiently large $n$, which implies that $c(H)>0$ since the sequence is non-decreasing.

The earliest result about Ramsey multiplicity, preceding the definition, is due to Goodman \cite{MR107610} from 1959, and implies that $c(K_3) = \frac 14$. Since a random two-coloring of $E(K_n)$ has $\frac 14 (n)_3$ monochromatic labeled copies of $K_3$ in expectation, the random coloring shows the existence of a coloring matching Goodman's lower bound. A few years later, Erd\H os \cite{MR151956} conjectured that the random coloring asymptotically minimizes the number of monochromatic copies of $K_t$ for all $t$, and thus that $c(K_t) = 2^{1- \binom t2}$. Burr and Rosta \cite{MR595601} formulated a natural generalization of Erd\H os's conjecture, namely that $c(H) = 2^{1-e(H)}$ for every graph $H$. Since a random coloring of $K_n$ contains $2^{1-e(H)}(n)_t$ monochromatic labeled copies of $H$ in expectation,  we have $c(H) \leq 2^{1-e(H)}$, and the Burr--Rosta conjecture simply says that this upper bound is tight, i.e.\ that a random coloring asymptotically minimizes the number of monochromatic copies of $H$.

The Burr--Rosta conjecture was disproved by Sidorenko \cite{MR1033422}, who showed that it is false when $H$ is a triangle with a pendant edge. At roughly the same time, even the weaker conjecture of Erd\H os was disproved by Thomason \cite{MR991659}, who constructed, for every $t \geq 4$, colorings of $K_n$ with asymptotically fewer than $2^{1- \binom t2}(n)_t$ monochromatic labeled copies of $K_t$. A more dramatic counterexample to the Burr--Rosta conjecture was found by the first author \cite{MR2374234}, who constructed an infinite family of graphs $H$ with $m$ edges and with $c(H) \leq 2^{-\Omega(m \log m)}$, thus showing that the Burr--Rosta conjecture can be very far from true. 

The key to the examples from \cite{MR2374234} comes from the \emph{Tur\'an coloring} of $K_n$, namely the
coloring where the blue graph is a balanced complete $(k-1)$-partite graph and where the red graph consists of $k-1$ disjoint cliques, each of order $\lfloor\frac n{k-1}\rfloor$ or $\lceil\frac n{k-1}\rceil$. Let $r$ be the remainder when $n$ is divided by $k-1$, so there are $k-1-r$ cliques of order $\flo{\frac{n}{k-1}}$, and further $r$ cliques of order $\cei{\frac{n}{k-1}}$. Since the blue graph has chromatic number $k-1$, there can be no blue copies of any graph $H$ with chromatic number at least $k$. Moreover, if $H$ is connected, then any red copy of $H$ must appear inside one of the parts. If we let $t$ be the number of vertices in $H$, then the number of labeled copies of $H$ in the Tur\'an coloring is exactly
\[
	(k-1-r) \left(\left \lfloor{\frac{n}{k-1}}\right \rfloor\right)_t + r \left(\left \lceil{\frac{n}{k-1}}\right \rceil\right)_t \leq (k-1)^{1-t} (n)_t.
\]
Said differently, one sees that the probability that a random injection $V(H) \to V(K_n)$ has its image contained entirely within one of the $k-1$ parts is at most $(k-1)^{1-t}$. Thus, the Tur\'an coloring shows that for any connected graph $H$ with $t$ vertices and chromatic number $k$, we have
\begin{equation}\label{eq:turan-coloring-ub-mhn}
	m(H,n) \leq 
	 (k-1)^{1-t} (n)_t
\end{equation}
and therefore
\begin{equation}\label{eq:turan-coloring-ub-ch}
	c(H) \leq (k-1)^{1-t} =2^{-\Omega(t \log k)}.
\end{equation}
One way of picking a graph $H$ which optimizes this construction is to start with a clique of order $k$, and then to add $t-k$ pendant edges to this clique. Thus, $H$ has $\binom k2 + (t-k)$ edges,
and (\ref{eq:turan-coloring-ub-ch}) shows that $c(H)$ is super-exponentially small in the number of edges of $H$ if $k=\omega(1)$ and $t = \omega(k^2/{\log k})$.

Graphs $H$ which satisfy the Burr--Rosta conjecture, i.e.\ those with $c(H) = 2^{1-e(H)}$, are called \emph{common}. Sidorenko's conjecture (see e.g.\ \cite{MR1225933}), a major open problem in extremal graph theory, states that for any bipartite graph $H$ and any $p \in [0,1]$, a random graph with edge density $p$ has in expectation asymptotically the minimum number of copies of $H$ over all graphs of the same order and edge density.
It is straightforward to show that $H$ is common if $H$ satisfies Sidorenko's conjecture, so Sidorenko's conjecture implies that all bipartite graphs are common. Sidorenko's conjecture has been verified in many cases (e.g.\ \cite{MR4237083,MR2607540,MR3456171,MR3893193,MR2738996,1406.6738,1107.1153}), yielding a large class of bipartite graphs which are known to be common. Additionally, some non-bipartite graphs are known to be common, such as odd cycles \cite{MR1033422}, the five-wheel \cite{MR2959863}, the seven-wheel \cite{GrLeLiVo}, and certain graphs of arbitrary chromatic number \cite{2206.05800}. On the other hand, it is also known that most graphs are uncommon, since Jagger, \v S\v tov\'i\v cek, and Thomason \cite{MR1394515} proved that any graph containing $K_4$ is uncommon.

In general, we do not understand the Ramsey multiplicity constants of uncommon graphs. For instance, Thomason \cite{MR991659} proved that $K_4$ is uncommon by exhibiting an explicit coloring witnessing $c(K_4) < \frac 1{33}$. Using flag algebras, Nie\ss{} and Sperfeld \cite{1207.4714,1106.1030} independently showed that $c(K_4)> \frac 1{35}$. There have been very recent improvements \cite{GrLeLiVo,2206.04036} to the upper and lower bounds on $c(K_4)$, but it remains unclear what the true value of $c(K_4)$ is, as well as what the asymptotically optimal colorings look like.

\subsection{Our results}

The discussion above is actually somewhat prototypical of Ramsey theory. Namely, both the random coloring and the Tur\'an coloring arise naturally in many Ramsey-theoretic contexts, and sometimes one of the two is (asymptotically) tight. For instance, the study of Ramsey goodness (see e.g.\ \cite{MR2520282}) asks when the Tur\'an coloring yields optimal bounds for certain off-diagonal Ramsey numbers, and there are diagonal Ramsey problems (see e.g.\ \cite{MR4115773,CoFoWi}) for which the random coloring is known to be asymptotically optimal. In instances where neither the random coloring nor the Tur\'an coloring is tight, our knowledge is frequently limited, and there are many problems for which no optimal structure is known or conjectured.

Since common graphs have been extensively studied, and since the field of Ramsey goodness is very rich, it is perhaps surprising that there are no results about when the Tur\'an coloring is optimal for the Ramsey multiplicity problem. In this paper, we study this problem. We make the following definition.
\begin{Def}\label{def:bonbon}
	A connected graph $H$ on $t$ vertices is called a \emph{bonbon} if, for every sufficiently large $n$, the Tur\'an coloring of $E(K_n)$ with $\chi(H)-1$ parts is the unique two-edge-coloring of $K_n$ with the minimum number of monochromatic copies of $H$, up to permuting the colors and the vertices. 
\end{Def}
In particular, if $H$ is a bonbon, then $c(H) = (\chi(H)-1)^{1-t}$.
The word \emph{bonbon} is meant to be reminiscent of the ``good'' terminology in the study of Ramsey goodness, as well as in the recent work \cite{2012.12646,2006.03756} on \emph{Tur\'an goodness}, which studies when the Tur\'an graph is the unique extremizer for a certain extremal problem.
Our first main result proves that all graphs in a certain infinite family are bonbons.
To define this infinite family, we recall two standard definitions. First, a graph is called \emph{$k$-critical} if it has chromatic number $k$, but there is an edge\footnote{Note that some authors use the term \emph{$k$-critical} to mean something different, namely that the deletion of \emph{any} edge lowers the chromatic number.} whose deletion lowers the chromatic number. Second, given a graph $H_0$, we say that another graph $H$ is \emph{obtained from $H_0$ by adding pendant edges} if $H_0$ is an induced subgraph of $H$, all vertices in $V(H) \setminus V(H_0)$ have degree $1$, and their unique neighbor lies in\footnote{We stress that we make no assumption about which vertices of $H_0$ are the neighbors of the new vertices.} $V(H_0)$. 

The graphs that we prove are bonbons are 
obtained from a $k$-critical graph by adding sufficiently many pendant edges.
A result of Simonovits \cite{MR337690} states that if $H$ is $k$-critical and if $n$ is sufficiently large, then the Tur\'an graph is the unique $n$-vertex $H$-free graph with the maximum number of edges; because of this, $k$-criticality arises naturally in many questions involving the Tur\'an graph or Tur\'an coloring.

\begin{thm}\label{thm:main}
	Let $k \geq 4$, and let $H_0$ be a $k$-critical graph with $h$ vertices. Let $H$ be obtained from $H_0$ by adding $t-h$ pendant edges, where $t \geq (1000kh)^{10} h^{10k}$. Then $H$ is a bonbon. In particular, $c(H)=(k-1)^{1-t}$.
\end{thm}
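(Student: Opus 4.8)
The plan is to use a stability-type argument, showing that any near-optimal coloring must be very close to the Turán coloring, and then a local perturbation argument to show exact optimality and uniqueness. First I would set up the counting framework: fix a large $n$ and a coloring $\chi$ of $E(K_n)$ with at most $(k-1)^{1-t}(n)_t$ monochromatic labeled copies of $H$, the bound furnished by \eqref{eq:turan-coloring-ub-mhn}. Write $R$ and $B$ for the red and blue graphs. Since $H$ is obtained from the $k$-critical graph $H_0$ by pendant edges, a monochromatic copy of $H$ in a color class $G \in \{R,B\}$ decomposes as a copy of $H_0$ in $G$ together with $t-h$ further neighbors (in $G$) of the appropriate vertices of that $H_0$-copy. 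Thus the number of monochromatic copies of $H$ is governed by a weighted count of monochromatic copies of $H_0$, where an $H_0$-copy on vertex set $S$ is weighted by $\prod$ of falling-factorial-like terms in the monochromatic degrees of its vertices. Because $t-h$ is enormous compared to everything else, this weight is extremely sensitive to the degrees: a copy of $H_0$ all of whose vertices have monochromatic degree about $n/(k-1)$ contributes hugely more than one touching a low-degree vertex.

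The heart of the argument is therefore a two-step structural analysis. Step one: most vertices of $K_n$ must have red-degree and blue-degree both close to $n/(k-1)$ and $(k-2)n/(k-1)$ in one of the two colors — more precisely, for all but $o(n)$ vertices, the larger color-degree is $(1+o(1))\tfrac{k-2}{k-1}n$. Indeed, if too many vertices had, say, red-degree bounded away from both $0$ and $\tfrac{k-2}{k-1}n$, then either the red graph or the blue graph would be dense enough to contain many copies of $H_0$ with all vertices of high degree — by a supersaturation/Kővári–Sós–Turán type estimate combined with $k$-criticality of $H_0$ — and then the pendant-edge blow-up would force more than $(k-1)^{1-t}(n)_t$ monochromatic copies of $H$, a contradiction. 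Step two: upgrade this to genuine structure. The blue graph, restricted to the bulk of vertices, has essentially no $H_0$ and minimum degree close to $\tfrac{k-2}{k-1}n$; by the stability version of Simonovits's theorem (using that $H_0$ is $k$-critical, so $\mathrm{ex}(n,H_0)$ is asymptotically the Turán number $t_{k-1}(n)$ with the Turán graph as unique extremizer), the blue graph on the bulk is $o(n^2)$-close to a balanced complete $(k-1)$-partite graph. Equivalently, the red graph is $o(n^2)$-close to a disjoint union of $k-1$ cliques of order $\approx n/(k-1)$.

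Having forced this approximate Turán structure, the final step is a clean-up/exchange argument to pin down the coloring exactly. Let $V_1,\dots,V_{k-1}$ be the approximate parts. I would show first that the part sizes must be exactly balanced (sizes $\lfloor n/(k-1)\rfloor$ or $\lceil n/(k-1)\rceil$): the dominant term in the count of monochromatic $H$ is $\sum_i (|V_i|)_t$ up to lower-order corrections coming from misplaced edges, and this sum is minimized, for fixed total, precisely at the balanced partition, with any imbalance costing $\Omega\big((|V_i|)_{t-1}\big) = \Omega(n^{t-1}/(k-1)^{t-1})$, which dominates the $O(n^{t-1})\cdot o(1)$-type savings one could hope to extract from a non-Turán structure once $t$ is as large as in the hypothesis. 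Then I would show no edge can be misplaced: a single red edge inside some $V_i^c$ across parts, or a single blue edge inside some $V_i$, together with the pendant-edge blow-up, strictly increases the monochromatic $H$-count relative to the pure Turán coloring — again because one misplaced edge creates $\Omega(n^{t-2})$ extra monochromatic copies (via $H_0$ using that edge plus the abundant pendant completions) which is not compensated elsewhere. This simultaneously gives exact optimality of the Turán coloring and its uniqueness up to permuting colors and vertices, proving $H$ is a bonbon and $c(H) = (k-1)^{1-t}$.

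The main obstacle I anticipate is making the stability step (Step two, and the transition to exact structure) sufficiently quantitative: one needs the "closeness" to the Turán coloring to be better than $o(n^2)$ misplaced edges — rather, one needs a bound like $o(n^2)$ that, when fed into the pendant-blow-up, still loses to the $\Omega(n^{t-1})$ and $\Omega(n^{t-2})$ gaps above. Because the pendant count amplifies degree defects so violently, a crude stability bound may actually suffice, but tracking the dependence of all error terms on $k$, $h$, and $t$ — and checking that the threshold $t \geq (1000kh)^{10}h^{10k}$ is what makes the inequalities go through — is where the real work lies. A secondary subtlety is handling vertices in the "junk" set of size $o(n)$ whose degrees are atypical: one must argue they are too few to host the required number of $H_0$-copies and then absorb them into the parts without disturbing the exact balance argument.
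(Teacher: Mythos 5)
Your proposal follows essentially the same route as the paper: pendant-edge amplification of degree defects to force every monochromatic degree close to $n/(k-1)$, a supersaturation-plus-stability step (the paper uses a quantitative stability lemma for graphs with near-Tur\'an minimum degree and few copies of $K_k$) to extract an approximate $(k-1)$-partition, single-edge recoloring to eliminate misplaced edges, and convexity of $x\mapsto (x)_t$ to force balanced parts; the paper additionally relies on a per-vertex regularity lemma (in an optimal coloring every vertex lies in roughly the average number of monochromatic copies), which is what makes the local amplification and exchange arguments close. The one step that would fail as written is your ordering of the endgame: you propose to prove exact balance of the parts \emph{before} cleaning up misplaced edges, asserting that any imbalance costs $\Omega(n^{t-1})$. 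But an imbalance of $O(1)$ between two parts changes $\sum_i (|V_i|)_t$ by only a second difference, $t(t-1)(n/(k-1))^{t-2}(1+o(1))$, which is of the same order (up to $\exp(O(t))$ factors) as the effect of a single misplaced edge, so the claimed domination does not hold and exact balance cannot be extracted at that stage. The paper avoids this by first showing, via the recoloring arguments (which crucially use $k$-criticality of $H_0$, so that one blue edge inside a part creates $\Omega(c^t n^{t-2})$ blue copies of $H$ while its recoloring creates strictly fewer red ones), that the coloring is \emph{exactly} a disjoint union of red cliques with complete multipartite blue; only then is Jensen's inequality applied to $\sum_i (|V_i|)_t$, where there are no competing error terms and exact balance and uniqueness follow. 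With those two final steps swapped, your outline matches the paper's proof.
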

This gives the first examples of uncommon graphs whose Ramsey multiplicity constants are known exactly. 
Natural examples of graphs $H$ to which \cref{thm:main} applies are \emph{starbursts} (obtained from $K_k$ by adding an equal number of pendant edges to each vertex of $K_k$; see \cite{2111.05420} for the terminology) and \emph{pineapples} (obtained from $K_k$ by adding pendant edges to a single vertex; see e.g.\ \cite{MR4016583} for the terminology). In both of these examples, we set $H_0=K_k$, which is a $k$-critical graph.

All three of the assumptions of \cref{thm:main}, namely that $H_0$ is $k$-critical, $k$ is at least $4$, and $t$ is sufficiently large, are necessary, as we now explain. First, suppose that $H_0$ is not $k$-critical. Then if we begin with the Tur\'an coloring and recolor a single edge inside one part from red to blue, then we strictly decrease the number of red copies of $H$. On the other hand, this new coloring still contains no blue $H_0$, and thus no blue $H$, for an embedding of $H_0$ into the blue graph precisely corresponds to a proper $(k-1)$-coloring of a graph obtained from $H_0$ by deleting an edge. This shows that if $H_0$ is not $k$-critical, then $H$ is not a bonbon. 

Next, if $k=3$, the Tur\'an coloring yields an upper bound $c(H) \leq 2^{1-t}$ as $H$ has $t$ vertices. Since $H$ is a connected graph with at least one cycle, we have that $t \geq e(H)$, and thus this upper bound is no stronger than the upper bound $c(H) \leq 2^{1-e(H)}$ given by the random coloring. If $H_0$ has at least two cycles, then this shows that $H$ cannot be a bonbon, as the random coloring yields a stronger upper bound than the Tur\'an coloring. Additionally, if $H_0$ has exactly one cycle, then if $H$ is a bonbon, $H$ is in particular common. However, Jagger, \v{S}\v{t}ov\'{\i}\v{c}ek, and Thomason \cite[Theorem 4]{MR1394515} showed that if $H$ is obtained from a non-bipartite graph by adding sufficiently many pendant edges, then $H$ is uncommon. 

Finally, the assumption in \cref{thm:main} that we add sufficiently many pendant edges is also necessary, for a similar reason. For example, if $H_0=K_k$, and if we add $o(k^2/\log k)$ pendant edges, then the random coloring will again have fewer monochromatic copies of $H$ than the Tur\'an coloring.

\subsection{More colors}
Our second main result concerns Ramsey multiplicity for more colors, which we now introduce. For any integer $q \geq 3$, let $m_q(H,n)$ denote the minimum number of monochromatic labeled copies of $H$ in any $q$-coloring of the edges of $K_n$. As in the case of two colors, a simple averaging argument shows that the limit
\[
	c_q(H) = \lim_{n \to \infty} \frac{m_q(H,n)}{(n)_t}
\]
exists, and is called the \emph{$q$-color Ramsey multiplicity constant} of $H$. As before, $c_q(H)$ is strictly positive by Ramsey's theorem and by the non-decreasing property. For $q \geq 3$, our understanding of $q$-color Ramsey multiplicity is even more limited than it is for two colors. As in the case of two colors, a random coloring shows that $c_q(H) \leq q^{1-e(H)}$, and graphs $H$ for which this bound is tight are called \emph{$q$-common}. Sidorenko's conjecture again implies that any bipartite graph is $q$-common for all $q$, and a simple product coloring shows that any non-bipartite $H$ is \emph{not} $q$-common for all sufficiently large $q$. In the other direction, Kr\'a\v l, Noel, Norin, Volec, and Wei \cite{2006.09422} recently showed that for every $q$, there exist non-bipartite $q$-common graphs. They also showed that if a graph $H$ is $q$-common for all $q$, then $H$ must satisfy Sidorenko's conjecture.

Just as the Tur\'an coloring yields a natural upper bound on the two-color Ramsey multiplicity constant for a graph, there is an analogous construction that upper-bounds $c_q(H)$ for $q \geq 3$. Let $r_{q-1}(k)$ denote the $(q-1)$-color Ramsey number of $K_k$, namely the least $r$ so that any $(q-1)$-coloring of $E(K_r)$ contains a monochromatic copy of $K_k$. Then we define the \emph{$q$-color Ramsey-blowup coloring} with parameter $k$ to be the $q$-coloring of $E(K_n)$ where we partition $V(K_n)$ into $r_{q-1}(k)-1$ equally-sized parts, color the edges between the parts by blowing up a $(q-1)$-coloring of $E(K_{r_{q-1}(k)-1})$ without a monochromatic $K_k$, and color all edges within the parts with the $q$th, unused, color. If $H$ is a graph with clique number $k$, then there can be no monochromatic copy of $H$ in the first $q-1$ colors\footnote{Note that in contrast to the two-color case, here we require $H$ to have clique number $k$, rather than chromatic number $k$. The reason is that in a general coloring of $K_{r_{q-1}(k)-1}$ with no monochromatic $K_k$, there are monochromatic subgraphs with chromatic number larger than $k$, and thus it is only through the clique number of $H$ that we can ensure that there are no monochromatic copies of $H$ in the first $q-1$ colors.}. Moreover, if $H$ is connected, then every copy of $H$ in color $q$ must lie in one of the $r_{q-1}(k)-1$ parts. Therefore, if $H$ is a $t$-vertex connected graph with clique number $k$, the Ramsey-blowup coloring implies that $m_q(H,n) \leq (r_{q-1}(k)-1)^{1-t} (n)_t$ and thus that $c_q(H) \leq (r_{q-1}(k)-1)^{1-t}$. We remark that the Ramsey-blowup coloring exactly generalizes the Tur\'an coloring, because the one-color Ramsey number of $K_k$ is equal to $k$. Similarly, the following definition generalizes the notion of a bonbon.

\begin{Def}
	Let $q,k \geq 2$ be integers. We say that a $q$-coloring of $E(K_n)$ is \emph{Ramsey-blowup-like} if there is a color so that the graph of edges in that color is a disjoint union of $r_{q-1}(k)-1$ cliques, each of size $\flo{\frac{n}{r_{q-1}(k)-1}}$ or $\cei{\frac{n}{r_{q-1}(k)-1}}$, and such that there is no monochromatic $K_k$ in any of the other colors.

	Let $H$ be a $t$-vertex connected graph with clique number $k$. We say that $H$ is a \emph{$q$-color bonbon} if, for all sufficiently large $n$, any $q$-coloring of $E(K_n)$ with the minimum number of monochromatic copies of $H$ is Ramsey-blowup-like. In particular, if $H$ is a $q$-color bonbon, then $c_q(H) = (r_{q-1}(k)-1)^{1-t}$.
\end{Def}
Note that in the case $q=2$, this definition of a 2-color bonbon coincides with that of a bonbon from \cref{def:bonbon}. This definition might  not appear at first to be the natural extension to more than two colors, as we do not fully specify what happens between the parts.
In \cref{sec:weaker-bonbon-def}, we discuss why this definition seems to be the right one for this problem.

Our second main theorem is that, conditional on a natural but unproven assumption about Ramsey numbers, a graph obtained from a large clique by adding sufficiently many pendant edges is a three-color bonbon. To state this assumption, we make the following definition, where $r(a,b)$ denotes the off-diagonal two-color Ramsey number, namely the least $r$ so that any red/blue coloring of $E(K_r)$ contains a monochromatic red $K_a$ or a monochromatic blue $K_b$.
\begin{Def}\label{def:polite}
	We say that a positive integer $k$ is \emph{polite} if 
	\begin{equation}\label{eq:polite-def-1}
		r(k,\cei{k/2}) \leq 2^{-31} r(k,k)
	\end{equation}
	and
	\begin{equation}\label{eq:polite-def-2}
		\frac{r(k,k)-1}{r(k,k-1)} \geq 1+ 25 \left(\frac{r(k,\cei{k/2})}{r(k,k)}\right)^{1/4} .
	\end{equation}
\end{Def}

\begin{conj}\label{conj:assumptions}
	Every sufficiently large integer is polite.
\end{conj}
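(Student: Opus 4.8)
\emph{Proof proposal.} \cref{conj:assumptions} is a statement purely about the classical two-color Ramsey numbers $r(a,b)$, and it demands quantitative comparisons between diagonal and off-diagonal Ramsey numbers that lie well beyond current technology; so my plan is not to prove it outright but to \emph{reduce} it to two clean, widely-believed statements and then to discuss their status. The two inputs I would use are \textbf{(A)} $r(k,\cei{k/2})/r(k,k) \to 0$ as $k \to \infty$, and \textbf{(B)} $\liminf_{k\to\infty} r(k,k)/r(k,k-1) > 1$. Granting these, the reduction is immediate: (A) gives \eqref{eq:polite-def-1} for all large $k$, since $2^{-31}$ is a fixed constant; and under (A) the quantity $\bigl(r(k,\cei{k/2})/r(k,k)\bigr)^{1/4}$ tends to $0$, so the right-hand side of \eqref{eq:polite-def-2} tends to $1$, while, if $\delta>0$ is below $\liminf_k r(k,k)/r(k,k-1)-1$, then
\[
	\frac{r(k,k)-1}{r(k,k-1)} \;\geq\; \frac{r(k,k)}{r(k,k-1)} - \frac{1}{r(k,k-1)} \;\geq\; 1+\delta - o(1),
\]
which exceeds the right-hand side of \eqref{eq:polite-def-2} for all large $k$. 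Hence (A) and (B) together imply that every sufficiently large integer is polite.

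The second step is to examine (A) and (B) themselves. Both are instances of the standard conjectural picture in which the ``Ramsey growth rate'' $\alpha \mapsto \lim_k r(k,\alpha k)^{1/k}$ exists and is strictly increasing on $[0,1]$ (which would itself follow from the random coloring being asymptotically optimal): (A) asserts that the value at $\alpha=\tfrac12$ is strictly below the value at $\alpha=1$, and (B) is a weak multiplicative form of strict monotonicity near $\alpha=1$. For (A), the natural line of attack is to push the Erd\H os--Szekeres bound $r(k,\cei{k/2}) \le \binom{\cei{3k/2}-2}{k-1}$ below the probabilistic lower bound $r(k,k) \ge (1-o(1))\frac{\sqrt2\,k}{e}2^{k/2}$; I note that for any \emph{fixed} $\delta$ smaller than roughly $0.1$ this already works verbatim, since the exponential base $(1+\delta)^{1+\delta}/\delta^\delta$ of $\binom{(1+\delta)k}{\delta k}$ is then below $\sqrt2$, so the entire difficulty sits in the constant $\tfrac12$. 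For (B), one can establish the \emph{additive} separation $r(k,k) \ge r(k,k-1)+(k-1)$ --- take an extremal $(k,k-1)$-coloring and adjoin a set of $k-1$ new vertices spanning a red clique and joined to everything else in blue, which creates no red $K_k$ and no blue $K_k$ --- and the content of (B) is exactly to upgrade this additive gap to a multiplicative one.

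The main obstacle is that (A) with $\alpha=\tfrac12$ and (B) are both genuinely open, essentially for the same reason: one cannot presently separate the exponential growth rate of $r(k,k)$ from that of $r(k,\cei{k/2})$. The best known upper bound on $r(k,\cei{k/2})$ is exponentially larger than the best known lower bound on $r(k,k)$, so (A) cannot be certified by plugging in known estimates; likewise the known lower bounds for $r(k,k)$ and upper bounds for $r(k,k-1)$ are exponentially far apart, so they yield only an additive --- not multiplicative --- separation, leaving (B) open. Since even the existence of $\lim_k r(k,k)^{1/k}$ is a famous open problem, I would not expect an unconditional proof of \cref{conj:assumptions}; the realistic deliverable is the conditional reduction above, together with the observation that (A), (B), and hence \cref{conj:assumptions} all follow from the standard conjectural asymptotics for Ramsey numbers.
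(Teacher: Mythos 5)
The statement is labeled as a conjecture in the paper, and the paper does not prove it; instead it offers a short remark reducing it to two unproved statements---that $r(k,\lceil k/2\rceil) = o(r(k,k))$, and the Burr--Erd\H{o}s conjecture $r(k,k) \geq (1+c)\,r(k,k-1)$---which are precisely your (A) and (B). Your conditional reduction and assessment of the obstacles therefore match the paper's treatment, and your supplementary observations (the explicit construction giving $r(k,k) \geq r(k,k-1)+(k-1)$, and the Erd\H{o}s--Szekeres comparison for off-diagonal Ramsey numbers) are correct.
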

\begin{rem}
	Erd\H os \cite{MR655605} wrote in 1981 that ``Almost nothing is known about the local growth properties of $r(n,m)$'', and not much has changed about our knowledge in the intervening 40 years. Indeed, \cref{conj:assumptions} would follow from very natural conjectures about the growth of Ramsey numbers which have seen no progress in decades.

	First, since it is known that both $r(k,k)$ and $r(k,\cei{k/2})$ grow exponentially, and since it is natural to expect the latter to grow at a lower exponential rate than the former, it is certainly natural to expect that $r(k,\cei{k/2})= o(r(k,k))$, which would imply (\ref{eq:polite-def-1}) for sufficiently large $k$. Additionally, Burr and Erd\H os (see \cite{MR655605}) conjectured that $r(k,k) \geq (1+c)r(k,k-1)$ for some absolute constant $c>0$, which would imply (\ref{eq:polite-def-2}) in conjunction with $r(k,\cei{k/2})= o(r(k,k))$. However, \cref{conj:assumptions} seems currently out of reach.
\end{rem}

With this definition and conjecture, we can now state our main three-color theorem.
\begin{thm}\label{thm:3-color}
	Let $k$ be a sufficiently large polite integer, and let $t$ be sufficiently large with respect to $k$. A graph $H$ obtained from $K_k$ by adding $t-k$ pendant edges is a 3-color bonbon.
\end{thm}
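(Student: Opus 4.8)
The plan is to follow the strategy behind \cref{thm:main}, specialised to $H_0=K_k$, and to insert the two new combinatorial inputs needed in three colours: a precise analysis of the Ramsey-blowup coloring, and the two places where the politeness of $k$ is used. Throughout, $k$ is a large polite integer, $t$ is enormous relative to $k$, and $n\to\infty$; write $a_1,\dots,a_k\ge 0$ for the number of pendant edges of $H$ attached to the $i$-th vertex of $K_k\subseteq H$, so $\sum_i a_i=t-k$. In any $3$-coloring $\chi$ of $E(K_n)$, every monochromatic copy of $H$ is a monochromatic copy of $K_k$ together with its pendant vertices, all in the same colour, and since $t$ is huge the number $M(\chi)$ of such copies satisfies
\[
  M(\chi)=(1+o(1))\sum_{c=1}^{3}\ \sum_{\substack{v_1,\dots,v_k\\ \text{mono-}c\ K_k}}\ \prod_{i=1}^{k}\bigl(d_c(v_i)\bigr)_{a_i},
\]
where $d_c(v)$ is the colour-$c$ degree of $v$: the error absorbs collisions among pendant vertices, and (as in \cref{thm:main}) the contribution of monochromatic $K_k$'s all of whose relevant vertices have colour-degree below a suitable polynomial in $t$ is negligible against the Ramsey-blowup bound. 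So it suffices to study the minimisers of this weighted count of monochromatic $K_k$'s subject to $d_1(v)+d_2(v)+d_3(v)=n-1$ for every $v$.

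\textbf{Step 2 (coarse structure).} The $3$-color Ramsey-blowup coloring with parameter $k$ gives $M(\chi)\le (r(k,k)-1)^{1-t}(n)_t$, so a minimiser achieves at most this. Because the exponent $t-k$ in $\prod_i(d_c(v_i))_{a_i}$ is vast, the count is extraordinarily sensitive to the colour-$c$ degrees on monochromatic $K_k$'s: a colour carrying $\Theta(n^k)$ monochromatic $K_k$'s must have colour-degree at most $(1+o(1))n/(r(k,k)-1)$ on essentially all of them, and then $\sum_c d_c(v)=n-1$ forces the other two colours to occupy essentially the complementary edges at each such vertex. Feeding this into the cleaning/stability argument of \cref{thm:main}, together with convexity to equalise part sizes, shows that an optimal $\chi$ has (after relabelling colours) a colour-$3$ graph close to a disjoint union of $p$ near-equal cliques $V_1,\dots,V_p$, with colours $1$ and $2$ between the $V_i$ forming a blow-up of a two-colouring of $K_p$ and only a negligible fraction of colours $1,2$ inside the $V_i$; by Ramsey's theorem $p\le r(k,k)-1$. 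It remains to rule out competing near-optimal structures and to promote $p\le r(k,k)-1$ to equality and the approximate description to an exact one.

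\textbf{Step 3 (politeness kills the competitors --- the main obstacle).} Unlike the two-colour case, where the Turán coloring has essentially no competitors, here two families of structures threaten to beat or tie the honest blow-up, and each is killed by one of the polite inequalities. First, \emph{iterated blow-ups}: partition into $m$ parts, two-colour $K_m$ with colours $1,2$ so that colour $2$ has clique number $\beta$, and inside each part build colour $3$ as a $\beta$-fold union of cliques using colour $2$ again at the smaller scale; keeping colour $2$ globally $K_k$-free forces the product of the two colour-$2$ clique numbers to be $\le k-1$, hence $m\le r(k,\cei{k/\beta})-1$, and a short degree computation shows the construction realises $c_3(H)\le(\beta m)^{1-t}$. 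So such a structure beats the honest blow-up only if $\beta\, r(k,\cei{k/\beta})>r(k,k)$ for some $\beta\ge 2$; inequality \eqref{eq:polite-def-1} handles $\beta=2$ with room to spare, and crude upper bounds on $r(k,\cei{k/\beta})$ for larger $\beta$ handle the rest, so no iterated blow-up wins. Second, \emph{perturbed blow-ups}: one tries to gain an extra effective part by deleting a few between-part edges and recolouring them with colour $3$ (shrinking two parts), or by recolouring a few within-part edges to colours $1,2$, trading the small new monochromatic count against the factor gained from $p\mapsto p+1$; optimising any such perturbation turns out to require a two-colouring of $K_{r(k,k)-1}$ with no monochromatic $K_k$ in which one colour is (essentially) $K_{k-1}$-free, which would force $r(k,k)\le r(k,k-1)$, contradicting \eqref{eq:polite-def-2}. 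The quantitative form of \eqref{eq:polite-def-2} --- the slack $25\,(r(k,\cei{k/2})/r(k,k))^{1/4}$, itself controlled by \eqref{eq:polite-def-1} --- is precisely what the stability (approximate) version of this argument consumes. Combining Steps 2 and 3 pins down $p=r(k,k)-1$, forces colours $1,2$ to be genuinely $K_k$-free and colour $3$ to be an exact disjoint union of $r(k,k)-1$ near-equal cliques, which is exactly the assertion that the minimisers are Ramsey-blowup-like, so $H$ is a $3$-colour bonbon.

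I expect Step 3 to be by far the hardest part: Steps 1 and 2 essentially re-run the $q=2$ argument of \cref{thm:main}, but Step 3 has no two-colour analogue, and the delicate point is to set up the optimisation over two-scale and perturbed colorings precisely enough that the only facts about Ramsey numbers it uses are \eqref{eq:polite-def-1} and \eqref{eq:polite-def-2} --- which is exactly why \cref{thm:3-color} must be stated conditionally on \cref{conj:assumptions}.
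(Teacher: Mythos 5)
Your Steps 1--2 are broadly consistent with how the paper proceeds (local counting via \cref{lem:3-color-degree-regular}, the analogue of \cref{lem:few-H0-in-high-deg}, and eventually \cref{lem:stability} applied to the union of the two non-clique colors), but the proposal has a genuine gap exactly where the three-color case departs from \cref{thm:main}, and your Step 3 does not fill it. The first missing idea is that ``feeding this into the cleaning/stability argument of \cref{thm:main}'' does not yield the coarse structure you assert: in three colors one must first rule out an arbitrary coloring in which two \emph{different} pairs of colors each have linearly many vertices of high degree in both colors of the pair (in the paper's notation, that two of $V_{RB},V_{RY},V_{BY}$ are simultaneously large). This has no two-color analogue and is precisely where \eqref{eq:polite-def-1} is used in the paper (\cref{claim:VRB-or-VRY-small}): the error parameter $\eta$ must be taken of order $r(k,\cei{k/2})/r(k,k)$, and if both sets have size $\ge\eta n$ one glues red copies of $K_{k/2}$ in one set to red copies of $K_{k/2}$ in the other to manufacture $\Omega(n^k)$ red $K_k$'s, contradicting the analogue of \cref{lem:few-H0-in-high-deg}. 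Your use of \eqref{eq:polite-def-1} to exclude ``iterated blow-ups'' only disposes of one parametrized family of competitor \emph{constructions}; the theorem requires handling every minimizing coloring, and a family-by-family competitor analysis is not a proof. The same criticism applies to your use of \eqref{eq:polite-def-2} against ``perturbed blow-ups'': in the paper that inequality enters a local degree argument (\cref{claim:no-r-or-b-sets}) showing the exceptional classes $V_R,V_B,V_Y,V_{RY},V_{BY}$ are actually \emph{empty} --- a vertex missing one of the two non-clique colors has a red neighborhood of size $\ge(1-O(\eta^{1/4}))n$ whose yellow density falls below $1/r(k,k-1)$, producing too many red $K_{k-1}$'s through it. This emptiness step is indispensable because $\eta$ is necessarily much larger than $1/r(k,k)$, so ``small'' exceptional sets would wreck the minimum-degree hypothesis needed for \cref{lem:stability}; your outline skips this entirely.

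A secondary point: in Step 2 you claim the minimizer has colors $1,2$ between the parts forming (approximately) a blow-up of a two-coloring of $K_p$. As an exact statement this is stronger than what is true --- the paper deliberately defines ``Ramsey-blowup-like'' to require only that the other two colors contain no monochromatic $K_k$, precisely because mixed blowup colorings (\cref{conj:mixed}) are expected to exist and would also be extremal. Your final conclusion is stated in the correct weaker form, but the intermediate claim should be weakened accordingly, and the endgame (each part exactly a yellow clique, no yellow edges between parts, no red or blue $K_k$, Jensen for equal part sizes) still needs the careful recoloring arguments of the paper's final claims rather than the convexity remark alone.
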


Prior to this result, there was only one non-$3$-common graph whose three-color Ramsey multiplicity constant was known exactly, namely the triangle $K_3$. In \cite{MR3071377}, it is proved that $c_3(K_3)= \frac{1}{25}$, and moreover, there is a complete characterization of the extremal colorings. Among them is the Ramsey-blowup coloring, but there are also other extremal colorings which are not Ramsey-blowup-like. Because of this, $K_3$ is not a three-color bonbon. Assuming \cref{conj:assumptions}, \cref{thm:3-color} yields an infinite family of three-color bonbons, and in particular yields an infinite family of new examples of  graphs which are not $3$-common and whose three-color Ramsey multiplicity constant is known exactly.

We remark that much of the proof of \cref{thm:3-color} mimics that of \cref{thm:main}, but several additional complications arise in the three-color setting, and additional ideas are needed to overcome them. Moreover, these complications seem to be inherent to the problem. Indeed, as discussed in \cref{sec:more-colors}, there are major obstructions to extending our proofs to four or more colors, and in fact, there is some reason to believe that no such result is true when the number of colors is at least five: perhaps there do not exist \emph{any} $q$-color bonbons for $q \geq 5$.

\subsection{Ramsey multiplicity upon edge deletion}\label{subsec:edge-deletion}
As an application of \cref{thm:main}, we are able to resolve a question of Huang about how edge-deletion affects the Ramsey multiplicity constant. Namely, recall that the Burr--Rosta conjecture asserts that $c(H) = 2^{1-m}$ for any graph $H$ with $m$ edges. If the Burr--Rosta conjecture were true, it would imply that if $H'$ is obtained from $H$ by deleting a single edge, then $c(H') = 2c(H)$.

The Burr--Rosta conjecture is false, but it is natural to wonder whether a weakening of the equality $c(H')=2c(H)$ is nonetheless true. Note that the Ramsey multiplicity constant is a monotone parameter, so certainly $c(H') \geq c(H)$ if $H'$ is obtained from $H$ by deleting an edge. Huang (private communication) asked whether there is an absolute constant $C>0$ so that $c(H') \leq C \cdot c(H)$ for all graphs $H$ and all subgraphs $H'$ obtained by deleting an edge. \cref{thm:main} implies that this is false, in a strong form. Namely, let $H$ be obtained from $H_0=K_5$ by adding $t-5$ pendant edges. If $t$ is sufficiently large, then \cref{thm:main} implies that $c(H) = 4^{1-t}$. Now, let $H'$ be obtained from $H$ by deleting one of the edges of the $K_5$. Note that $K_5 \setminus e$ is a $4$-critical graph, so we may again apply \cref{thm:main} to conclude that $c(H') = 3^{1-t}$ if $t$ is sufficiently large. This shows that $c(H')/c(H)$ cannot be upper-bounded by a constant, and in fact may be exponentially large in the number of vertices of $H$. We remark that similar questions about the ordinary Ramsey number, rather than the Ramsey multiplicity constant, have been recently studied in \cite{2208.11181}.

\subsection{Outline and notation}

The rest of this paper is organized as follows. In \cref{sec:general-lemmas}, we prove some general lemmas about Ramsey multiplicity that we will need. 
In \cref{sec:proof-main}, we present the proof of \cref{thm:main}, which is organized as a series of claims which repeatedly refine the structure of a two-coloring minimizing the number of monochromatic copies of $H$, to eventually conclude that the coloring is the Tur\'an coloring. In \cref{sec:three-colors}, we prove \cref{thm:3-color}; the proof bears many similarities to that of \cref{thm:main}, so we shorten or omit several of the proofs, choosing to focus on the places where new ideas are needed to handle the added complexity of the three-color case. Finally, we end with some concluding remarks: \cref{sec:weaker-bonbon-def} discusses why we define $q$-color bonbons for $q \geq 3$ as we do, \cref{sec:more-colors} discusses why our techniques seem to fail for $q>3$ (and why $q$-color bonbons may not even exist for large $q$), and \cref{sec:open-problems} lists some further open problems that arise from this work.

In the two-color case, we use the colors red and blue, which we denote by $R$ and $B$. In the three-color case, we add the color yellow, denoted by $Y$. For clarity of presentation, we systematically omit floor and ceiling signs whenever they are not crucial.

\section{General lemmas about Ramsey multiplicity}\label{sec:general-lemmas}
In this section we collect four general results which we use in our proofs.
Our first lemma says that in a coloring minimizing the number of monochromatic copies of some graph $H$, all vertices must lie in roughly the same number of monochromatic copies. For a graph $H$, a two-coloring $\chi$ of $E(K_n)$, and a vertex $v \in V(K_n)$, let us denote by $m_v(H,\chi)$ the number of monochromatic labeled copies of $H$ that contain $v$.

\begin{lem}\label{lem:degree-regular}
	For every graph $H$ on $t$ vertices, there exists a constant $C>0$ such that the following holds. For any two-coloring $\chi$ of $E(K_n)$ with the minimum number of monochromatic labeled copies of $H$, and for any $v \in V(K_n)$,
	\[
		\left( 1- \frac Cn \right) \frac tn m(H,n) \leq m_v(H,\chi)\leq \left( 1+\frac Cn \right) \frac tn m(H,n).
	\]
\end{lem}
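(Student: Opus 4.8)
The plan is a standard symmetrization (``cloning'') argument. Fix a coloring $\chi$ of $E(K_n)$ with $m(H,\chi)=m(H,n)$. Since every labeled copy of $H$ has exactly $t$ vertices, $\sum_{v\in V(K_n)}m_v(H,\chi)=t\,m(H,n)$, so the average of $m_v(H,\chi)$ over $v$ is exactly $\tfrac tn m(H,n)$. Hence it suffices to prove that any two vertices $u,v$ satisfy $\ab{m_u(H,\chi)-m_v(H,\chi)}\le C'\,m(H,n)/n^2$ for some $C'=C'(H)$: this forces every $m_v(H,\chi)$ to lie within $C'\,m(H,n)/n^2$ of the average $\tfrac tn m(H,n)$, which is exactly the asserted bound with $C=C'/t$.

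To bound the difference, fix $u,v$ and define a new coloring $\chi'$ of $E(K_n)$ by recoloring every edge at $u$ other than $uv$ to match the corresponding edge at $v$: that is, $\chi'(uw)=\chi(vw)$ for all $w\notin\{u,v\}$, and $\chi'$ agrees with $\chi$ on all other edges (in particular on $uv$). I would then classify the monochromatic copies of $H$ by whether they use neither, exactly one, or both of $u,v$. Copies avoiding $u$ are unaffected by the recoloring. A copy of $H$ through $u$ and avoiding $v$ is monochromatic in $\chi'$ if and only if the copy obtained by replacing $u$ with $v$ is monochromatic in $\chi$ --- because $u$ has the same pattern of edge-colors to $V(K_n)\setminus\{u,v\}$ in $\chi'$ that $v$ has in $\chi$ --- so these are in bijection with the monochromatic copies of $H$ through $v$ and avoiding $u$ in $\chi$. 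Writing $a,b,c$ for the numbers of monochromatic copies of $H$ in $\chi$ through $u$ only, through $v$ only, and through both, and $c'$ for the number through both in $\chi'$, this bookkeeping gives $m(H,\chi')=m(H,\chi)+(b-a)+(c'-c)$. Extremality, $m(H,\chi')\ge m(H,\chi)$, yields $a-b\le c'-c\le c'$; running the same argument with the roles of $u$ and $v$ swapped gives $b-a\le c''$ for the analogous quantity $c''$; and since $m_u(H,\chi)-m_v(H,\chi)=a-b$, we conclude $\ab{m_u(H,\chi)-m_v(H,\chi)}\le\max(c',c'')$.

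It remains to see that $c'$ and $c''$ are genuinely lower-order. Each is at most the total number of labeled copies of $H$ in $K_n$ (monochromatic or not) passing through two prescribed vertices, namely $t(t-1)(n-2)_{t-2}=O_H\!\big((n)_t/n^2\big)$. By Ramsey's theorem together with the fact that $c(H,n)=m(H,n)/(n)_t$ is non-decreasing, there is a constant $\delta=\delta(H)>0$ with $m(H,n)\ge\delta\,(n)_t$ whenever $n$ is at least the Ramsey number of $H$; hence $c',c''=O_H\!\big(m(H,n)/n^2\big)$ for all such $n$, which is precisely the bound needed. For the finitely many smaller values of $n$ we have $m(H,n)=0$, so $m_v(H,\chi)=0$ for every $v$ and the statement holds trivially (for any $C$).

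The one genuinely delicate point is the case analysis in the cloning step: in particular, one must check that the ``through both $u$ and $v$'' counts $c$ and $c'$ --- which do \emph{not} transform into each other cleanly, since in $\chi'$ the edge-colors from $u$ and from $v$ to a common third vertex are forced to be equal --- can safely be absorbed into an $O_H(n^{-2}m(H,n))$ error term rather than being tracked exactly.
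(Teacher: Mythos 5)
Your proof is correct and uses the same core idea as the paper: a cloning/symmetrization argument that replaces the edge-pattern of one vertex with that of another and invokes extremality of $\chi$, with the ``through both'' terms absorbed into a lower-order $O_H(m(H,n)/n^2)$ error via the lower bound $m(H,n)\ge\delta(n)_t$ coming from Ramsey's theorem and monotonicity of $c(H,n)$. The only cosmetic difference is that the paper applies the swap only to the pair achieving the minimum and maximum of $m_v$, whereas you run it for every pair $(u,v)$; both routes yield the same conclusion.
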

\begin{proof}
	By picking the constant $C$ appropriately, we may assume that $n$ is sufficiently large in terms of $H$.
	For simplicity, we abbreviate $m_v(H,\chi)$ as simply $m_v$. Let $u$ and $w$ be two vertices of $K_n$ such that $m_u$ is minimal and $m_w$ is maximal among all vertices of $K_n$. We have that
	\[
		t \cdot m(H,n) = \sum_{v \in V(K_n)} m_v,
	\]
	which implies that $m_u \leq \frac{t}{n} m(H,n) \leq m_w$. 

	We claim that for some constant $C>0$ depending only on $H$, we have that $m_u \geq (1-C/n) m_w$. This immediately implies the desired result (up to changing the constant $C$), since the fact that the maximum and minimum values of $m_v$ differ by at most a factor of $1-C/n$ implies that every value of $m_v$ is within a factor of $1\pm 2C/n$ of the average value. 

	So suppose for contradiction that $m_u < (1-C/n) m_w$. Consider the coloring $\chi'$ obtained by deleting $w$ and replacing it with a clone $u'$ of $u$, namely setting $\chi'(u',v) = \chi(u,v)$ for any $v \notin \{u,u'\}$. We also color the edge $uu'$ red. Then we claim that $\chi'$ has strictly fewer monochromatic labeled copies of $H$ than $\chi$ does, contradicting our assumption on $\chi$. Indeed, in deleting $w$, we destroy $m_w$ monochromatic labeled copies of $H$, and when making the clone $u'$, we add $m_u$ monochromatic labeled copies of $H$ containing $u'$ but not $u$. Additionally, there are at most $(n-2)^{t-2}$ monochromatic labeled copies of $H$ containing both $u$ and $u'$. In all, we find that
	\[
		m(H,\chi') \leq m(H,\chi) - m_w + m_u + ({n-2})^{t-2}< m(H,\chi) + n^{t-2} - \frac{Cm_w}{n}.
	\]
	Since $n$ is sufficiently large, we may assume that $m(H,n) \geq \frac{c(H)}{2}n^t$. Therefore,
	\[
		m_w \geq \frac tn m(H,n) \geq \frac tn \frac{c(H)}{2}n^t = \frac{tc(H)}{2} n^{t-1}.
	\]
	Thus, if we let $C = \frac{2}{tc(H)}$ be a constant depending only on $H$, then we see that
	\[
		\frac{Cm_w}{n} \geq n^{t-2}
	\]
	implying that $m(H,\chi')<m(H,\chi)$, as claimed. Thus, we have our contradiction, and find that $m_u \geq (1-C/n) m_w$. 
\end{proof}

We will need the following lemma (which essentially appears in \cite{MR151956}), a simple and well-known lower bound on the Ramsey multiplicity constants of cliques. We remark that a better lower bound is known \cite{MR2927637} for $c(K_k)$ for sufficiently large $k$, but we stick with the following since it applies for all $k$.
\begin{lem}\label{lem:clique-multiplicity}
	Let $H_1,H_2$ be graphs on $h_1,h_2$ vertices, respectively, and suppose that $h_1,h_2 \leq h$. For any $n \geq 4^{h}$, any two-coloring of $E(K_n)$ contains at least $4^{-h^2}(n)_{h_1}$ monochromatic red labeled copies of $H_1$, or at least $4^{-h^2}(n)_{h_2}$ monochromatic blue labeled copies of $H_2$.
\end{lem}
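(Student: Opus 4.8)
The plan is to use a supersaturation argument based on Ramsey's theorem. First I would invoke the ordinary Ramsey number $r(H_1, H_2)$: since $h_1, h_2 \leq h$, we have $r(H_1,H_2) \leq r(K_h,K_h) \leq 4^h$ (using the standard bound $r(K_h,K_h) \leq \binom{2h-2}{h-1} \leq 4^{h-1} < 4^h$). Thus for $n \geq 4^h$, every two-coloring of $E(K_n)$ has the property that every subset of $4^h$ vertices induces either a red labeled copy of $H_1$ or a blue labeled copy of $H_2$.

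Next I would run the standard averaging/double-counting step. Fix a two-coloring $\chi$ of $E(K_n)$. For each of the $\binom{n}{4^h}$ subsets $S$ of size $4^h$, $\chi$ restricted to $S$ contains a monochromatic red $H_1$ or a monochromatic blue $H_2$. Hence at least half of these subsets contain a red labeled copy of $H_1$, or at least half contain a blue labeled copy of $H_2$; assume without loss of generality the former. Each fixed labeled copy of $H_1$ (on $h_1$ vertices) is contained in exactly $\binom{n-h_1}{4^h - h_1}$ subsets of size $4^h$. Therefore the number of red labeled copies of $H_1$ is at least
\[
	\frac{\tfrac12 \binom{n}{4^h}}{\binom{n-h_1}{4^h-h_1}} = \frac12 \cdot \frac{(n)_{h_1}}{(4^h)_{h_1}} \geq \frac12 (4^h)^{-h_1} (n)_{h_1} \geq \frac12 \cdot 4^{-h \cdot h}(n)_{h_1} \geq 4^{-h^2}(n)_{h_1},
\]
using $h_1 \leq h$ and absorbing the factor of $\tfrac12$ into the (non-tight) exponent $4^{-h^2}$ versus $\tfrac12 4^{-h^2}$. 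The symmetric computation in the blue case gives at least $4^{-h^2}(n)_{h_2}$ blue labeled copies of $H_2$.

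I do not expect a serious obstacle here; this is the classical Erd\H{o}s supersaturation argument and the constants are deliberately wasteful, so the only care needed is bookkeeping to confirm that the crude bounds ($r(K_h,K_h) \leq 4^h$, and $(4^h)_{h_1} \leq 4^{h h_1} \leq 4^{h^2}$, together with the factor $\tfrac12$) all comfortably fit inside the stated constant $4^{-h^2}$. If one wanted to be fully careful one could instead count copies of $H_i$ only using $h_i$-element subsets directly (every $h_1$-subset that induces a red $H_1$-subgraph contributes, but this requires $H_1$ to actually be a subgraph, not just that $r$ forces it), so the cleanest route really is to pass through sets of size $4^h$ as above, where Ramsey's theorem guarantees an honest monochromatic copy.
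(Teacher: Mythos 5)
Your approach is the same as the paper's: apply Ramsey's theorem to subsets of a fixed size, then double-count. However, there is a small but genuine error in the constant-tracking at the final step. You count using subsets of size $4^h$, and the chain ends with
\[
\frac12 (4^h)^{-h_1}(n)_{h_1} = \tfrac12 \cdot 4^{-h h_1}(n)_{h_1} \overset{?}{\ge} 4^{-h^2}(n)_{h_1},
\]
which requires $4^{h^2-hh_1}\ge 2$, i.e.\ $h(h-h_1)\ge 1$. This fails precisely when $h_1=h$ --- and this is the tight case that actually occurs in the paper's applications (e.g.\ \cref{lem:V0-small} takes $H_1=H_2=H_0$ on $h$ vertices). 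So as written, the factor of $\tfrac12$ cannot be ``absorbed'' and you only prove the bound $\tfrac12\cdot 4^{-h^2}(n)_{h_1}$, not the stated $4^{-h^2}(n)_{h_1}$.

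The fix is exactly the slack you already derived but didn't use: you noted the Erd\H{o}s--Szekeres bound $r(K_{h_1},K_{h_2})\le\binom{h_1+h_2-2}{h_1-1}\le 4^{h-1}$, but then counted over $4^h$-subsets rather than $r$-subsets (or $4^{h-1}$-subsets). The paper double-counts over subsets of size $r=r(h_1,h_2)\le 4^{h-1}$, giving a final constant $\tfrac12 r^{-h_1}\ge \tfrac12\cdot 4^{-(h-1)h}= \tfrac12\cdot 4^h\cdot 4^{-h^2}\ge 4^{-h^2}$, with room to spare. If you replace $4^h$ by $4^{h-1}$ throughout your double-count, your argument goes through verbatim (the hypothesis $n\ge 4^h\ge 4^{h-1}$ is still sufficient). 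One other minor remark: the paper double-counts unlabeled monochromatic $K_{h_1}$'s and multiplies by $h_1!$ at the end, whereas you double-count labeled copies of $H_1$ directly; your version silently loses a factor of $h_1!$ in the intermediate bound (since a subset with a red $K_{h_1}$ actually contains $h_1!$ labeled red $H_1$'s, not just one), but both versions then apply the same crude bound and this factor is not needed, so it makes no difference to the final answer once the subset size is corrected.
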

\begin{proof}
	Let $r=r({h_1},{h_2})$, and fix a two-coloring of $E(K_n)$ for $n \geq 4^h$. The Erd\H os--Szekeres \cite{MR1556929} bound implies that $r \leq \binom{h_1+h_2-2}{h_1-1}\leq 4^{h-1} \leq n$. By the definition of $r$, every $r$-subset of $V(K_n)$ contains a red $K_{h_1}$ or a blue $K_{h_2}$. Suppose first that at least half of these subsets contain a red $K_{h_1}$. We have that every red copy of $K_{h_1}$ appears in exactly $\binom{n-h_1}{r-h_1}$ subsets of size $r$. Therefore, by double-counting, the total number of red $K_{h_1}$ is at least
	\[
		\frac{\frac 12\binom nr}{\binom{n-h_1}{r-h_1}} = \frac{\binom n{h_1}}{2\binom r{h_1}} \geq \frac{r^{-h_1}}2 \binom n{h_1} \geq 4^{-h^2} \binom n{h_1},
	\]
	using the bounds $\binom r{h_1} \leq r^{h_1}$, $h_1 \leq h$ and $r \leq 4^{h-1}$. Since every red $K_{h_1}$ contains exactly $h_1!$ red labeled copies of $H_1$, we find at least $4^{-h^2}(n)_{h_1}$ red labeled copies of $H_1$ in this case. By interchanging the roles of $h_1$ and $h_2$ and blue and red, we get the other desired conclusion in case at least half of the $r$-subsets of $V(K_n)$ contain a blue $K_{h_2}$, which completes the proof.
\end{proof}

The next result we need is due to He and the authors \cite[Theorem 3.5]{2109.09205}, though a similar result was proved earlier by Bollob\'as and Nikiforov \cite[Theorem 9]{MR2370517}. It can be viewed as a combination of the stability and supersaturation versions of Tur\'an's theorem, and says that if a graph $G$ has minimum degree close to the Tur\'an threshold for containing a copy of $K_k$, while also containing few copies of $K_k$, then it must be close to $(k-1)$-partite. For more on this intuition and motivation, see \cite[Section 3]{2109.09205}.

\begin{lem}[{\cite[Theorem 3.5]{2109.09205}}]\label{lem:stability}
	For every $\varepsilon>0$ and every integer $k \geq 3$, there exist $\alpha,\delta>0$ such that the following holds for all $n$. Suppose $G$ is a graph on $n$ vertices with minimum degree at least $(1- \frac{1}{k-1}- \delta)n$ and with at most $\alpha \binom nk$ copies of $K_k$. Then $V(G)$ can be partitioned into $V_1 \sqcup \dotsb \sqcup V_{k-1}$, such that the total number of internal edges in $V_1,\dotsc,V_{k-1}$ is at most $\varepsilon \binom n2$. 

	Moreover, we may take $\delta= \min \{1/(2k^2),\varepsilon/2\}$ and $\alpha = k^{-10k} \varepsilon$.
\end{lem}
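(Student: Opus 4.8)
\emph{Overall plan.} The plan is to prove the lemma by a structural argument: show that a clever choice of a single copy of $K_{k-1}$ in $G$ already pins down an approximate $(k-1)$-partition, and then argue that few edges can lie inside its parts. For the set-up, iterating the minimum-degree hypothesis (each passage to a common neighbourhood loses at most $(\tfrac1{k-1}+\delta)n$ vertices, and $\delta\le\tfrac1{2k^2}$ keeps all the relevant counts bounded away from $0$) shows that $G$ contains at least $c_kn^{k-1}$ labelled copies of $K_{k-1}$, with $c_k\ge k^{-O(k)}$ explicit; meanwhile each $K_k$ contains exactly $k$ copies of $K_{k-1}$, so $\sum_{S\cong K_{k-1}}\lvert N(S)\rvert=k\cdot(\#K_k)\le k\alpha\binom nk$. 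Dividing, the average of $\lvert N(S)\rvert$ over the copies of $K_{k-1}$ is $O(\alpha n/(k!\,c_k))$, so there is a copy $S=\{v_1,\dots,v_{k-1}\}$ of $K_{k-1}$ whose common neighbourhood $N(S)$ has at most $\beta n$ vertices, for a small $\beta=O(\alpha/(k!\,c_k))$.

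\emph{The partition.} For such a clique, the non-neighbourhoods $\overline N(v_i):=V(G)\setminus(N(v_i)\cup\{v_i\})$ each have size at most $(\tfrac1{k-1}+\delta)n$, and their union misses only $N(S)\cup S$; by inclusion--exclusion their pairwise intersections have total size $O((k\delta+\beta)n)$ and each $\lvert\overline N(v_i)\rvert$ is within $O((k\delta+\beta)n)$ of $\tfrac n{k-1}$. I would put $V_i$ equal to the set of vertices lying in $\overline N(v_i)$ and in no other $\overline N(v_j)$, assign the exceptional set (vertices in two or more non-neighbourhoods, together with $N(S)\cup S$) to whichever part minimises their internal degree, then iteratively move any vertex to a part in which it has fewer neighbours and discard the residual misbehaving vertices into an exceptional set $Z$. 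The aim is a partition $V(G)=V_1\sqcup\dots\sqcup V_{k-1}$ with $\lvert Z\rvert$ small, all parts of size $\tfrac n{k-1}\pm o(n)$, and every $v\in V_i\setminus Z$ adjacent to at least $(\tfrac1{k-1}-\lambda)n$ vertices of each $V_j$ ($j\ne i$), for a small $\lambda$.

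\emph{Few internal edges.} It then remains to bound $\sum_i e(V_i)$. The contribution of edges meeting $Z$ is controlled by $\lvert Z\rvert$ and by the minimising placement of the $Z$-vertices, so the crux is the number of edges $uv$ with $u,v\in V_i\setminus Z$. For such an edge the common neighbourhood $W:=N(u)\cap N(v)$, once $V_i$ is deleted from it, is a set $W'$ of roughly $\tfrac{k-2}{k-1}n$ vertices (using that $u$ and $v$ each see at least $(\tfrac1{k-1}-\lambda)n$ of every other part) on which $G$ has minimum degree essentially at the $(k-2)$-Tur\'an threshold; hence $G[W']$ is itself ``close to $(k-2)$-partite, or rich in copies of $K_{k-1}$'', and either way $\{u,v\}$ extends to $\Omega_k(n^{k-2})$ copies of $K_k$ --- via $\Omega_k(n^{k-2})$ rainbow copies of $K_{k-2}$ inside a balanced near-$(k-2)$-partition of $W'$, or via $\Omega_k(n^{k-1})$ copies of $K_{k+1}$ through $\{u,v\}$ coming from $K_{k-1}$'s in $W'$. (This step is naturally organised as an induction on $k$, with the trivial case $k=2$ --- where the conclusion just says $e(G)\le\varepsilon\binom n2$ --- as the base.) Summing over internal edges against $\binom k2\cdot(\#K_k)\le\binom k2\alpha\binom nk$ shows there are only $\alpha\cdot k^{O(k)}\cdot n^2$ internal edges off $Z$, which together with the $Z$-contribution is at most $\varepsilon\binom n2$ once the parameters are chosen with $\varepsilon\gg\delta\gg\alpha\gg\beta,\lambda$.

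\emph{Main obstacle.} I expect the real difficulty to lie not in this skeleton but in making the two exceptional phenomena --- vertices lying in several of the $\overline N(v_i)$, and vertices whose degree is far from $(1-\tfrac1{k-1})n$, either of which may be adjacent to almost all of its assigned part --- both rare and cheaply absorbable, and in tracking all the error terms so that the explicit bounds $\delta=\min\{1/(2k^2),\varepsilon/2\}$ and $\alpha=k^{-10k}\varepsilon$ actually come out. Since $\delta$ is permitted to be nearly $\varepsilon/2$, the naive $O(k\delta n)$ slack in the partition is only marginally below the target $\varepsilon\binom n2$, so the clean-up and the propagation of constants into the inductive call must be handled with care; here one may also want the auxiliary fact, immediate from standard supersaturation (e.g.\ the Moon--Moser inequalities), that fewer than $\alpha\binom nk$ copies of $K_k$ force $e(G)$ to be within $O_k(\alpha)\binom n2$ of the Tur\'an number, which helps keep degree-irregular vertices under control.
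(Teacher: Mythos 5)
First, a point of reference: the paper does not prove this lemma at all --- it is imported verbatim from \cite[Theorem 3.5]{2109.09205} (with a precursor in Bollob\'as--Nikiforov \cite{MR2370517}) --- so your proposal can only be judged on its own merits. Your skeleton is the classical F\"uredi-style stability route and is the right start: the minimum degree gives $k^{-O(k)}n^{k-1}$ copies of $K_{k-1}$, averaging $\sum_S\lvert N(S)\rvert=k\cdot\#K_k$ over them produces a copy $S=\{v_1,\dots,v_{k-1}\}$ with $\lvert N(S)\rvert\le\beta n$, and the non-neighbourhoods $\overline N(v_i)$ then form an almost-partition into nearly balanced parts. This much is sound.

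The decisive step is where the gaps lie, and they are genuine rather than cosmetic. (1) The dichotomy ``$G[W']$ is close to $(k-2)$-partite or rich in $K_{k-1}$'s, and either way $uv$ extends to $\Omega_k(n^{k-2})$ copies of $K_k$'' fails in its first branch as stated: a graph with few internal edges in some $(k-2)$-partition can contain no $K_{k-2}$ at all (the partition may be degenerate, or the cross non-edges may concentrate between two of its parts and destroy every rainbow transversal), so the claimed rainbow copies of $K_{k-2}$ do not follow. Note also that if both branches really gave many $K_k$'s through $uv$, essentially the same reasoning applied to cross edges would contradict the hypothesis, so the argument must lean on exactly the quantitative difference you gloss over. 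The repair is that no dichotomy or induction is needed: for an internal edge $uv$ whose endpoints both have their non-neighbourhoods concentrated (up to $\lambda n$) in the common part $V_i$, the set $W=N(u)\cap N(v)$ has size at least $(1-\frac{1}{k-1}-\delta-2\lambda)n$, whence $G[W]$ has minimum degree at least $(1-\frac{1}{k-3}+\Omega(k^{-2}))\lvert W\rvert$, \emph{strictly above} the threshold forcing $K_{k-2}$; a greedy embedding then gives $k^{-O(k)}n^{k-2}$ copies of $K_{k-2}$ in $W$, hence that many $K_k$'s through $uv$, and double counting against $\alpha\binom nk$ finishes. (2) The control of the exceptional vertices --- those whose non-neighbourhood is spread over several parts, or whose degree is far above $(1-\frac{1}{k-1})n$ --- is deferred entirely, and with $\delta$ as large as $1/(2k^2)$ this is not bookkeeping: for a ``spread'' vertex to be forced to have neighbours in every part one needs the concentration threshold $\lambda$ to exceed roughly $k\delta\approx 1/(2k)$, while the minimum-degree computation in (1) needs $\lambda$ below roughly $1/(4k)$, so the naive choices collide and a further idea is required (this is presumably why the source's argument is organised differently). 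Until both points are settled, what you have is a plausible sketch of the standard approach rather than a proof, and the explicit constants in the ``moreover'' clause are nowhere near established.
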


Finally, we will need the supersaturation version of the Erd\H os--Stone--Simonovits theorem, due to Erd\H os and Simonovits \cite{MR726456}; the quantitative estimate we state follows from~\cite{MR609100} and a standard proof of the supersaturation theorem, e.g.\ \cite[Lemma 2.1]{MR2866732}.
\begin{lem}\label{lem:supersaturation}
	Let $H_0$ be an $h$-vertex graph of chromatic number $k$. For every $\delta>0$, there exists some $\gamma = \gamma(H_0,\delta)>0$ such that the following holds for sufficiently large $n$. If $G$ is an $n$-vertex graph with at least $(1- \frac{1}{k-1}+\delta)\binom n2$ edges, then $G$ contains at least $\gamma (n)_h$ labeled copies of $H_0$. Moreover, we may take $\gamma = \delta^{1000 h^2}$.
\end{lem}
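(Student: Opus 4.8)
The plan is to derive the statement from the quantitative Erdős–Stone bound of Bollobás–Erdős (or the version in the cited reference \cite{MR609100}), combined with the standard averaging (``sampling'') argument that converts a Turán-type edge count into a copy count. First I would recall the threshold version: if $G$ has $n$ vertices and at least $(1 - \frac{1}{k-1} + \delta)\binom n2$ edges, then $G$ contains a copy of $K_k(s)$, the complete $k$-partite graph with $s$ vertices in each part, provided $s \le c \log n / \log(1/\delta)$ or some such explicit bound; the precise quantitative dependence is exactly what \cite{MR609100} supplies, and it is what ultimately produces the exponent $1000h^2$ in $\gamma = \delta^{1000h^2}$. Since $H_0$ has $h$ vertices and chromatic number $k$, a proper $k$-coloring of $H_0$ embeds $H_0$ into $K_k(h)$, so it suffices to count copies of $K_k(h)$, or even just to find one and then bootstrap.

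The core step is the supersaturation argument itself. I would fix $m$ to be a constant (depending only on $h$ and $\delta$) large enough that any $m$-vertex graph with at least $(1 - \frac{1}{k-1} + \frac{\delta}{2})\binom m2$ edges already contains a copy of $H_0$ — this is the finite, quantitative Erdős–Stone–Simonovits input, and the needed size of $m$ is polynomial in $h$ and $1/\delta$ with the right exponents. Then, given $G$ as in the statement, I sample an $m$-subset $S \subseteq V(G)$ uniformly at random. By linearity of expectation the expected edge density inside $S$ is the same as that of $G$, namely at least $1 - \frac1{k-1} + \delta$; by a Markov/averaging estimate, a constant fraction of all $m$-subsets have internal edge density at least $1 - \frac1{k-1} + \frac\delta2$, hence contain a copy of $H_0$. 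Counting pairs (copy of $H_0$, $m$-subset containing it) and dividing by the number of $m$-subsets containing a fixed copy gives the claimed bound $\gamma(n)_h$ with $\gamma$ a constant depending on $m$, hence on $h$ and $\delta$. Tracking the constants through — the size of $m$ from the quantitative ESS input, the loss in the Markov step, and the combinatorial factors $\binom{n-h}{m-h}/\binom nm$ — yields the explicit bound $\gamma = \delta^{1000h^2}$ (for $n$ sufficiently large, which is where the ``$o(1)$'' slack in passing from $\binom m2$ to the integer edge count, and from $(n)_h$ to $n^h$, is absorbed).

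The main obstacle is bookkeeping rather than conceptual: extracting the explicit polynomial-in-$h$, polynomial-in-$1/\delta$ bound on the threshold $m$ from \cite{MR609100} and then propagating it cleanly through the averaging step to land on the stated exponent $1000h^2$. One has to be careful that the quantitative Erdős–Stone input is applied with $\delta/2$ (to leave room for the Markov argument) and with the parameter $s=h$ (so that $K_k(h) \supseteq H_0$), and that the final constant is genuinely of the form $\delta^{O(h^2)}$ and not, say, $\delta^{O(h^2)}/h^{O(h^2)}$ in a way that breaks the clean statement; absorbing any such lower-order factors into the exponent (using $\delta<1$ and $n$ large) is the only delicate point. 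Since the lemma is quoted as following from \cite{MR609100} together with a standard proof such as \cite[Lemma 2.1]{MR2866732}, in the writeup I would simply cite those and state the resulting constant, rather than reproving the supersaturation machinery from scratch.
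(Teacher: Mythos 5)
The paper does not give its own proof of this lemma; it cites Erd\H os--Simonovits for the qualitative statement, Chv\'atal--Szemer\'edi \cite{MR609100} for the quantitative Erd\H os--Stone input, and \cite[Lemma 2.1]{MR2866732} for the standard sampling argument -- which is exactly the chain of steps your proposal reconstructs, including the $\delta/2$-slack for the Markov step, the double-count over $m$-subsets, and the bookkeeping that lands on $\gamma = \delta^{O(h^2)}$. Your proposal is correct and matches the intended (referenced) argument.
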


\section{Two-color bonbons}\label{sec:proof-main}
In this section we prove \cref{thm:main}. From now on, we fix $k \geq 4$ and let $H_0$ be a fixed $k$-critical graph on $h$ vertices, and we also treat $t \geq (1000hk)^{10} h^{10k}$ as fixed.
\subsection{Choices of parameters}
For convenience, we record here all the parameters we will need in our proof of \cref{thm:main}. Somewhat unusually, the constraints we have on our parameters do not naturally form a linear order (i.e.\ it is not the case that we have parameters $a_1,\dots,a_m$ such that each $a_i$ must be sufficiently small with respect to $a_{i+1}$), but rather they form a poset structure, generated by the following inequalities:
\[
	\theta \ll \frac 1k, \qquad \varepsilon \ll \theta, \qquad \varepsilon \ll \frac 1h, \qquad \lambda \ll \varepsilon, \qquad \gamma \ll \lambda.
    % , \qquad \tau \ll \gamma, \qquad \frac 1t \ll \tau.
\]
In addition, there is one further parameter $\tau$, which is defined as $(1+\lambda)^{-t}$. In contrast to the other parameters, we cannot simply pick $\tau$ to be sufficiently small, as we end up needing both upper and lower bounds on $\tau$. Hence, it is defined as $(1+\lambda)^{-t}$ in order to be sufficiently large with respect to $t$, and then our lower bound on $t$ ensures that $\tau \ll \gamma$.

The rest of this subsection formalizes the choices of parameters and the inequalities we need them to obey; the reader may safely skip this subsection and simply bear the above qualitative dependencies in mind.

We first pick $\theta,\varepsilon,$ and $\lambda$ to be polynomially small in $k$ and $h$, defined as
\begin{align*}
	\theta &= \frac{1}{50k}&
	\varepsilon &= \frac{\theta^2}{2h^2k^2}=\frac{1}{5000h^2k^4}&
	\lambda &= \frac{\varepsilon^2}{(200h)^2} = \frac{1}{2\cdot 10^8 h^4 k^4}.
\end{align*}
Additionally, we let $\gamma$ be the parameter from \cref{lem:supersaturation}, applied to the graph $H_0$ and $\delta=\lambda$, namely 
$$\gamma = \lambda^{1000h^2}.$$
We now define a further parameter depending on both $k$ and $t$, namely
\begin{align*}
	\tau &= \frac 1{(1+\lambda)^{t}}.
\end{align*}
The following lemma records the important inequalities that we will need these parameters to satisfy. It is straightforward to check that the choices above guarantee that all these inequalities are satisfied.

\begin{lem}
	The following inequalities hold.
	\begin{lemenum}
		\item $\tau \leq \gamma \lambda^h \leq 4^{-h^2-1} \lambda^h$. \label{lemitem:tau-v-gamma}
		\item $2 \tau^{h^{-k}} \leq k^{-10k}\varepsilon$. \label{lemitem:tau-small-stability}
		\item $\theta \geq 2k \tau^{1/h}$. \label{lemitem:theta-2ktau}
		\item $e^{8(k-1)\sqrt \lambda t} > 4^{h^2} t$.\label{lemitem:8k-1}
		\item $\frac{1}{k-1}-\sqrt{2 \varepsilon} \geq \frac 1k$. \label{lemitem:equitable-error}
	\end{lemenum}
\end{lem}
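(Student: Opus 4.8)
The plan is routine verification. Each of (a)--(d) reduces, after taking logarithms, to checking that some quantity polynomial in $h$ and $k$ is dominated by $t \geq (1000hk)^{10}h^{10k}$, while (e) does not involve $t$ at all. The one estimate I reuse is $(1+\lambda)^s \geq e^{\lambda s/2}$ for $s \geq 0$, valid because $\log(1+\lambda) \geq \lambda - \lambda^2/2 \geq \lambda/2$ when $\lambda \leq 1$; taking $s = t/c$ gives the clean bound $\tau^{1/c} = (1+\lambda)^{-t/c} \leq e^{-\lambda t/(2c)}$ for every $c \geq 1$, which I use in (a), (b), (c).

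For (e), I would write $\frac{1}{k-1} - \frac 1k = \frac{1}{k(k-1)} > \frac{1}{k^2}$ while $\sqrt{2\varepsilon} = \frac{1}{50hk^2} \leq \frac{1}{k^2}$. In (a), the inequality $\gamma\lambda^h \leq 4^{-h^2-1}\lambda^h$ is simply $\gamma = \lambda^{1000h^2} \leq 4^{-h^2-1}$, immediate since $\lambda < \frac 14$ and $1000h^2 \geq h^2+1$; and $\tau \leq \gamma\lambda^h$ follows, via $\tau \leq e^{-\lambda t/2}$ and $\gamma\lambda^h = \lambda^{1000h^2+h}$, once $\lambda t/2 \geq (1000h^2+h)\log(1/\lambda)$, which holds because $\log(1/\lambda) = O(\log(hk))$, $\lambda$ is an explicit inverse polynomial in $h,k$, and hence the left side is at least $t/\mathrm{poly}(h,k) \gg h^2\log(hk)$. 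Items (b), (c), (d) go the same way: (b) needs $\frac{\lambda t}{2h^k} \geq \log 2 + 10k\log k + \log(1/\varepsilon)$, (c) needs $\frac{\lambda t}{2h} \geq \log(100k^2)$, and (d), after logarithms, needs $8(k-1)\sqrt\lambda\, t > 2h^2\log 2 + \log t$; in each case the left side is $t$ times an explicit inverse polynomial in $h,k$, the right side is at most a polynomial in $h,k$ plus $\log t$, and the lower bound on $t$ finishes it (for (d) one also uses that $8(k-1)\sqrt\lambda\, t - \log t$ is increasing in $t$ beyond the threshold, so it suffices to check the inequality there).

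There is no real obstacle here: the content is entirely in the \emph{choice} of the parameters, already made above, and this lemma merely records that the choices work. The one point deserving care is (b), where the exponent $h^{-k}$ on $\tau$ must be absorbed by the factor $h^{10k}$ built into the lower bound on $t$; this is exactly why $t$ is required to be at least $(1000hk)^{10}h^{10k}$ (it leaves a factor $h^{9k}$ to spare).
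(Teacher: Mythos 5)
Your proposal is correct and follows essentially the same route as the paper: both are direct verifications that plug in the explicit parameter values, use an elementary bound of the form $(1+\lambda)^t\geq e^{\Omega(\lambda t)}$ (the paper uses $1+x\geq 2^x$) to control $\tau$, and invoke the lower bound $t\geq(1000hk)^{10}h^{10k}$, with (b) indeed being the item where the $h^{10k}$ factor is essential. The only cosmetic differences are that the paper deduces (c) from (a) rather than directly and handles the $\log t$ term in (d) by the chain $e^{8(k-1)\sqrt{\lambda}t}\geq(4t)^{\sqrt{\lambda t}}$ instead of a monotonicity argument.
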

\begin{proof}
	\hfill
	\begin{enumerate}[label=(\alph*)]
		\item \label{itemproof:tau-ub} We know that $\gamma = \lambda^{1000h^2} \leq 4^{-h^2-1}$, so the second inequality is immediate. For the first, we have that $\gamma \lambda^h \geq \lambda^{5000h^2} \geq 2^{-5000 h^2/\lambda}= 2^{-10^{12} h^6 k^4}$, using the fact that $x \geq 2^{-1/x}$ for all $x>0$. Since $1+x \geq 2^x$ for all $x \in [0,1]$, we also see that $\tau \leq 2^{- \lambda t}$. Finally, $t \geq (1000 hk)^{10}$, so $\lambda t \geq 10^{12} h^6 k^4$, which proves the claim.
		\item Since $h \geq k \geq 4$, it suffices to prove that $\tau \leq 2^{-h^{10k}}\varepsilon$, which itself follows from $\tau \leq 2^{-h^{10k}/\varepsilon}$. We saw in part \ref{itemproof:tau-ub} that $\tau \leq 2^{-\lambda t}$, so it suffices to prove that $\lambda t\geq h^{10k}/\varepsilon$, which is true since $\lambda(1000 hk)^{10} \geq 10^{12}h^6 k^4 \geq 1/\varepsilon$.
		\item From part \ref{itemproof:tau-ub}, we see that $\tau^{1/h} \leq \lambda$, which yields the desired bound since $2k \lambda \leq \theta$.
		\item We first note that by the computation in part \ref{itemproof:tau-ub}, we have that $\sqrt{\lambda t} \geq 10^6 h^3 k^2>h^2$. Since $k \geq 3$, we see that $e^{8(k-1)} \geq 4$. Therefore, $$e^{8(k-1)\sqrt \lambda t} \geq (4^{\sqrt t})^{\sqrt{\lambda t}} \geq (4t)^{\sqrt{\lambda t}} \geq t \cdot 4^{\sqrt{\lambda t}} > 4^{h^2}t,$$ where the second inequality uses that $4^{\sqrt t} \geq 4t$ for all $t \geq 4$. 
		\item We have that $\frac 1{k-1} - \frac 1k = \frac{1}{k(k-1)} \geq \frac{1}{k^2}$. Since $\sqrt{2 \varepsilon} < 1/k^3$, this proves the desired bound.
	\qedhere\end{enumerate}
\end{proof}

\subsection{Proof of Theorem \ref{thm:main}}
Recall that we have fixed $H_0$ and $t$. We let $H$ be obtained from $H_0$ by arbitrarily adding $t-h$ pendant edges to $H_0$, so that $H$ has $t$ vertices. Let the vertices of $H_0$ be $u_1,\dots,u_h$, and let $s_i$ be the number of pendant edges incident to $u_i$ for $1 \leq i \leq h$, so that $s_1+\dotsb+s_h = t-h$. We assume without loss of generality that $s_1 \geq \dotsb \geq s_h$, which in particular implies that $s_1 \geq \frac{t-h}{h}\geq \frac{t}{2h}$.

We recall (\ref{eq:turan-coloring-ub-mhn}), which gives an upper bound on $m(H,n)$ coming from the Tur\'an coloring. Namely, we have that
\begin{equation}\label{eq:turan-ub}
	m(H,n) \leq (k-1)^{1-t} (n)_t.
\end{equation}
We also henceforth let $n$ be sufficiently large in terms of $H_0$ and $t$, and let $\chi$ be an optimal two-coloring of $E(K_n)$, that is a coloring with exactly $m(H,n)$ monochromatic labeled copies of $H$. Our goal is to show that $\chi$ is isomorphic to the Tur\'an coloring. By combining \cref{lem:degree-regular} and \eqref{eq:turan-ub}, we find that for every vertex $v \in V(K_n)$, we have that
\begin{equation}\label{eq:local-turan-ub}
	m_v(H,\chi) \leq \left(1+\frac Cn\right) \frac tn m(H,n) \leq \left(1+\frac Cn\right) t(k-1)^{1-t}(n-1)_{t-1}
\end{equation}
for some constant $C$ depending only on $H$.

Let $d=(1+\lambda)n/(k-1)$. Note that $d$ is slightly larger than the red degree of any vertex in the Tur\'an coloring. The following lemma is used several times in the proof of \cref{thm:main}. Additionally, its proof exemplifies the kinds of arguments that arise throughout. 

\begin{lem}\label{lem:few-H0-in-high-deg}
Suppose $n$ is sufficiently large. Let $S\subseteq V(K_n)$ be a set of vertices, each with at least $d$ blue neighbors. Then $S$ contains fewer than $\tau (n)_h$ labeled blue copies of $H_0$.
\end{lem}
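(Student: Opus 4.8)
The plan is to argue by contradiction using the Tur\'an upper bound \eqref{eq:turan-ub}. Suppose $S$ contained at least $\tau(n)_h$ labeled blue copies of $H_0$; I will exhibit more than $m(H,n)$ monochromatic (in fact blue) copies of $H$ in $\chi$, contradicting the optimality of $\chi$.

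Write $V(H_0)=\{u_1,\dots,u_h\}$ and let $s_i$ be the number of pendant edges of $H$ at $u_i$, so that $s_1+\dots+s_h=t-h$. A labeled blue copy of $H_0$ with all $h$ vertices in $S$ is an injection $\phi_0\colon V(H_0)\to S$ sending edges to blue edges, and I would complete it to a labeled blue copy of $H$ by processing $u_1,\dots,u_h$ in turn: for $u_i$, choose the images of its $s_i$ pendant neighbours among the blue neighbours of $\phi_0(u_i)$, keeping all chosen vertices distinct from one another and from the core. Since $\phi_0(u_i)\in S$ has at least $d$ blue neighbours while at most $t$ vertices are ever forbidden, the number of such completions is at least $\prod_{i=1}^h (d-t)_{s_i}\geq (d-2t)^{t-h}$, which is positive once $n$ is large compared with $t$ and $k$ (so that $d\geq 2t$). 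Since a labeled blue copy of $H$ determines its restriction to $V(H_0)$, distinct choices of $\phi_0$, and distinct completions of a fixed $\phi_0$, produce distinct copies of $H$; hence $\chi$ contains at least $\tau(n)_h(d-2t)^{t-h}$ monochromatic copies of $H$.

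It remains to check that this overshoots the Tur\'an bound $m(H,n)\leq (k-1)^{1-t}(n)_t$. For $n$ large one has $(d-2t)^{t-h}=(1-o(1))\bigl((1+\lambda)n/(k-1)\bigr)^{t-h}$, so using the identity $\tau(1+\lambda)^{t-h}=(1+\lambda)^{-h}$ together with $(n)_h\,n^{t-h}\geq (n)_t$,
\[
	\tau(n)_h(d-2t)^{t-h}\;\geq\;(1-o(1))\,(1+\lambda)^{-h}(k-1)^{h-1}\cdot(k-1)^{1-t}(n)_t.
\]
Since $k\geq 4$ and $\lambda$ is tiny, $(k-1)^{h-1}\geq 3^{h-1}\geq 3>(1+\lambda)^h$, whence $(1+\lambda)^{-h}(k-1)^{h-1}>1$; thus for $n$ sufficiently large
\[
	m(H,\chi)\;\geq\;\tau(n)_h(d-2t)^{t-h}\;>\;(k-1)^{1-t}(n)_t\;\geq\;m(H,n)\;=\;m(H,\chi),
\]
a contradiction.

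The only point that needs care is that the \emph{full} exponential factor $(1+\lambda)^{t-h}$ arising from the blue-degree bound $d=(1+\lambda)n/(k-1)$ must be kept: the definition $\tau=(1+\lambda)^{-t}$ is chosen precisely so that $\tau(1+\lambda)^{t-h}$ collapses to the absolute constant $(1+\lambda)^{-h}$, which then beats $(k-1)^{1-h}$ because $k-1\geq 3>1+\lambda$. Any weakening of the base of this exponential (for instance, bounding the number of completions by $(d/2)^{t-h}$) or any loss of a polynomial factor in $n$ (for instance, passing to a single high-blue-degree vertex and invoking \cref{lem:degree-regular} instead of the global bound \eqref{eq:turan-ub}) would be fatal, since $t$ is astronomically large compared with $h$ and $k$. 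Notably, neither the stability lemma \cref{lem:stability} nor the supersaturation lemma \cref{lem:supersaturation} is needed here; this ``extend a forbidden blue configuration to blue copies of $H$, then exceed the Tur\'an count'' move is the recurring motif in the proof of \cref{thm:main}.
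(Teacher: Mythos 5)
Your proof is correct and follows essentially the same route as the paper's: extend each labeled blue copy of $H_0$ in $S$ to many labeled blue copies of $H$ via the blue-degree-at-least-$d$ hypothesis, and use the choice $\tau=(1+\lambda)^{-t}$ to show the resulting count exceeds the Tur\'an upper bound \eqref{eq:turan-ub}. The only cosmetic difference is bookkeeping: the paper bounds the number of extensions by $(d-h)_{t-h}$ and ends with an explicit factor of $2$, while you use $(d-2t)^{t-h}$ and the constant $(1+\lambda)^{-h}(k-1)^{h-1}>1$; both are valid.
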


\begin{proof}
	Any labeled blue copy of $H_0$ in $S$ extends to at least $(d-h)_{t-h}$ labeled blue copies of $H$, since every vertex in this $H_0$ has blue degree at least $d$. Therefore, if there are $\tau(n)_h$ labeled blue copies of $H_0$ in $S$, then the total number of blue labeled copies of $H$ in $\chi$ is at least
	\begin{align*}
		\tau(n)_h \cdot (d-h)_{t-h} &= \tau (n)_h \cdot (1-o(1))\left(\frac{1+\lambda}{k-1}\right)^{t-h} (n)_{t-h} \geq (1-o(1)) \tau \left(\frac{1+\lambda}{k-1}\right)^{t-h} (n)_t,
	\end{align*}
	where the $o(1)$ tends to $0$ as $n \to \infty$. We claim that this is more than the upper bound in (\ref{eq:turan-ub}), whence a contradiction. To see this, we note that
	\begin{align*}%\label{eq:d-power-lb}
		\left( \frac{1+\lambda}{k-1} \right) ^{t-h} &= \left( \frac{k-1}{1+\lambda} \right) ^{h-1}\left( \frac{1+\lambda}{k-1} \right) ^{t-1} \geq 8 (1+\lambda)^{t-1} (k-1)^{1-t} \geq 4(1+\lambda)^t (k-1)^{1-t},
	\end{align*}
	using the fact that $h \geq k \geq 4$ and $\lambda \leq \frac 12$, and hence $(k-1)/(1+\lambda) \geq 2$. Since $(1+\lambda)^t \tau= 1$, this shows that for sufficiently large $n$, the total number of labeled blue copies of $H$ is at least
	\begin{equation}\label{eq:basic-d-lb}
		\tau(n)_h \cdot (d-h)_{t-h} \geq 2 (k-1)^{1-t} (n)_t,
	\end{equation}
	contradicting (\ref{eq:turan-ub}).
\end{proof}

We are now ready to begin the proof of \cref{thm:main}. The proof is somewhat long, since it proceeds by iteratively improving our understanding of the structure of a coloring which minimizes the number of monochromatic copies of $H$, until we eventually can conclude that such a coloring is isomorphic to the Tur\'an coloring. In order to keep the logical flow manageable, we split the proof into a number of claims, each of which provides more structural information on the coloring.

\begin{proof}[Proof of \cref{thm:main}]

Fix a red/blue coloring $\chi$ of $E(K_n)$ with the minimum number of monochromatic copies of $H$. We partition the vertices of $K_n$ into three subsets. The first, $V_R$, consists of all vertices with red degree at least $n-d$. Similarly, $V_B$ consists of those vertices with blue degree at least $n-d$. Finally, $V_{RB}$ consists of all remaining vertices, namely those vertices with both red and blue degree at least\footnote{Recall that we are omitting all floor and ceiling signs, and thus are treating $d$ as an integer even though $(1+\lambda)n/(k-1)$ need not be an integer. As such, every vertex has either red degree at least $n-d$ or blue degree at least $d$, whereas we could have an off-by-one error here if $d$ were not an integer. Such rounding and off-by-one issues occur throughout the paper, but we will not belabor this point further.} $d$. We remark that it is at this partitioning step that our proof fails for $k=3$. For indeed, if $k=3$, then $d = \frac{1+\lambda}{2}n$, meaning that $V_R$ and $V_B$ will not in general be disjoint. However, since $k \geq 4$ and $\lambda<1/2$, we have that $V_R \cap V_B=\varnothing$. 

Our first claim shows that $V_{RB}$ must be small.
\begin{claim}\label{lem:V0-small}
	$\ab {V_{RB}} \leq \lambda n$.
\end{claim}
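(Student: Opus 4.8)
We want to show that $\ab{V_{RB}} \le \lambda n$, where $V_{RB}$ is the set of vertices with both red and blue degree at least $d = (1+\lambda)n/(k-1)$. Suppose for contradiction that $\ab{V_{RB}} > \lambda n$. The key idea is that every vertex of $V_{RB}$ has large blue degree (at least $d$), so by \cref{lem:few-H0-in-high-deg} the set $V_{RB}$ contains few blue copies of $H_0$; but simultaneously every vertex of $V_{RB}$ has large \emph{red} degree (at least $d$), so the same argument with colors swapped — or rather, the same counting — shows $V_{RB}$ contains few red copies of $H_0$ as well. On the other hand, $V_{RB}$ is a set of $\Omega(n)$ vertices, so by \cref{lem:clique-multiplicity} (applied with $H_1=H_2=H_0$, and noting that the clique number of $H_0$ is at most $h$), the two-coloring restricted to $V_{RB}$ must contain at least $4^{-h^2}(\ab{V_{RB}})_h \ge 4^{-h^2}(\lambda n)_h \approx 4^{-h^2}\lambda^h (n)_h$ monochromatic copies of $H_0$ in one of the colors. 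This is where the inequality $\tau \le 4^{-h^2-1}\lambda^h$ from \cref{lemitem:tau-v-gamma} comes in: it guarantees that $4^{-h^2}\lambda^h (n)_h$ is (up to the lower-order corrections) strictly larger than $2\tau(n)_h$, contradicting the fact that $V_{RB}$ contains at most $\tau(n)_h$ copies of $H_0$ in \emph{each} color.

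Let me spell out the steps. First, I would observe that \cref{lem:few-H0-in-high-deg} is stated for blue, but its proof only used that every vertex of the relevant set had blue degree $\ge d$; by symmetry (swapping the roles of red and blue), the identical argument shows that any set $S$ all of whose vertices have red degree at least $d$ contains fewer than $\tau(n)_h$ labeled \emph{red} copies of $H_0$. Since every vertex of $V_{RB}$ has red degree $\ge d$ and blue degree $\ge d$, we conclude $V_{RB}$ contains fewer than $\tau(n)_h$ red copies of $H_0$ and fewer than $\tau(n)_h$ blue copies of $H_0$, hence fewer than $2\tau(n)_h$ monochromatic copies of $H_0$ in total. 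Second, assuming $\ab{V_{RB}} > \lambda n$, and noting $n$ is large so $\lambda n \ge 4^h$, I apply \cref{lem:clique-multiplicity} to the restriction of $\chi$ to $V_{RB}$ with $H_1 = H_2 = H_0$ and $h_1 = h_2 = h$: this yields at least $4^{-h^2}(\ab{V_{RB}})_h \ge 4^{-h^2}(\lambda n)_h$ monochromatic labeled copies of $H_0$ inside $V_{RB}$. Third, I compare: $(\lambda n)_h = (1-o(1))\lambda^h (n)_h$ as $n \to \infty$, so for $n$ large enough $4^{-h^2}(\lambda n)_h \ge \tfrac12 \cdot 4^{-h^2}\lambda^h (n)_h \ge 2\tau(n)_h$, using $\tau \le 4^{-h^2-1}\lambda^h$ from \cref{lemitem:tau-v-gamma} (indeed $\tfrac12\cdot 4^{-h^2}\lambda^h = 4^{-h^2-1}\lambda^h + (\tfrac14 - \tfrac18)\cdots$ — more simply, $\tfrac12 \cdot 4^{-h^2} = 2 \cdot 4^{-h^2-1}$, giving exactly the factor $2$). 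This contradicts the bound from the first and second steps combined, completing the proof.

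**The main obstacle.** There is no deep obstacle here; the proof is essentially a direct combination of two of the general lemmas from \cref{sec:general-lemmas}, with the parameter inequality \cref{lemitem:tau-v-gamma} chosen precisely so that the two estimates collide. The only mildly delicate point is bookkeeping the $o(1)$ terms: one must check that $(\lambda n)_h \ge \tfrac12 \lambda^h (n)_h$ for $n$ sufficiently large (which is trivially true since $(\lambda n)_h / (n)_h \to \lambda^h$), and that $\lambda n \ge 4^h$ so that \cref{lem:clique-multiplicity} applies — both are guaranteed by taking $n$ large in terms of $H$ and $\lambda$ (equivalently, in terms of $H_0$ and $t$, since $\lambda$ depends only on $h$ and $k$). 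One should also take care that \cref{lem:clique-multiplicity} requires $h_1, h_2 \le h$, which holds with equality. I would present the argument in three or four lines, invoking \cref{lem:few-H0-in-high-deg} (and its red analogue by symmetry), \cref{lem:clique-multiplicity}, and \cref{lemitem:tau-v-gamma}, and leave the routine $o(1)$ verification implicit.
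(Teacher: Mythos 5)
Your proposal is correct and follows essentially the same route as the paper: apply \cref{lem:clique-multiplicity} with $H_1=H_2=H_0$ to the induced coloring on $V_{RB}$ to get many monochromatic copies of $H_0$ in some color, and contradict \cref{lem:few-H0-in-high-deg} (which applies to either color since every vertex of $V_{RB}$ has both red and blue degree at least $d$) via \cref{lemitem:tau-v-gamma}. The paper handles the two-color symmetry with a ``without loss of generality'' rather than your explicit factor of $2$, but this is a cosmetic difference.
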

\begin{proof}
	Suppose for contradiction that $\ab{V_{RB}} > \lambda n$. Since $n$ is sufficiently large, we may therefore assume that $\ab{V_{RB}} \geq 4^{h}$, so we may apply \cref{lem:clique-multiplicity} to the induced coloring on $V_{RB}$, with $H_1=H_2=H_0$. We then conclude without loss of generality that $V_{RB}$ contains at least $4^{-h^2}(n)_h$ labeled blue copies of $H_0$. On the other hand, since every vertex in $V_{RB}$ has blue degree at least $d$, we may apply \cref{lem:few-H0-in-high-deg} and conclude that $V_{RB}$ contains at most $\tau (n)_h$ blue $H_0$. Combining these two bounds, we see that
	\[
		\tau (n)_h \geq 4^{-h^2} ({\ab{V_{RB}}})_h \geq  (1-o(1))4^{-h^2} \lambda^h (n)_h.
	\]
	However, \cref{lemitem:tau-v-gamma} shows that $\tau \leq 4^{-h^2-1} \lambda^h$, which is a contradiction for $n$ sufficiently large.
\end{proof}

Similarly, our next claim shows that one of $V_R$ and $V_B$ must also be small.
\begin{claim}\label{lem:V_R-or-V_B-small}
	$\min \{\ab{V_R},\ab{V_B}\} \leq 18 \lambda n$.
\end{claim}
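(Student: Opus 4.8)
The idea is to argue by contradiction: suppose both $V_R$ and $V_B$ are large, say each of size more than $18\lambda n$. Intuitively, if there are many vertices of very high red degree *and* many vertices of very high blue degree, then one can find many monochromatic copies of $H_0$ in *each* color, and then, using that every vertex of such a copy has huge monochromatic degree (at least $n-d$, which is close to $n$), extend each copy to a huge number of copies of $H$ by adding the $t-h$ pendant edges one vertex at a time. Since we get this lower bound in both colors simultaneously, we substantially overshoot the Tur\'an bound \eqref{eq:turan-ub}, giving the contradiction. More precisely, within $V_R$ (which has $\ge 18\lambda n \gg 4^h$ vertices, so \cref{lem:clique-multiplicity} applies with $H_1=H_2=H_0$), we either find $\ge 4^{-h^2}(|V_R|)_h$ red labeled copies of $H_0$ or $\ge 4^{-h^2}(|V_R|)_h$ blue ones; the blue case is immediately killed by \cref{lem:few-H0-in-high-deg}, since \cref{lemitem:tau-v-gamma} gives $\tau \le 4^{-h^2-1}\lambda^h \le 4^{-h^2}(18\lambda)^h$ (for $n$ large), so we must be in the red case. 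Symmetrically, $V_B$ contains $\ge 4^{-h^2}(|V_B|)_h$ blue labeled copies of $H_0$.

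Now I would do the extension count. Each red copy of $H_0$ sitting inside $V_R$ has all $h$ of its vertices with red degree $\ge n-d$; adding $t-h$ pendant edges one at a time, the number of red labeled copies of $H$ it extends to is at least $(n-d-h)_{t-h}$. Using $d=(1+\lambda)n/(k-1)$ and $k\ge 4$, $\lambda<1/2$, we have $n-d \ge (1-\tfrac{1+\lambda}{k-1})n \ge \tfrac12 n$, so $(n-d-h)_{t-h} \ge (1-o(1))(1/2)^{t-h}(n)_{t-h} \ge (1-o(1))2^{-t}(n)_t$. Hence the number of red labeled copies of $H$ is at least $(1-o(1))4^{-h^2}(18\lambda)^h \cdot 2^{-t}(n)_t$, and symmetrically there are at least this many blue copies of $H$, so $m(H,n) \ge (1-o(1))\,2\cdot 4^{-h^2}(18\lambda)^h\,2^{-t}(n)_t$. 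I would then compare this against \eqref{eq:turan-ub}, $m(H,n)\le (k-1)^{1-t}(n)_t$; since $k\ge 4$ we have $(k-1)^{1-t} \le 3^{1-t}$, but $2^{-t}$ alone is not smaller than $(k-1)^{1-t}$, so the crude $1/2$ bound above is not quite enough — one must instead use the sharper estimate $n-d = (1-\tfrac{1+\lambda}{k-1})n$, giving the per-copy extension factor $\bigl(\tfrac{k-2-\lambda}{k-1}\bigr)^{t-h}$, which for $k \ge 4$ and small $\lambda$ exceeds $(k-1)^{-t}$ by an exponential factor (roughly $(k-2)^t$ vs. $(k-1)^{-t}$... wait, that's backwards).

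Let me reconsider the arithmetic, as this is where the real obstacle lies: for the contradiction we need the *extension factor from a single color* times the polynomial-in-$\lambda$ constant to already beat $(k-1)^{1-t}$. With red degree only $\ge n-d$, the extension factor is $(1-\tfrac{1+\lambda}{k-1})^{t-h}=(\tfrac{k-2-\lambda}{k-1})^{t-h}$, and since $k\ge 4$, $\tfrac{k-2-\lambda}{k-1}\ge \tfrac{2-\lambda}{3} > \tfrac12$, so this is $\ge (1-o(1))(\tfrac{2-\lambda}{3})^{t-h}(n)_{t-h}$; the ratio of this to $(k-1)^{1-t}(n)_t$ is then $\gtrsim (\tfrac{(k-2-\lambda)(k-1)}{k-1})^t \cdot (\text{const}) = (k-2-\lambda)^t\cdot(\text{const})$, which is exponentially large. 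So in fact even a single color already contradicts \eqref{eq:turan-ub} once we have $\gtrsim 4^{-h^2}\lambda^h (n)_h$ monochromatic copies of $H_0$ all of whose vertices have monochromatic degree $\ge n-d$ — which, crucially, holds automatically for *all* vertices in $V_R$ (red) and in $V_B$ (blue). Thus the proof should proceed: if $|V_R|>18\lambda n$, run \cref{lem:clique-multiplicity} on $V_R$; the blue outcome contradicts \cref{lem:few-H0-in-high-deg} via \cref{lemitem:tau-v-gamma}; the red outcome gives $\gtrsim 4^{-h^2}\lambda^h(n)_h$ red $H_0$'s inside $V_R$, each extending to $\ge (n-d-h)_{t-h}$ red $H$'s (using that $V_R$-vertices have red degree $\ge n-d$), and this count exceeds \eqref{eq:turan-ub} because $(k-2-\lambda)^t \cdot 4^{-h^2}\lambda^h \to \infty$ — contradiction; hence $|V_R|\le 18\lambda n$, and symmetrically the same argument bounds $|V_B|$, so certainly the minimum is $\le 18\lambda n$. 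The main subtlety to get right is the constant $18$: it must be chosen so that $4^{-h^2}(18\lambda)^h > \tau$ comfortably, which \cref{lemitem:tau-v-gamma} (giving $\tau \le 4^{-h^2-1}\lambda^h$) provides with room to spare, and so that the exponential gain $(k-2-\lambda)^t$ dominates — which it does for all $t$ since $k\ge4$ forces $k-2-\lambda \ge 2-\lambda > 1$.
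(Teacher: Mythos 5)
There is a genuine gap, and it is fatal to the whole approach. When you apply \cref{lem:clique-multiplicity} inside $V_R$ and land in the blue outcome (at least $4^{-h^2}(\ab{V_R})_h$ blue labeled copies of $H_0$ in $V_R$), you claim this is ``immediately killed'' by \cref{lem:few-H0-in-high-deg}. But that lemma requires every vertex of the set to have at least $d$ \emph{blue} neighbors, and vertices of $V_R$ have red degree at least $n-d$, hence blue degree at most $d-1<d$. So the hypothesis fails and the blue outcome cannot be excluded: a set of vertices of low blue degree can perfectly well span many blue copies of $H_0$ (for instance, $V_R$ could internally be a blue clique of size $18\lambda n\ll d$), and since those vertices have few blue neighbors, such copies of $H_0$ do not extend to many blue copies of $H$, so no contradiction with \eqref{eq:turan-ub} is available. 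The outcome you \emph{can} kill in $V_R$ is the red one --- which you do correctly, essentially re-deriving the red-color version of \cref{lem:few-H0-in-high-deg} with the stronger degree bound $n-d$ --- but the blue outcome survives, and symmetrically in $V_B$ the red outcome survives.

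The conclusion you are driving at, namely that $\ab{V_R}$ and $\ab{V_B}$ are \emph{each} at most $18\lambda n$, is in fact false: in the Tur\'an coloring every vertex has blue degree $\frac{k-2}{k-1}n-O(1)\ge n-d$, so $V_B=V(K_n)$ there. Only the minimum of the two sizes can be bounded, so any argument that treats $V_R$ and $V_B$ separately and symmetrically is doomed. The paper's proof is genuinely two-sided: it considers the edges \emph{between} $V_R$ and $V_B$, assumes without loss of generality that at most half of them are blue, uses \cref{lem:supersaturation} together with \cref{lem:few-H0-in-high-deg} to cap the internal blue edge density of $V_B$, and then a blue-degree-sum count over $V_B$ (each vertex contributing at least $n-d$, with the missing blue edges having to land in $V_R\cup V_{RB}$) forces $\ab{V_R}$ to be small. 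Some interaction of this kind between the two sets is unavoidable.
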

\begin{proof}
	Recall that since $k \geq 4$, we have that $V_R$ and $V_B$ are disjoint. Suppose for contradiction that $\ab{V_R},\ab{V_B} \geq 18\lambda n$. Consider the set of edges between $V_R$ and $V_B$, and suppose without loss of generality that at most half these edges are blue. Note that since every vertex in $V_B$ has blue degree at least $n-d \geq d$, we must have at most $\tau(n)_h \leq \tau \lambda^{-h}(\ab{V_B})_h$ labeled blue copies of $H_0$ in $V_B$, by \cref{lem:few-H0-in-high-deg}. On the other hand, if the average blue degree inside $V_B$ is at least $(1- \frac{1}{k-1}+\lambda)\ab{V_B}$, then \cref{lem:supersaturation} implies that $V_B$ contains at least $\gamma (\ab{V_B})_h$ labeled blue copies of $H_0$. But \cref{lemitem:tau-v-gamma} shows that $\tau < \gamma \lambda^h$, and we conclude that the average blue degree in $V_B$ is less than $(1- \frac{1}{k-1}+\lambda)\ab{V_B}$.

	Now, we add up the blue degrees of all vertices in $V_B$. On the one hand, this is at least $(n-d)\ab{V_B}$, since every vertex in $V_B$ has blue degree at least $n-d$. On the other hand, in this sum, we count every blue edge in $V_B$ twice and every blue edge between $V_B$ and $V_R \cup V_{RB}$ once. Since we assumed that at most half the edges in $V_B \times V_R$ are blue, we conclude that
	\begin{align*}
		(n-d)\ab{V_B} &\leq \left(1- \frac{1}{k-1}+\lambda\right)\ab{V_B}^2 + \frac 12 \ab{V_B} \ab{V_R} + \ab{V_B} \ab{V_{RB}}\\
		&=\left[\left(1- \frac{1}{k-1}+\lambda\right)\ab{V_B} + \frac 12 \ab{V_R} + \ab{V_{RB}}\right]\ab{V_B}\\
		&=\left(n - \left(\frac{1}{k-1}- \lambda\right)\ab{V_B} - \frac12 \ab{V_R}\right)\ab{V_B},
	\end{align*}
	where in the last step we use that $n=\ab{V_R}+\ab{V_B}+\ab{V_{RB}}$.
	This implies that
	\[
		d\geq \left(\frac{1}{k-1}- \lambda\right)\ab{V_B} + \frac 12 \ab{V_R} = \frac{1}{k-1}n - \lambda \ab{V_B} - \frac{1}{k-1}\ab{V_{RB}}+\left(\frac 12 - \frac{1}{k-1}\right)\ab{V_R}.
	\]
	Since $d = \frac{1+\lambda}{k-1}n \leq \frac{1}{k-1}n + \lambda n$, we conclude that
	\[
		\left(\frac12 - \frac{1}{k-1}\right)\ab{V_R} \leq \lambda n + \lambda \ab{V_B} + \frac{1}{k-1}\ab{V_{RB}} \leq 3 \lambda n.
	\]
	Finally, since $\frac 12 - \frac{1}{k-1} \geq \frac 16$ for all $k \geq 4$, we conclude that $\ab{V_R} \leq 18 \lambda n$, as claimed.
\end{proof}
We henceforth assume without loss of generality that $\ab{V_R} \leq 18 \lambda n$. Combining this with \cref{lem:V0-small}, we conclude that $\ab{V_B} \geq (1- 19 \lambda) n$. 
Our next claim shows that within $V_B$, few vertices have blue degree much larger than $n-d$.
\begin{claim}\label{claim:V_B-few-high-deg}
	Let $V_B'\subseteq V_B$ be the set of vertices in $V_B$ with blue degree at least $(1- \frac{1}{k-1}+3\sqrt \lambda)n$. Then $\ab{V_B'}\leq 21 \sqrt \lambda \ab{V_B}$.
\end{claim}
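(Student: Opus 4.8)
\textbf{Proof proposal for \cref{claim:V_B-few-high-deg}.}
The plan is to argue by contradiction, exactly in the spirit of \cref{lem:few-H0-in-high-deg}: I would suppose $|V_B'| > 21\sqrt\lambda |V_B|$ and show this forces the blue graph induced on $V_B$ to contain at least $\tfrac{\gamma}{2}(n)_h$ blue copies of $H_0$, which clashes with \cref{lem:few-H0-in-high-deg}. Note that lemma applies to the set $S = V_B$, since every vertex of $V_B$ has blue degree at least $n-d$ in $K_n$, and $n-d \geq d$ whenever $k \geq 4$ and $\lambda \leq \tfrac12$.

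The first step is a minimum/average degree count inside $V_B$. Write $m = |V_B|$, $m' = |V_B'|$. By \cref{lem:V0-small} and the normalization $|V_R| \leq 18\lambda n$, at most $19\lambda n$ vertices lie outside $V_B$, so a vertex $v \in V_B$ has at least $d_B(v) - 19\lambda n$ blue neighbors inside $V_B$, where $d_B(v)$ is its blue degree in $K_n$. Every $v \in V_B$ satisfies $d_B(v) \geq n-d \geq (1 - \tfrac{1}{k-1} - \lambda)n$, and every $v \in V_B'$ satisfies $d_B(v) \geq (1 - \tfrac{1}{k-1} + 3\sqrt\lambda)n$; using $19\lambda \leq \sqrt\lambda$ (valid by the choice of $\lambda$) this gives blue degree inside $V_B$ at least $(1 - \tfrac{1}{k-1} - 20\lambda)n$ for every vertex of $V_B$ and at least $(1 - \tfrac{1}{k-1} + 2\sqrt\lambda)n$ for vertices of $V_B'$. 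Summing these bounds over $V_B$, the surplus $2\sqrt\lambda m' n$ contributed by $V_B'$ more than cancels the deficit $20\lambda m n$ once $m' > 21\sqrt\lambda m$ (indeed $2\sqrt\lambda m' > 42\lambda m$), so twice the number of blue edges inside $V_B$ exceeds $(1 - \tfrac{1}{k-1})mn + 22\lambda mn$; using $n \geq m$ this is at least $2(1 - \tfrac{1}{k-1} + \lambda)\binom m2$.

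Next I would apply \cref{lem:supersaturation} with $\delta = \lambda$ — precisely the value for which $\gamma = \lambda^{1000 h^2}$ was defined — to the blue graph on the $m$ vertices of $V_B$: it contains at least $\gamma(m)_h$ labeled blue copies of $H_0$. Since $m \geq (1 - 19\lambda)n$ and $\lambda h \ll 1$, we have $(m)_h \geq \tfrac12(n)_h$ for $n$ large, so $V_B$ contains at least $\tfrac{\gamma}{2}(n)_h$ blue copies of $H_0$. On the other hand, \cref{lem:few-H0-in-high-deg} applied to $V_B$ gives strictly fewer than $\tau(n)_h$ such copies. Hence $\tau > \gamma/2$, contradicting \cref{lemitem:tau-v-gamma}, which yields $\tau \leq \gamma\lambda^h < \gamma/2$. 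This contradiction proves $|V_B'| \leq 21\sqrt\lambda|V_B|$.

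There is no genuinely new idea required here; the only thing to watch is the bookkeeping with the constants — one must check that the degree surplus from $V_B'$ overpowers \emph{both} losses (edges leaving $V_B$, and the gap between $m$ and $n$) and still lands strictly above the Tur\'an density $1 - \tfrac{1}{k-1}$ by the margin $\lambda$ required by \cref{lem:supersaturation}. The inequalities $19\lambda \leq \sqrt\lambda$, $\lambda h \ll 1$, and $\tau \leq \gamma\lambda^h$, all guaranteed by our choices of parameters, are exactly what makes the argument close.
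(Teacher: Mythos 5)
Your proposal is correct and is essentially the paper's argument run in the contrapositive direction: the paper first uses \cref{lem:supersaturation} together with \cref{lem:few-H0-in-high-deg} to cap the average blue degree inside $V_B$ at $(1-\frac{1}{k-1}+\lambda)\ab{V_B}$ and then extracts the bound on $\ab{V_B'}$ from the same degree-sum computation you perform, whereas you assume $\ab{V_B'}$ is large, run the degree sum to push the blue edge density past the supersaturation threshold, and only then invoke the two lemmas for the contradiction. The constants all check out, so this is the same proof in different packaging.
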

\begin{proof}
	We first recall from the proof of \cref{lem:V_R-or-V_B-small} that the average blue degree in $V_B$ is at most $(1- \frac{1}{k-1}+\lambda)\ab{V_B}$; indeed, if this were not the case, then \cref{lem:supersaturation} would yield many blue copies of $H_0$ in $V_B$, contradicting the fact that there cannot be many such copies since every vertex in $V_B$ has blue degree at least $n-d \geq d$.

	Additionally, we note that since $\ab{V_B} \geq (1-19 \lambda)n$, every vertex in $V_B$ has at least $n-d - 19 \lambda n$ blue neighbors in $V_B$, and
	\[
		n - d - 19 \lambda n = \left(1 - \frac{1+\lambda}{k-1}-19 \lambda\right)n \geq \left(1- \frac{1}{k-1}-20 \lambda\right)n.
	\]
	Similarly, any vertex in $V_B'$ has at least $(1 - \frac{1}{k-1}+3\sqrt \lambda)n-19 \lambda n\geq (1- \frac{1}{k-1}+2\sqrt \lambda)n$ blue neighbors in $V_B$, since $20 \lambda \leq \sqrt \lambda$.
	Now, we sum up over all $v \in V_B$ the number of blue neighbors of $v$ in $V_B$. On the one hand, this sum is at most $(1- \frac{1}{k-1}+\lambda)\ab{V_B}^2 \leq (1- \frac{1}{k-1}+\lambda)n \ab{V_B}$, by our bound on the average degree in $V_B$. On the other hand, this sum is at least
	\begin{gather*}
		\sum_{v \in V_B'} \left(1 - \frac{1}{k-1}+2\sqrt \lambda\right)n +\sum_{v \in V_B \setminus V_B'} \left(1 - \frac{1}{k-1}-20 \lambda\right)n \\
		= \left(2\sqrt \lambda - 20 \lambda\right)n \ab{V_B'} + \left(1- \frac{1}{k-1} - 20 \lambda\right)n \ab{V_B}.
	\end{gather*}
	Combining this with our upper bound, we find that
	\[
		\left(2\sqrt \lambda - 20 \lambda\right) \ab{V_B'} \leq 21 \lambda \ab{V_B}.
	\]
	Since $20 \lambda \leq \sqrt \lambda$, we conclude that $\ab{V_B'} \leq 21 \sqrt \lambda \ab{V_B}$.
\end{proof}
Our next claim shows that in fact, there are no vertices of high red degree.
\begin{claim}\label{lem:every-vertex-high-blue-deg}
	Every vertex of $K_n$ has red degree at most $\frac{1}{k-1}n+65h\sqrt \lambda n$.
\end{claim}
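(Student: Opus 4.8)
The approach is to argue by contradiction. Suppose some vertex $v$ has red degree $d_R(v) > \frac{1}{k-1}n + 65h\sqrt\lambda n$; the plan is to show that $v$ then lies in more monochromatic red copies of $H$ than \eqref{eq:local-turan-ub} permits for $m_v(H,\chi)$. Every red copy I produce will place $v$ in the role of $u_1$, the vertex of $H_0$ with the most pendants, so that $v$ carries $s_1 \ge \frac{t}{2h}$ pendant edges of the copy; spreading $v$'s surplus of red edges over these many pendants is what makes the count explode.

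First I would locate many red copies of $H_0 - u_1$ inside the red neighborhood of $v$. Set $S = N_R(v) \cap (V_B \setminus V_B')$. Combining \cref{lem:V0-small,lem:V_R-or-V_B-small,claim:V_B-few-high-deg} gives $|V_R| + |V_{RB}| + |V_B'| \le 19\lambda n + 21\sqrt\lambda n$, so $|S| > \frac{1}{k-1}n \ge 4^h$. Every vertex of $S$ lies in $V_B$ and hence has at least $n - d \ge d$ blue neighbors, so \cref{lem:few-H0-in-high-deg} shows $S$ spans at most $\tau(n)_h$ blue copies of $H_0$, which by \cref{lemitem:tau-v-gamma} is fewer than $4^{-h^2}(|S|)_h$. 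Applying \cref{lem:clique-multiplicity} with $H_1 = H_2 = H_0$ to the coloring induced on $S$ then forces $S$ to span at least $4^{-h^2}(|S|)_h \ge \tfrac12 4^{-h^2}(k-1)^{-h} n^h$ red copies of $H_0$; deleting the vertex in the role of $u_1$ from each (an at most $n$-to-one operation) gives at least $\tfrac12 4^{-h^2}(k-1)^{-h} n^{h-1}$ red copies of $H_0 - u_1$ inside $S$. The essential point is that every vertex $w$ in such a copy lies in $V_B \setminus V_B'$, and hence has red degree larger than $\frac{1}{k-1}n - 3\sqrt\lambda n = \frac{n}{k-1}\bigl(1 - 3(k-1)\sqrt\lambda\bigr)$.

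Next I would extend each such copy to a red copy of $H$ through $v$. Given a red $H_0 - u_1$ on $w_2, \dots, w_h$ inside $S$, the set $\{v, w_2, \dots, w_h\}$ spans a red $H_0$ with $v$ as $u_1$ (as $v$ is red-adjacent to all of $S$); one then places the $s_1$ pendants of $u_1$ among the red neighbors of $v$ and the $s_i$ pendants of $u_i$ among the red neighbors of $w_i$, avoiding used vertices, and distinct choices give distinct labeled copies of $H$. Writing $\mu = (k-1)\sqrt\lambda$ and folding the $O(t)$ corrections of the form $1 - O(t/n)$ into a single factor $(1-o(1))^t \to 1$, one red $H_0 - u_1$ extends in at least $(1-o(1))^t\bigl(\tfrac{n}{k-1}\bigr)^{t-h}(1+65h\mu)^{s_1}(1-3\mu)^{t-h-s_1}$ ways, using $d_R(v) > \tfrac{n}{k-1}(1+65h\mu)$ for the pendants at $v$ and the red-degree bound above for those at the $w_i$; summing over the copies found in the previous step gives
\[
	m_v(H,\chi)\ \ge\ \tfrac12 4^{-h^2}(k-1)^{-t}\, n^{t-1}\, (1-o(1))^t\, (1+65h\mu)^{s_1}(1-3\mu)^{t-h-s_1}.
\]
Since $s_1 \ge \frac{t}{2h}$ and $t-h-s_1 \le t$, the last two factors are at least $\exp\bigl(64h\mu s_1 - 3.01\mu t\bigr) \ge \exp(28.9\mu t)$, and with $\mu = (k-1)\sqrt\lambda$ one has $e^{28.9\mu t} > \bigl(e^{8(k-1)\sqrt\lambda t}\bigr)^3 > (4^{h^2}t)^3$ by \cref{lemitem:8k-1}. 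This dwarfs the lost factor $4^{-h^2}$ and the factor $t$ in \eqref{eq:local-turan-ub}, so $m_v(H,\chi) > \bigl(1 + \tfrac Cn\bigr) t (k-1)^{1-t}(n-1)_{t-1}$ for $n$ large, contradicting \eqref{eq:local-turan-ub}. This proves the claim (and, since every vertex of $V_R$ has red degree at least $n - d > \frac{1}{k-1}n + 65h\sqrt\lambda n$, it also shows $V_R = \varnothing$).

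I expect the main obstacle to be this last balancing. A direct count of red copies of $H$ through $v$ loses a factor $(1 - 3(k-1)\sqrt\lambda)^t$ from the pendants placed at the $w_i$, whose red degrees we only control to within $3\sqrt\lambda n$ of the Tur\'an value $\tfrac{n}{k-1}$; since $(k-1)\sqrt\lambda \gg 1/t$, this loss is far too large to absorb. It is recovered only through the gain $(1 + 65h(k-1)\sqrt\lambda)^{s_1}$ coming from $v$'s excess red degree, where the factor $65h$ in the hypothesis is exactly what cancels the $h$ appearing in $s_1 \ge t/(2h)$, leaving a net exponential surplus. This is the reason $v$ must be planted at the highest-pendant vertex of $H_0$, why the number of pendants there must be of order $t/h$, and why the additive slack $65h\sqrt\lambda n$ in the statement scales with $h$.
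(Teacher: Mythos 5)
Your argument is correct and follows essentially the same route as the paper's proof: restrict $N_R(v)$ to $V_B\setminus V_B'$, use \cref{lem:few-H0-in-high-deg} and \cref{lem:clique-multiplicity} to extract many red copies of $H_0$ minus its heaviest-pendant vertex, and extend through $v$ so that the gain $(1+65h(k-1)\sqrt\lambda)^{s_1}$ overwhelms the loss $(1-O((k-1)\sqrt\lambda))^{t-h-s_1}$, contradicting \eqref{eq:local-turan-ub}. The only (immaterial) deviation is that you invoke \cref{lem:clique-multiplicity} with $H_1=H_2=H_0$ and then delete the $u_1$-vertex at a cost of a factor $n$, whereas the paper applies it directly with $H_1=H_0\setminus\{u_1\}$; the resulting constant-factor loss is absorbed by the exponential surplus.
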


\begin{proof}
	Suppose for contradiction that the red degree of some vertex $v$ is at least $\frac{1}{k-1}n+65h\sqrt \lambda n$. Let $T = N_R(v) \cap V_B$. Since there are at most $19 \lambda n$ vertices outside of $V_B$, we see that $\ab T\geq \frac 1{k-1} n + 64h \sqrt \lambda n$. Recall from \cref{claim:V_B-few-high-deg} that $V_B'$ consists of those vertices in $V_B$ with blue degree at least $(1- \frac{1}{k-1}+3 \sqrt \lambda)n$, and that $\ab{V_B'} \leq 21 \sqrt \lambda \ab{V_B} \leq 21 \sqrt \lambda n$. Thus, if we let $S = T \setminus V_B'$, we see that $\ab S \geq \frac{1}{k-1}n + 64h\sqrt \lambda n - 21 \sqrt \lambda n \geq \frac{1}{k-1}n + 50 \sqrt \lambda n$, using the fact that $h \geq k \geq 4$.

	Recall that $u_1,\dots,u_h$ are the vertices of $H_0$, and that $u_i$ is incident to $s_i$ pendant edges, with $s_1 \geq \dotsb \geq s_h$. Let $H_0' = H_0 \setminus \{u_1\}$. We apply \cref{lem:clique-multiplicity} to $S$, with $H_1 = H_0'$ and $H_2 = H_0$. We conclude that $S$ contains at least $4^{-h^2}(\ab S)_h$ labeled blue copies of $H_0$, or at least $4^{-h^2}(\ab S)_{h-1}$ labeled red copies of $H_0'$. If the former happens, then the total number of labeled blue copies of $H_0$ in $S$ is at least
	\[
		4^{-h^2}(\ab S)_h \geq 4^{-h^2} (k-1)^{-h} (n)_h > \tau (n)_h,
	\]
	a contradiction to \cref{lem:few-H0-in-high-deg}, since every vertex in $S$ has blue degree at least $n-d \geq d$.

	Therefore, we may assume that $S$ contains at least $4^{-h^2}(\ab S)_{h-1}$ labeled red copies of $H_0'$. Recall that since $S \subseteq N_R(v)$, every red copy of $H_0'$ in $S$ yields a red copy of $H_0$ containing $v$. Note that since $S \subseteq V_B \setminus V_B'$, every vertex in $S$ has red degree at least $(\frac{1}{k-1}-3 \sqrt \lambda)n$. This implies that given a labeled red copy of $H_0'$ in $S$, we may extend it to a labeled red copy of $H$ in at least
	\[
		\left(\left(\frac{1}{k-1}+65h\sqrt \lambda\right)n-t\right)_{s_1} \left(\left(\frac{1}{k-1}-3\sqrt \lambda\right)n-t\right)_{s_2}\dotsb \left(\left(\frac{1}{k-1}-3\sqrt \lambda\right)n-t\right)_{s_h}
	\]
	ways. In this count, we first choose $s_1$ distinct red neighbors of $v$, then $s_2,\dots,s_h$ distinct red neighbors of the $h-1$ vertices of the fixed copy of $H_0'$ in $S$. By subtracting $t$ from each term, we can ensure that all these chosen vertices are distinct, and thus that we are truly embedding a red labeled copy of $H$. This quantity is at least
	\begin{align*}
		\bigg(\bigg(\frac{1}{k-1}&+64h\sqrt \lambda\bigg)n\bigg)_{s_1} \bigg(\bigg(\frac{1}{k-1}-4\sqrt \lambda\bigg)n\bigg)_{s_2+\dotsb+s_h} \\
		&=(1-o(1))\left(\frac{1}{k-1}+64 h\sqrt \lambda\right)^{s_1} \left(\frac{1}{k-1}-4 \sqrt \lambda\right)^{s_2+\dotsb+s_h} (n)_{t-h}\\
		&=(1-o(1)) (k-1)^{h-t} (n)_{t-h} (1+64 h(k-1)\sqrt \lambda)^{s_1}(1-4(k-1)\sqrt \lambda)^{s_2+\dotsb+s_h}\\
		&\geq (1-o(1)) (k-1)^{h-t} (n)_{t-h} \exp\left(32h(k-1)\sqrt \lambda s_1 - 8(k-1)\sqrt \lambda(s_2+\dotsb+s_h)\right),
	\end{align*}
	where we use the inequalities $1+x \geq e^{x/2}, 1-x \geq e^{-2x}$, valid for all $x \in [0,\frac 12]$ (as well as the fact that $32 hk\sqrt \lambda \leq \frac 12$, so that we may apply these bounds).
	Now, we note that $hs_1 \geq s_1+\dotsb+s_h=t-h$, which implies that $4h s_1 -(s_2+\dotsb+s_h) \geq 3hs_1 \geq t$. Putting this all together, we find that any labeled red copy of $H_0'$ in $S$ extends to at least
	\[
		(1-o(1))(k-1)^{h-t} e^{8(k-1)\sqrt \lambda t} (n)_{t-h}
	\]
	labeled red copies of $H$, where the $o(1)$ term tends to $0$ as $n \to \infty$. Recall from above that $S$ contains at least $4^{-h^2}(\ab S)_{h-1}$ labeled red copies of $H_0'$. So the total number of labeled red copies of $H$ containing $v$ is, for $n$ sufficiently large, at least
	\[
		4^{-h^2} (k-1)^{1-h} (n)_{h-1} \cdot (k-1)^{h-t} e^{8(k-1)\sqrt \lambda t} (n)_{t-h} \geq 4^{-h^2} e^{8(k-1)\sqrt \lambda t} (k-1)^{1-t} (n)_{t-1}.
	\]
	But $e^{8(k-1)\sqrt \lambda t} >4^{h^2}t$ by \cref{lemitem:8k-1}, which yields a contradiction to the bound (\ref{eq:local-turan-ub}). Thus, there is no vertex $v$ of red degree at least $(\frac{1}{k-1}+65 h \sqrt \lambda)n$.
\end{proof}

\cref{lem:every-vertex-high-blue-deg} implies that the blue graph on $K_n$ has minimum degree at least $(1- \frac{1}{k-1}-65h\sqrt \lambda)n$.
Our next claim shows that the coloring has few blue $K_k$, which will put us into a position to apply \cref{lem:stability}.

\begin{claim}\label{claim:few-Kk}
	The number of blue $K_k$ is at most $2\tau^{h^{-k}} \binom {n}{k}$. 
\end{claim}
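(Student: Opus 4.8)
The plan is to bound the number of blue $K_k$ from above by showing that a blue $K_k$ is a very efficient seed for building blue copies of $H$. Every vertex of $K_n$ has blue degree at least $(1-\frac{1}{k-1}-65h\sqrt\lambda)n$ by \cref{lem:every-vertex-high-blue-deg}, but this alone is not quite enough: extending a blue $K_k$ to a blue $H_0$, and then to a blue $H$, requires enough room, and naively each pendant-edge extension only has $\approx \frac{1}{k-1}n$ choices, so a blue $K_k$ would give roughly $(k-1)^{k-t}(n)_t$ blue copies of $H$, which is too weak (it matches, not beats, the Tur\'an bound). The key observation to exploit is that a blue $K_k$ is \emph{more} than a blue $H_0$: it has $\binom k2 \geq e(H_0)$ edges, but more importantly its vertices have large blue degree in a coordinated way. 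So the first step is: if $G$ has a blue $K_k$, count the number of blue copies of $H$ that use this $K_k$ as (a homomorphic image of) $H_0$'s vertex set. Actually the cleaner route is to not insist on using the $K_k$ as the $H_0$-skeleton, but rather to set up the opposite implication: \emph{many} blue $K_k$'s force many blue $H$'s.

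**First I would** suppose for contradiction that the number of blue $K_k$ exceeds $2\tau^{h^{-k}}\binom nk$. The goal is to derive a blue copy of $H$ count exceeding the Tur\'an bound $(k-1)^{1-t}(n)_t$ of \eqref{eq:turan-ub}. To do this, note that by a standard supersaturation/counting argument, if there are $\beta\binom nk$ blue $K_k$'s, then there are at least (roughly) $\beta^{h/k}\binom nh$ blue $K_h$'s, hence at least that many blue copies of $H_0$ — but we can do better: we want blue copies of $H_0$ all of whose vertices have high blue degree, which is automatic here since \emph{every} vertex has blue degree $\geq (1-\frac1{k-1}-65h\sqrt\lambda)n \geq d$. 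Wait — that inequality goes the wrong way for applying \cref{lem:few-H0-in-high-deg} directly as a contradiction; rather, the point is that since every vertex has blue degree $\geq d$ (as $1-\frac1{k-1}-65h\sqrt\lambda \geq \frac{1+\lambda}{k-1}$ for $k\geq 4$ and $\lambda$ small), \emph{any} blue $H_0$ extends to $\geq (d-h)_{t-h}$ blue copies of $H$, exactly as in the proof of \cref{lem:few-H0-in-high-deg}. So the contrapositive of \cref{lem:few-H0-in-high-deg} (applied with $S = V(K_n)$) says: the number of blue $H_0$ is at most $\tau(n)_h$. Therefore the number of blue $K_h$ is at most $\frac{\tau}{h!}(n)_h < \tau\binom nh \cdot h! / h!$... more simply, $\leq \tau\binom nh$ roughly. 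Now invoke the (Kruskal–Katona-type, or Erd\H os–Simonovits supersaturation, or just the standard "clique density is convex") inequality relating the number of $K_k$'s to the number of $K_h$'s: if the number of $K_h$'s is at most $\tau\binom nh$, then the number of $K_k$'s is at most $\tau^{?}\binom nk$. The exponent: a $K_k$ extends to a $K_h$ in roughly $n^{h-k}$ ways when the graph is dense, and more carefully, the clique-counting inequality gives that the $K_k$-count is at most $C\cdot(\text{$K_h$-count})^{k/h}\cdot n^{?}$... the clean bound is: if $\#K_h \leq \tau\binom nh$ then $\#K_k \leq \tau^{k/h}\binom nk \cdot (1+o(1))$, which is still not quite $\tau^{h^{-k}}$. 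Let me reconsider: iterating $K_k \to K_{k+1} \to \dots \to K_h$, each step raises the density to a power, so $\#K_k \geq (\#K_h / \binom nh)^{\text{something}} \binom nk$; going the other direction, $\#K_h \leq \tau\binom nh$ gives $\#K_k \leq \tau^{h^{-(h-k)}}\binom nk$ roughly, and since $h-k < k$ we get $\leq \tau^{h^{-k}}\binom nk$ with room to spare. So the chain is: **(1)** every vertex has blue degree $\geq d$; **(2)** hence by \cref{lem:few-H0-in-high-deg} with $S=V(K_n)$, there are $< \tau(n)_h$ blue $H_0$, so $< \tau\binom nh \cdot h!$... — actually, more carefully, $\#\{\text{blue }H_0\} = (\text{\# blue }K_h)\cdot(\text{\# embeddings of }H_0\text{ into }K_h)$, and since $H_0$ has no isolated vertices this is $\geq \#\text{blue }K_h$ (at least one embedding each), giving $\#\text{blue }K_h \leq \tau(n)_h \leq \tau h!\binom nh$; **(3)** by the Kruskal–Katona / clique-density chain, $\#\text{blue }K_k \leq (\tau h!)^{h^{-(h-k)}}\binom nk \cdot 2 \leq 2\tau^{h^{-k}}\binom nk$, using that $h!$ is absorbed into the slack (since $\tau$ is minuscule by \cref{lemitem:tau-small-stability}) and $h^{-(h-k)} \geq h^{-k}$.

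**The main obstacle** I expect is the precise form of step (3): one needs a clean, quantitatively explicit inequality of the shape "few $K_h$ $\Rightarrow$ few $K_k$" with the stated exponent, and making the constant $2$ and the exponent $h^{-k}$ work out requires care with how the $h!$ (and any other lower-order factors) get swallowed. The natural tool is the inequality that for any graph, $\frac{\#K_{j+1}}{\#K_j}$ is increasing in $j$ in an appropriate normalized sense, or equivalently a Kruskal–Katona-type bound — but one must verify the off-by-$(1-o(1))$ factors and confirm the exponent arithmetic $h^{-(h-k)} \geq h^{-k}$ (true since $h \geq k$, so $h-k \leq k$, so $h^{h-k} \leq h^k$). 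Given \cref{lemitem:tau-small-stability}, namely $2\tau^{h^{-k}} \leq k^{-10k}\varepsilon$, the resulting bound on the number of blue $K_k$ is exactly what is needed to feed into \cref{lem:stability} in the next step, so the statement is tight for the intended application and there is no more slack to give away. I would organize the proof of the claim as: assume $\geq d$ blue degree everywhere, deduce $< \tau(n)_h$ blue $H_0$ via \cref{lem:few-H0-in-high-deg}, deduce the blue $K_h$ bound, then apply the clique-counting inequality to conclude.
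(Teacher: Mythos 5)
The first two steps of your plan (every vertex has blue degree at least $d$; hence by \cref{lem:few-H0-in-high-deg} applied with $S = V(K_n)$ there are fewer than $\tau(n)_h$ labeled blue copies of $H_0$) agree with the paper and are fine. The problem is step (3), the clique-counting chain from blue $K_h$'s to blue $K_k$'s. The deduction ``few $K_h$ implies few $K_k$'' with $k < h$ is simply false, and no Kruskal--Katona or clique-density inequality supports it. Those inequalities go the other way: the normalized clique densities $\bigl(N_j/\binom{n}{j}\bigr)^{1/j}$ form a non-increasing sequence in $j$, so few $K_k$ implies few $K_h$, but an upper bound on $N_h$ imposes \emph{no} upper bound on $N_k$. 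As a concrete counterexample, the blue graph could be a balanced complete $(h-1)$-partite graph: this has zero copies of $K_h$, yet $\Theta(n^k)$ copies of $K_k$ for every $k \le h-1$, so there is no hope of bounding $N_k$ by $(\tau h!)^{h^{-(h-k)}}\binom nk$ from $N_h \le \tau h!\binom nh$. You flagged step (3) yourself as the ``main obstacle'' and tried to salvage it by noting ``a $K_k$ extends to a $K_h$ in roughly $n^{h-k}$ ways when the graph is dense,'' but this is exactly what fails: nothing here guarantees that a blue $K_k$ extends to even one blue $K_h$. (Your exponent arithmetic $h-k \le k$ is also unjustified, since $h$ can be much larger than $2k$ for a $k$-critical graph, but this is moot given the broken step.)

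The missing idea is to go from blue $K_k$'s not to blue $K_h$'s but to blue copies of a complete $k$-partite graph on $h$ vertices. Fix a proper $k$-coloring of $H_0$ with color-class sizes $a_1,\dots,a_k$ (so $a_1+\cdots+a_k=h$), and let $\mathcal K$ be the complete $k$-partite $k$-uniform hypergraph with these part sizes. Treating the family of blue $K_k$'s as a $k$-uniform hypergraph $\mathcal H$ on $V(K_n)$, Erd\H os's hypergraph supersaturation argument shows that if $\mathcal H$ has at least $2\tau^{h^{-k}}\binom nk$ edges then there are at least $(1-o(1))\bigl(2\tau^{h^{-k}}\bigr)^{a_1\cdots a_k} n^h$ homomorphisms $\mathcal K \to \mathcal H$, which using $a_1\cdots a_k \le h^k$ and a little cleanup gives at least $\tau(n)_h$ injective such homomorphisms. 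Each one produces a labeled blue copy of $K_{a_1,\dots,a_k}$, hence a labeled blue $H_0$, contradicting step (2). The crucial point is that $K_{a_1,\dots,a_k}$ is only $k$-chromatic, so it is abundant even in $K_{k+1}$-free graphs; insisting on a full blue $K_h$ as an intermediate step is a dead end. This is exactly where the exponent $h^{-k}$ comes from (it is $1/(a_1\cdots a_k) \ge h^{-k}$), and it is not an artifact of a clique chain.
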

\begin{proof}
	By \cref{lem:every-vertex-high-blue-deg}, every vertex in $K_n$ has blue degree at least $(1- \frac{1}{k-1}-65 h \sqrt \lambda)n \geq d$, since $65h \sqrt \lambda \leq \frac 14$. This implies, by \cref{lem:few-H0-in-high-deg}, that the coloring contains fewer than $\tau (n)_h$ labeled blue copies of $H_0$. 

	Suppose for contradiction that the coloring contains at least $2 \tau^{h^{-k}} \binom nk$  blue copies of $K_k$, and let $\h$ be the $k$-uniform hypergraph on $V(K_n)$ whose edges are these copies of $K_k$. Fix a proper $k$-coloring of $H_0$, let $a_1,\dots,a_k$ be the sizes of the color classes, and let $\K$ be the complete $k$-partite $k$-uniform hypergraph with parts of sizes $a_1,\dots,a_k$. An argument of Erd\H os \cite{MR183654} (see also \cite{HypergraphSidorenko}) implies that there are at least $(1-o(1))(2 \tau^{h^{-k}})^{a_1\dotsb a_k} n^{h}\geq (2-o(1)) \tau (n)_h$ homomorphisms $\K \to \h$, since $a_1\dotsb a_k \leq h^k$. If $n$ is sufficiently large, then at least two-thirds of these homomorphisms are injective, which implies that the coloring contains at least $\tau (n)_h$ labeled blue copies of a complete $k$-partite graph with parts of sizes $a_1,\dots,a_k$. But each such labeled copy contains a unique blue copy of $H_0$ with matching labels, a contradiction.
\end{proof}

We now apply \cref{lem:stability} to the blue graph on $K_n$ with parameter $\varepsilon$. We note that we may do so, since we proved that the blue graph has minimum degree at least $(1- \frac{1}{k-1}- 65h \sqrt \lambda)n$ and at most $2\tau^{h^{-k}} \binom nk$ copies of $K_k$, and we have that $2\tau^{h^{-k}} \leq k^{-10 k} \varepsilon$ by \cref{lemitem:tau-small-stability}, and $65h\sqrt\lambda \leq \min \{\frac{1}{2k^2}, \frac \varepsilon 2\}$ by our choice of $\lambda$. \cref{lem:stability} then outputs a partition of $V(K_n)$ into $k-1$ parts $V_1,\dots,V_{k-1}$ with at most $\varepsilon \binom {n}2$ internal edges among all the parts.  Moreover, we can assume without loss of generality that this partition minimizes the number of internal blue edges among all partitions of $V(K_n)$ into $k-1$ parts; in other words, we assume that $V_1,\dots,V_{k-1}$ is a max $(k-1)$-cut of the blue graph. As a max $(k-1)$-cut, the partition has the following property: every vertex has at most as many blue neighbors in its own part as in any other part. Indeed, if this were not true, we could decrease the number of internal blue edges by moving some vertex to another part, in which it has fewer blue neighbors. 

The next claim records some further properties of this partition, namely that it is close to equitable and that most of the edges between $V_i$ and $V_j$ for $i \neq j$ are blue.

\begin{claim}\label{lem:partition-properties}
	The partition $V(K_n) = V_1 \sqcup \dotsb \sqcup V_{k-1}$ has the following properties.
	\begin{enumerate}[label=(\roman*)]
		\item For each $1 \leq i \leq k-1$, we have that
		\begin{equation}\label{eq:part-size-bounds}
			\frac{n}{k-1} - \sqrt{2\varepsilon}n \leq \ab{V_i} \leq \frac{n}{k-1} + \sqrt{2\varepsilon}n.
		\end{equation}

		\item For each $1 \leq i \neq j \leq k-1$, we have that 
		\begin{equation}\label{eq:pairwise-blue-lb}
			e_B(V_i,V_j) \geq (1- k^2 \varepsilon) \ab{V_i} \ab{V_j}.
		\end{equation}
	\end{enumerate}
\end{claim}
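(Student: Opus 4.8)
\medskip
\noindent\textbf{Proof plan.}
The plan is to prove both parts by global edge-counting, using only the two pieces of structure just obtained: the output of \cref{lem:stability} says at most $\varepsilon\binom n2$ blue edges lie inside the parts $V_1,\dots,V_{k-1}$, and \cref{lem:every-vertex-high-blue-deg} says every vertex has red degree at most $\tfrac{n}{k-1}+65h\sqrt{\lambda}\,n$, so $K_n$ has at most $\tfrac n2\bigl(\tfrac{n}{k-1}+65h\sqrt{\lambda}\,n\bigr)$ red edges. The single numerical input we need is $65h\sqrt{\lambda}<\varepsilon$, which is immediate from $\lambda=\varepsilon^2/(200h)^2$ (so $\sqrt{\lambda}=\varepsilon/(200h)$); combined with $\varepsilon\ll 1/k$ this will make all error terms negligible. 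Note the max-$(k-1)$-cut property of the partition plays no role in this claim (it is used only in later claims).

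For part (i), count the $\sum_i\binom{\ab{V_i}}{2}$ pairs of vertices lying in a common part. At most $\varepsilon\binom n2$ of these are blue edges, so the remaining (at least $\sum_i\binom{\ab{V_i}}{2}-\varepsilon\binom n2$) are red edges, and this cannot exceed the total number of red edges; hence $\sum_i\binom{\ab{V_i}}{2}\leq\tfrac n2\bigl(\tfrac{n}{k-1}+65h\sqrt{\lambda}\,n\bigr)+\varepsilon\binom n2$. Writing $\ab{V_i}=\tfrac{n}{k-1}+x_i$, so that $\sum_i x_i=0$ and $\sum_i\binom{\ab{V_i}}{2}=\tfrac12\bigl(\tfrac{n^2}{k-1}+\sum_i x_i^2-n\bigr)$, this rearranges to $\sum_i x_i^2\leq 65h\sqrt{\lambda}\,n^2+\varepsilon n^2+n$, which is below $2\varepsilon n^2$ once $n$ is large. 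In particular $x_i^2<2\varepsilon n^2$ for each $i$, which is precisely \eqref{eq:part-size-bounds}.

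For part (ii), count blue edges instead. Every vertex has blue degree at least $n-1-\tfrac{n}{k-1}-65h\sqrt{\lambda}\,n$, so $K_n$ has at least $\tfrac{n^2}{2}\bigl(1-\tfrac1{k-1}-65h\sqrt{\lambda}\bigr)-\tfrac n2$ blue edges; discarding the at most $\varepsilon\binom n2$ lying inside parts leaves at least $\tfrac{n^2}{2}\bigl(1-\tfrac1{k-1}\bigr)-\tfrac{65h\sqrt{\lambda}\,n^2}{2}-\tfrac{\varepsilon n^2}{2}-\tfrac n2$ blue edges between parts. On the other hand the total number of edges between parts is $\sum_{i<j}\ab{V_i}\ab{V_j}=\tfrac12\bigl(n^2-\tfrac{n^2}{k-1}-\sum_i x_i^2\bigr)\leq\tfrac{n^2}{2}\bigl(1-\tfrac1{k-1}\bigr)$. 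Subtracting, the number of red edges between parts is at most $\tfrac{65h\sqrt{\lambda}\,n^2}{2}+\tfrac{\varepsilon n^2}{2}+\tfrac n2<\varepsilon n^2$ for $n$ large. Hence $e_R(V_i,V_j)\leq\varepsilon n^2$ for every pair $i\neq j$; combining with $\ab{V_i},\ab{V_j}\geq\tfrac{n}{k-1}-\sqrt{2\varepsilon}\,n\geq\tfrac nk$, which follows from part (i) and \cref{lemitem:equitable-error}, gives $e_R(V_i,V_j)\leq\varepsilon n^2\leq k^2\varepsilon\cdot\tfrac{n^2}{k^2}\leq k^2\varepsilon\,\ab{V_i}\ab{V_j}$, and subtracting from $\ab{V_i}\ab{V_j}$ yields \eqref{eq:pairwise-blue-lb}.

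I do not expect any genuine obstacle here: both steps are elementary first-moment computations. The only point requiring care is the bookkeeping of error terms --- checking that the $65h\sqrt{\lambda}$ contribution from the red-degree bound and the $\varepsilon$ from the stability lemma are swamped by the target tolerances $\sqrt{2\varepsilon}\,n$ in part (i) and $k^2\varepsilon\,\ab{V_i}\ab{V_j}$ in part (ii). This is exactly what the parameter hierarchy $\gamma\ll\lambda\ll\varepsilon\ll 1/k$ is engineered to guarantee, so the verification reduces to the one-line inequalities noted in the first paragraph.
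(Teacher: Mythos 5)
Your proof is correct and uses the same essential ingredients as the paper's: the $\varepsilon\binom n2$ bound on internal blue edges from \cref{lem:stability}, the degree bound from \cref{lem:every-vertex-high-blue-deg}, and the numerical fact $65h\sqrt{\lambda}<\varepsilon$. The only organizational difference is that the paper derives both parts at once from the identity $n^2=\sum_i\ab{V_i}^2+2\sum_{i<j}\ab{V_i}\ab{V_j}$, obtaining $\sum_i\bigl(\ab{V_i}-\tfrac{n}{k-1}\bigr)^2+\sum_{i<j}2\bigl(\ab{V_i}\ab{V_j}-e_B(V_i,V_j)\bigr)\leq 2\varepsilon n^2$ with all summands non-negative, whereas you prove the two parts via two separate edge counts.
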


\begin{proof}
The blue graph has minimum degree at least $(1- \frac{1}{k-1}-65h\sqrt \lambda)n \geq (1- \frac{1}{k-1}- \varepsilon)n$, and thus there are at least $(1- \frac{1}{k-1}- \varepsilon)\frac{n^2}{2}$ blue edges. This implies that
\[
	\left(1 -\frac{1}{k-1}- \varepsilon\right) \frac{n^2}{2} \leq \sum_{i=1}^{k-1} e_B(V_i) + \sum_{1 \leq i <j\leq k-1} e_B(V_i,V_j) \leq \varepsilon \frac{n^2}{2} + \sum_{1 \leq i <j\leq k-1} e_B(V_i,V_j),
\]
since there are at most $\varepsilon \binom n2 \leq \varepsilon \frac{n^2}{2}$ internal blue edges among $V_1,\dots,V_{k-1}$. Rearranging and multiplying by $2$, we find that
\begin{align*}
	2 \varepsilon n^2 &\geq \left(1- \frac{1}{k-1}\right) n^2 - 2 \sum_{1 \leq i<j\leq k-1} e_B(V_i,V_j)\\
	&=\sum_{i=1}^{k-1} \left(\ab{V_i} - \frac{n}{k-1}\right)^2 + \sum_{1 \leq i<j\leq k-1} 2\left(\ab{V_i} \ab{V_j} - e_B(V_i,V_j)\right),
\end{align*}
using the fact that $n^2 = \sum_i \ab{V_i}^2 + 2 \sum_{i<j} \ab{V_i} \ab{V_j}$.
Since each of the summands above is non-negative, we find that $(\ab{V_i}-\frac{n}{k-1})^2 \leq 2 \varepsilon n^2$ for all $i$ and that $\ab{V_i} \ab{V_j} - e_B(V_i,V_j) \leq \varepsilon n^2$ for all $i \neq j$. The former is equivalent to (\ref{eq:part-size-bounds}). The latter implies that
\[
	e_B(V_i,V_j) \geq \ab{V_i} \ab{V_j} - \varepsilon n^2 \geq \ab{V_i} \ab{V_j} - \varepsilon k^2 \ab{V_i} \ab{V_j}=(1- k^2 \varepsilon)\ab{V_i}\ab{V_j},
\]
yielding (\ref{eq:pairwise-blue-lb}), since (\ref{eq:part-size-bounds}) implies that $\ab{V_i} \geq \frac{n}{k-1}-\sqrt{2 \varepsilon}n \geq \frac{n}{k}$ by \cref{lemitem:equitable-error}.
\end{proof}
We will proceed to study the structure of the coloring with respect of $V_1,\dotsc,V_{k-1}$, to eventually conclude that each $V_i$ is monochromatic red, and that all edges between parts are blue. To begin with, the next claim shows that no vertex in $K_n$ can have considerable blue degree to each $V_i$. 

Recall that $H_0$ is $k$-critical, meaning there is an edge $uu' \in E(H_0)$ whose deletion yields a $(k-1)$-colorable graph. Let $\wt{H_0} = H_0 \setminus \{u\}$ be obtained from $H_0$ by deleting one of the endpoints of this edge, so that $\wt{H_0}$ is $(k-1)$-colorable. Fix a proper $(k-1)$-coloring of $\wt{H_0}$, and let its color classes have sizes $b_1,\dots,b_{k-1}$.
\begin{claim}\label{lem:high-red-deg}
	Fix a vertex $v \in V(K_n)$, and let $U_i = N_B(v) \cap V_i$ denote the set of blue neighbors of $v$ inside $V_i$, for $1 \leq i \leq k-1$. Then there exists some $i$ such that $\ab{U_i} < \theta \ab{V_i}$.
\end{claim}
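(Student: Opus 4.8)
The plan is to argue by contradiction: suppose $\ab{U_i}\ge\theta\ab{V_i}$ for every $i\in\{1,\dots,k-1\}$, and build enough blue copies of $H$ through $v$ to violate the local upper bound \eqref{eq:local-turan-ub}.

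First I would count labeled blue copies of $\wt{H_0}$ inside $\bigcup_i U_i$ obtained by placing the $j$-th color class of the fixed proper $(k-1)$-coloring of $\wt{H_0}$ into $U_j$. There are $P\coloneqq\prod_j\ab{U_j}^{b_j}\ge(\theta n/k)^{h-1}$ tuples to consider (recall $b_1+\dots+b_{k-1}=h-1$), using $\ab{U_j}\ge\theta\ab{V_j}\ge\theta n/k$, where the last inequality comes from \eqref{eq:part-size-bounds} and \cref{lemitem:equitable-error}. Every edge of $\wt{H_0}$ joins two distinct color classes, and for a fixed edge between classes $j$ and $j'$ the fraction of tuples on which it is red is $e_R(U_j,U_{j'})/(\ab{U_j}\ab{U_{j'}})\le e_R(V_j,V_{j'})/(\theta^2\ab{V_j}\ab{V_{j'}})\le k^2\varepsilon/\theta^2=\tfrac1{2h^2}$, using \eqref{eq:pairwise-blue-lb} and the choice $\varepsilon=\theta^2/(2h^2k^2)$. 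A union bound over the fewer than $\binom h2$ edges, together with discarding the $o(n^{h-1})$ non-injective tuples, leaves at least $\tfrac12P$ tuples that yield a labeled blue copy of $\wt{H_0}$ inside $\bigcup_i U_i$. Since every vertex of such a copy lies in $N_B(v)$, appending $v$ in the role of the deleted vertex $u$ produces a labeled blue copy of $H_0$ through $v$, and distinct tuples give distinct copies.

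Next I would extend. By \cref{lem:every-vertex-high-blue-deg} every vertex has blue degree at least $d$, so choosing blue neighbors one at a time for the $t-h$ pendant vertices extends each such copy of $H_0$ to at least $(d-t)_{t-h}=(1-o(1))\bigl(\tfrac{1+\lambda}{k-1}\bigr)^{t-h}(n)_{t-h}$ labeled blue copies of $H$ through $v$. Because $V(H)$ splits as $V(H_0)$ together with the pendant vertices, each labeled blue copy of $H$ arises from a unique pair (copy of $H_0$, extension), so there is no overcounting, and hence
\[
	m_v(H,\chi)\;\ge\;(1-o(1))\,\tfrac12\Bigl(\tfrac{\theta(k-1)}{k}\Bigr)^{h-1}(1+\lambda)^{t-h}(k-1)^{1-t}n^{t-1}.
\]
Comparing with \eqref{eq:local-turan-ub}, which gives $m_v(H,\chi)\le(1+o(1))\,t(k-1)^{1-t}n^{t-1}$, it remains to check $\tfrac12\bigl(\tfrac{\theta(k-1)}{k}\bigr)^{h-1}(1+\lambda)^{t-h}>t$. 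Since $t-h\ge t/2$ and $1+\lambda\ge2^{\lambda}$, one has $(1+\lambda)^{t-h}\ge2^{\lambda t/2}$, and the hypothesis $t\ge(1000kh)^{10}h^{10k}$ makes $\lambda t$, hence $2^{\lambda t/2}$, astronomically larger than $t\cdot(k/(\theta(k-1)))^{h-1}$; this is a routine estimate of the same flavor as \cref{lemitem:8k-1}. This contradiction proves the claim.

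The step I expect to need the most care is making sure the pendant-edge extension really gives a lower bound on $m_v(H,\chi)$ rather than an overcount; this is precisely where it is useful that the vertex labels of $H$ are fixed, so that a labeled copy of $H$ determines its $H_0$-part and its extension uniquely. The remaining ingredients are routine: transferring the blue density $1-k^2\varepsilon$ between $V_j$ and $V_{j'}$ down to the subsets $U_j,U_{j'}$ (which is exactly why one needs $\varepsilon$ small relative to $\theta^2$), and verifying the closing numerical inequality, which is comfortable because $t$ is taken enormous.
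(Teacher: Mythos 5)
Your proof is correct and follows essentially the same approach as the paper: count labeled blue copies of $\wt{H_0}$ in $\bigcup_i U_i$ via the density bound \eqref{eq:pairwise-blue-lb} and a union bound, append $v$ to get blue copies of $H_0$, extend to blue copies of $H$ through the pendant vertices, and contradict the local upper bound \eqref{eq:local-turan-ub}. The one small variation is in closing the estimate: the paper uses the stronger fact that blue degrees are at least $n/2$ to get $(n/2)_{t-h}/(d)_{t-h}\ge(5/4)^{t/2}\ge t$ and then invokes the earlier computation in \eqref{eq:basic-d-lb}, while you extend using only blue degree $\ge d$ and let the factor $(1+\lambda)^{t-h}$ absorb both the polynomial-in-$t$ loss and the $(\theta(k-1)/k)^{h-1}$ loss — a perfectly valid alternative given the enormous lower bound on $t$.
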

\begin{proof}
	Suppose for contradiction that $\ab{U_i} \geq \theta \ab{V_i}$ for all $1 \leq i \leq k-1$. By (\ref{eq:pairwise-blue-lb}), we know that $e_B(V_i,V_j) \geq (1- k^2 \varepsilon) \ab{V_i} \ab{V_j}$ for all $i \neq j$. Therefore, for all $i \neq j$, we have that
	\begin{equation}\label{eq:density-in-subsets}
		e_B(U_i,U_j) \geq \ab{U_i} \ab{U_j} - k^2 \varepsilon \ab{V_i} \ab{V_j} \geq \left( 1 - \frac{k^2 \varepsilon}{\theta^2} \right) \ab{U_i} \ab{U_j}.
	\end{equation}
	Thus, if we pick a random vertex from $U_i$ and a random vertex from $U_j$, they will be connected by a blue edge with probability at least $1- k^2 \varepsilon/\theta^2$. By the union bound, this implies that if we pick $b_i$ random vertices from $U_i$ (with replacement) for each $1 \leq i \leq k-1$, then these vertices will form a blue homomorphic image of $\wt{H_0}$ with probability at least $1 - \binom{h-1}2 k^2 \varepsilon/\theta^2 \geq 1- h^2 k^2 \varepsilon/(2\theta^2) = 3/4$, since $\varepsilon = \theta^2/(2h^2k^2)$. Additionally, if $n$ is sufficiently large, then with probability at least $3/4$, these $h-1$ vertices will all be distinct, so we will find a genuine blue copy of $\wt{H_0}$. Therefore, the number of blue labeled copies of $\wt{H_0}$ in $S \coloneqq U_1 \cup \dotsb \cup U_{k-1}$ is at least
	\begin{align*}
		\frac 12 \prod_{i=1}^{k-1} \ab{U_i}^{b_i} &\geq \frac{\theta^{h-1}}{2} \prod_{i=1}^{k-1} \ab{V_i}^{b_i} &[\ab{U_i} \geq \theta \ab{V_i}]\\
		&\geq \frac{\theta^{h-1}}{2} (1- 2k\sqrt \varepsilon)^{h-1} \left( \frac{n}{k-1} \right) ^{h-1} &[\textstyle\ab{V_i} \geq \frac{n}{k-1} - \sqrt{2 \varepsilon}n] \\
		&\geq \left( \frac{\theta}{2k} \right) ^h (n)_{h-1} &[\textstyle 2k\sqrt \varepsilon \leq \frac 12] \\
		& \geq \tau (n)_{h-1}, &[\theta \geq 2k \tau^{1/h}]
	\end{align*}
	where the final step uses \cref{lemitem:theta-2ktau}. Recall that $S \subseteq N_B(v)$, so every blue copy of $\wt{H_0}$ in $S$ yields a blue copy of $H_0$ containing $v$.
	Moreover, by \cref{lem:every-vertex-high-blue-deg}, we know that $v$ and every vertex of $S$ have at least $(1- \frac 1{k-1}-65h\sqrt \lambda)n \geq n/2$ blue neighbors. This shows that every labeled blue copy of $\wt{H_0}$ in $S$ extends to at least $(n/2)_{t-h}$ labeled blue copies of $H$ which contain $v$. Note that
	\[
		\frac{(n/2)_{t-h}}{(d)_{t-h}} = \frac{n/2}{d}\cdot \frac{n/2-1}{d-1} \dotsb \frac{n/2-t+h+1}{d-t+h+1} \geq \left(\frac{n/2}{d}\right)^{t-h} \geq \left(\frac 54\right)^{t-h} \geq \left(\frac 54\right)^{t/2} \geq t,
	\]
	where the second inequality uses that $d \leq \frac 25 n$ since $k \geq 4$, and the final inequality holds since $t \geq 100$.

	Combining the computations above, we find that the number of blue copies of $H$ containing $v$ is at least $t\tau(n)_{h-1}(d)_{t-h}$, which is at least $2t(k-1)^{1-t}(n)_{t-1}$ by the same computation as in \cref{eq:basic-d-lb}. This is a contradiction to \eqref{eq:local-turan-ub} for sufficiently large $n$.
\end{proof}
\cref{lem:high-red-deg} showed that each vertex $v$ has at least one part $V_i$ to which it has blue degree less than $\theta \ab{V_i}$. The next claim shows that in fact, there must be exactly one such part, in a strong sense: if $\ab{N_B(v) \cap V_i} < \theta \ab{V_i}$, then $v$ must have many blue neighbors in $V_j$ for all $j \neq i$.

\begin{claim}\label{lem:high-blue-deg}
	Let $v \in V(K_n)$, and let $U_i = N_B(v) \cap V_i$ denote the set of blue neighbors of $v$ in $V_i$, for $1 \leq i \leq k-1$. If $\ab{U_i} < \theta \ab{V_i}$ for some $i$, then $\ab{U_j} \geq (1-2k \theta) \ab{V_j}$ for all $j \neq i$.
\end{claim}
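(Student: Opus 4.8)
The plan is a pure degree argument, with no copy-counting needed: if $v$ had substantial red degree into two distinct parts, that alone would already violate the red-degree bound of \cref{lem:every-vertex-high-blue-deg}. Concretely, suppose for contradiction that $\ab{U_i} < \theta \ab{V_i}$ for some $i$ but $\ab{U_j} < (1-2k\theta)\ab{V_j}$ for some $j \neq i$. The first inequality says $v$ has more than $(1-\theta)\ab{V_i}$ red neighbours in $V_i$, and the second says $v$ has more than $2k\theta \ab{V_j}$ red neighbours in $V_j$ (in both cases up to a harmless $-1$ in case $v$ itself lies in $V_i$ or $V_j$, which I would absorb into the slack). Since $V_i$ and $V_j$ are disjoint, the red degree of $v$ is therefore more than $(1-\theta)\ab{V_i} + 2k\theta\ab{V_j}$.

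I would then substitute the size bounds from \cref{lem:partition-properties}: we have $\ab{V_i} \geq \frac{n}{k-1} - \sqrt{2\varepsilon}n$ and $\ab{V_j} \geq \frac nk$ (the latter being exactly the inequality already extracted in the proof of \cref{lem:partition-properties} via \cref{lemitem:equitable-error}). Using $\frac{\theta}{k-1} \leq \theta$ to get $(1-\theta)\ab{V_i} \geq \frac{n}{k-1} - \theta n - \sqrt{2\varepsilon}n$, together with $2k\theta \cdot \frac nk = 2\theta n$, the red degree of $v$ comes out to be at least
\[
	\frac{n}{k-1} + \left(\theta - \sqrt{2\varepsilon}\right)n .
\]

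To finish, I would compare this with \cref{lem:every-vertex-high-blue-deg}, which caps the red degree of any vertex at $\frac{n}{k-1} + 65h\sqrt\lambda n$. With our choice of parameters this is a comfortable contradiction: $\theta = \frac{1}{50k}$ is only polynomially small in $k$ and $h$, whereas $\sqrt{2\varepsilon} = \frac{\theta}{hk}$ and $65h\sqrt\lambda = \frac{13\varepsilon}{40}$ are both far smaller than $\theta$ (at most $\theta/2$, say), so $\theta - \sqrt{2\varepsilon} > 65h\sqrt\lambda$ and hence the red degree of $v$ strictly exceeds $\frac{n}{k-1} + 65h\sqrt\lambda n$, contradicting \cref{lem:every-vertex-high-blue-deg}. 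The only thing needing care — the closest thing here to an obstacle — is bookkeeping the three small error terms ($\sqrt{2\varepsilon}n$ from the part sizes, $\frac{\theta}{k-1}n$ from pulling $(1-\theta)\ab{V_i}$ down to $\frac{n}{k-1}$, and $65h\sqrt\lambda n$ from the degree cap) against the single net gain of $2\theta n - \theta n = \theta n$ coming from $v$'s red edges into the second part; the hierarchy $\lambda \ll \varepsilon \ll \theta$ makes all of this go through with room to spare.
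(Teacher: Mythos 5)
Your proof is correct and takes essentially the same route as the paper: both arguments combine the global red-degree cap of \cref{lem:every-vertex-high-blue-deg} with the part-size bounds of \cref{lem:partition-properties}, you merely phrase it as a contradiction (summing the red degrees into $V_i$ and $V_j$) where the paper argues directly (bounding the red degree outside $V_i$ by $2\theta n \leq 2k\theta\ab{V_j}$). The parameter bookkeeping you carry out, including $\sqrt{2\varepsilon}=\theta/(hk)$ and $65h\sqrt\lambda = 13\varepsilon/40 \ll \theta$, matches the hierarchy the paper relies on.
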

\begin{proof}
	By \cref{lem:every-vertex-high-blue-deg}, $v$ has at most $(\frac{1}{k-1}+65h\sqrt \lambda)n$ red neighbors. We also know that $v$ has at least $(1- \theta) \ab{V_i}-1$ red neighbors in $V_i$, and by (\ref{eq:part-size-bounds}),
	\[
		(1 -\theta)\ab{V_i} \geq (1- \theta)\left( \frac{1}{k-1} - \sqrt{2 \varepsilon} \right) n \geq \left( \frac{1}{k-1} - \sqrt{2 \varepsilon} - \theta \right) n.
	\]
	Hence, $v$ is incident to at most $(65h\sqrt\lambda+ \sqrt{2 \varepsilon}+\theta)n$ red edges with vertices in $V(K_n)\setminus V_i$. Since $65h\sqrt\lambda \leq \varepsilon$, $\sqrt \varepsilon \leq \theta/2$, and $\varepsilon \leq 1/3$, we have that $\lambda+\sqrt{2 \varepsilon}+\theta \leq 2 \theta$.  Since $\ab{V_j} \geq n/k$ for each $j \neq i$, we conclude that $v$ has at most $2k \theta\ab{V_j}$ red neighbors in $V_j$, as claimed.
\end{proof}

Using the previous two claims, we next show that every vertex has high red degree to its part in the partition, and high blue degree to all other parts. 
\begin{claim}\label{lem:vertices-ok}
	Let $v \in V_i$. Then $\ab{N_R(v) \cap V_i} \geq (1- \theta)\ab{V_i}$, and $\ab{N_B(v) \cap V_j} \geq (1- 2k \theta)\ab{V_j}$ for all $j \neq i$.
\end{claim}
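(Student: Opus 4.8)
The plan is to deduce this claim entirely from the three preceding claims (\cref{lem:high-red-deg}, \cref{lem:high-blue-deg}) together with the max $(k-1)$-cut property of the partition $V_1,\dots,V_{k-1}$. Fix $v\in V_i$ and write $U_j=N_B(v)\cap V_j$. First, \cref{lem:high-red-deg} hands us some index $\ell$ with $\ab{U_\ell}<\theta\ab{V_\ell}$, and then \cref{lem:high-blue-deg} upgrades this to $\ab{U_j}\geq(1-2k\theta)\ab{V_j}$ for every $j\neq\ell$. Thus the whole statement reduces to showing that $\ell=i$, i.e.\ that the unique ``blue-sparse'' part for $v$ is its own part.

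To see this, suppose for contradiction that $\ell\neq i$. Then the bound from \cref{lem:high-blue-deg} applies with $j=i$, so $\ab{U_i}\geq(1-2k\theta)\ab{V_i}$. On the other hand, since $V_1,\dots,V_{k-1}$ is a max $(k-1)$-cut of the blue graph, $v$ has at most as many blue neighbors in its own part $V_i$ as in the part $V_\ell$, so $\ab{U_i}\leq\ab{U_\ell}<\theta\ab{V_\ell}$. Plugging in the near-equitability bounds \eqref{eq:part-size-bounds} (together with $\ab{V_i}\geq n/k$ from \cref{lemitem:equitable-error} and $\ab{V_\ell}\leq\frac{n}{k-1}+\sqrt{2\varepsilon}n\leq\frac{2n}{k}$) gives $(1-2k\theta)\frac nk<\frac{2\theta n}{k}$, i.e.\ $1<(2k+2)\theta$. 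This contradicts $\theta=\frac{1}{50k}$, so in fact $\ell=i$.

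Once $\ell=i$ is established, the inequality $\ab{U_i}<\theta\ab{V_i}$ rearranges (ignoring the usual off-by-one, as elsewhere in the paper) to $\ab{N_R(v)\cap V_i}\geq(1-\theta)\ab{V_i}$, and \cref{lem:high-blue-deg} directly yields $\ab{N_B(v)\cap V_j}\geq(1-2k\theta)\ab{V_j}$ for all $j\neq i$, which are exactly the two assertions. The only real content is the comparison in the previous paragraph: it is the place where $\theta\ll\tfrac1k$ is needed (so that a part can be ``blue-sparse'' for $v$ in at most one way and the max-cut inequality actually bites), and it would be the single step that fails if $\theta$ were not taken small enough relative to $k$. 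Everything else is bookkeeping with the parameter inequalities already recorded.
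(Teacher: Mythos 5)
Your proposal is correct and takes essentially the same route as the paper: both deduce from \cref{lem:high-red-deg,lem:high-blue-deg} that there is a unique ``blue-sparse'' part, and then use the max $(k-1)$-cut property together with the near-equitability bounds \eqref{eq:part-size-bounds} to force that part to be $V_i$. The paper phrases this as a direct comparison ($(1-2k\theta)\ab{V_j}>\theta\ab{V_{i'}}$ for all $j\neq i'$) rather than a contradiction, but the arithmetic and the role of $\theta\ll 1/k$ are the same.
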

\begin{proof}
	By \cref{lem:high-red-deg,lem:high-blue-deg}, we know that there exists an index $i' \in [k-1]$ such that $\ab{N_B(v) \cap V_{i'}} \leq \theta\ab{V_{i'}}$, and that $\ab{N_B(v) \cap V_j} \geq (1- 2k \theta)\ab{V_j}$ for all $j \neq i'$. We claim that $i=i'$. 

	We recall that since we assumed $V_1,\dots,V_{k-1}$ is a max $(k-1)$-cut of the blue graph, every vertex has at most as many blue neighbors in its part as in any other part. But (\ref{eq:part-size-bounds}) implies that $(1-2k \theta) \ab{V_j} > \theta \ab{V_{i'}}$ for all $j \neq i'$, since $3k \theta < \frac 12$ and $\sqrt \varepsilon <\frac{1}{4k}$. So $v$ has fewer blue neighbors in $V_{i'}$ than in any other part, implying that $i=i'$.
\end{proof}

We now know that the coloring is ``almost'' a Tur\'an coloring with respect to the partition $V_1 \sqcup \dotsb \sqcup V_{k-1}$: most internal edges are red and most other edges are blue. The next claim demonstrates that in fact, all internal edges are red.

Recall that $uu' \in E(H_0)$ is an edge whose deletion yields a $(k-1)$-colorable graph, and let $\wh{H_0}$ be this graph. Fix a proper $(k-1)$-coloring of $\wh{H_0}$, and note that in this proper coloring, $u$ and $u'$ must receive the same color, for otherwise we could extend it to a proper $(k-1)$-coloring of $H_0$. Suppose without loss of generality $u$ and $u'$ receive color $1$, and let the sizes of the color classes be $c_1+2,c_2,\dots,c_{k-1}$, so that $\sum c_i = h-2$.
\begin{claim}\label{lem:parts-red}
	For each $1 \leq i \leq k-1$, there is no blue edge inside $V_i$.
\end{claim}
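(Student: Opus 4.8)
The plan is to argue by contradiction: suppose some part, say $V_i$, contains a blue edge $xy$. I want to show this forces too many blue copies of $H$, contradicting \eqref{eq:turan-ub} or its local version \eqref{eq:local-turan-ub}. The idea is that $x$ and $y$, together with a carefully chosen embedding of $\wh{H_0}$ using the other parts, produce a blue copy of $H_0$: since $uu'$ is an edge of $H_0$ whose deletion yields the $(k-1)$-colorable graph $\wh{H_0}$, and in the fixed proper $(k-1)$-coloring of $\wh{H_0}$ the vertices $u,u'$ get color $1$ with class sizes $c_1+2,c_2,\dots,c_{k-1}$, I can try to embed the color-$1$ class (minus $u,u'$) plus $x,y$ into $V_i$, and the other color classes into $V_1,\dots,\widehat{V_i},\dots,V_{k-1}$, using the blue edges guaranteed by \cref{lem:vertices-ok} and \eqref{eq:pairwise-blue-lb}.

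First I would set up the counting. By \cref{lem:vertices-ok}, every vertex of $V_j$ ($j\ne i$) has at least $(1-2k\theta)|V_\ell|$ blue neighbors in each $V_\ell$ with $\ell\ne j$, and every vertex of $V_i$ has at least $(1-\theta)|V_i|$ red neighbors inside $V_i$. The key extra inputs are: $x$ and $y$ are joined by a blue edge and (again by \cref{lem:vertices-ok}) each has $\ge (1-2k\theta)|V_j|$ blue neighbors in every $V_j$, $j\ne i$. So to build a blue $H_0$: place $u\mapsto x$, $u'\mapsto y$; embed the remaining $c_1$ vertices of color class $1$ into $V_i$ — wait, those would need to be blue-adjacent to each other and to $x,y$, but $V_i$ is mostly red internally. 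Instead I should embed the remaining color-$1$ vertices into $V_i$ only if they are non-adjacent in $\wh{H_0}$ to $u,u'$ and to each other; since color class $1$ is an independent set in $\wh{H_0}$, the $c_1$ extra vertices are non-adjacent to $x,y$ and to each other in $H_0$, so no blue edges among them are required — they just need to be distinct vertices of $V_i$, which is fine. The color classes $2,\dots,k-1$ get embedded into $k-2$ of the other parts, one class per part, and the required blue edges between distinct parts exist in abundance by \eqref{eq:pairwise-blue-lb} and \cref{lem:vertices-ok}. Doing this greedily/probabilistically (as in the proof of \cref{lem:high-red-deg}), the number of blue labeled copies of $H_0$ through the edge $xy$ is at least a constant (depending on $\theta,k,h$) times $n^{h-2}$.

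Then I extend each such blue $H_0$ to blue copies of $H$ by attaching the $t-h$ pendant edges. Every vertex of $K_n$ has blue degree at least $(1-\tfrac{1}{k-1}-65h\sqrt\lambda)n \ge n/2$ by \cref{lem:every-vertex-high-blue-deg}, so each blue $H_0$ extends to at least $(n/2-t)_{t-h}$ blue copies of $H$. Multiplying, the coloring contains at least $c(\theta,k,h)\,n^{h-2}\cdot (n/2)_{t-h}$ blue copies of $H$; comparing with \eqref{eq:turan-ub} and using $(n/2)^{t-h}$ versus $(k-1)^{1-t}n^t$ — since $d\le \tfrac25 n$ for $k\ge4$ gives $(n/2)/d \ge 5/4$ and $t$ is huge — this beats the Tur\'an bound, a contradiction. (Alternatively, count only blue copies of $H$ through $x$: these number at least $c\,n^{h-1}(n/2)_{t-h}$, which contradicts \eqref{eq:local-turan-ub}; this local version is cleaner and I would use it.) The main obstacle, and the place requiring care, is verifying that the greedy embedding of $\wh{H_0}$ genuinely uses only blue edges between distinct parts and only blue edges from $x,y$ — i.e.\ that one never needs a blue edge inside $V_i$ beyond the single edge $xy$ — which hinges on $u,u'$ lying in one color class of $\wh{H_0}$ and that class being independent; and bookkeeping that the error probabilities from the union bound (of order $\binom h2 k^2\varepsilon/\theta^2$ from \eqref{eq:density-in-subsets}-type estimates, plus $O(k\theta)$ terms) stay below a constant, which they do since $\varepsilon\ll\theta^2/(h^2k^2)$ and $k\theta\ll1$.
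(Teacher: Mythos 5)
Your construction of many blue copies of $H$ through the blue edge $xy$ is essentially the paper's (embed $u\mapsto x$, $u'\mapsto y$, the rest of color class $1$ into $V_i$ where no blue edges are needed since that class is independent, the other classes into the remaining parts, then extend along pendant edges using the high blue minimum degree). But the final step — deriving a contradiction from \eqref{eq:turan-ub} or \eqref{eq:local-turan-ub} — does not work, and this is a genuine gap. The number of blue copies of $H$ you produce is $\Theta(n^{t-2})$, because every one of them passes through the single edge $xy$: there are only $O(n^{h-2})$ choices for the image of $H_0\setminus\{u,u'\}$ and $O(n^{t-h})$ for the pendant vertices. This is a factor of $n^2$ below the global bound $(k-1)^{1-t}(n)_t$ and a factor of $n$ below the per-vertex bound \eqref{eq:local-turan-ub}, so neither is violated. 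Your ``cleaner'' alternative of counting blue copies through $x$ alone and getting $c\,n^{h-1}(n/2)_{t-h}$ is unjustified: since $\chi(H_0)=k$ and the blue graph between parts is nearly complete $(k-1)$-partite, every blue $H_0$ must use a blue edge inside some part, and in the situation you are trying to rule out $xy$ may be the \emph{only} such edge, so all your blue $H_0$'s through $x$ also go through $y$ and there are only $O(n^{h-2})$ of them.

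The missing idea is a local perturbation rather than a global count: recolor $xy$ red and compare the $\Omega((2k)^{-h}(3/5)^t n^{t-2})$ blue copies of $H$ thereby destroyed with the number of \emph{new red} copies of $H$ created. The latter is also $\Theta(n^{t-2})$ — copies where $xy$ is a pendant edge contribute $O(n^{h-1}\cdot(\tfrac25 n)^{t-h-1})$ and copies where $xy$ lies in $H_0$ contribute $O(h!\,n^{h-2}(\tfrac25 n)^{t-h})$, using that every vertex has few red neighbors outside its own part — and the constants compare favorably because $(3/5)^t \gg (2h)^h(2/5)^t$ for $t$ large. This contradicts the optimality of $\chi$. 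So the comparison is between two quantities of the \emph{same} order $n^{t-2}$, and the entire second half of the argument (the upper bound on newly created red copies) is absent from your proposal.
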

\begin{proof}
	By relabeling the parts, it suffices to prove this for $i=1$. So suppose that $v,w$ are two vertices in $V_1$ such that the edge $vw$ is blue. For each $j > 1$, let $W_j$ denote the common blue neighborhood of $v$ and $w$ in $V_j$, and let $W_1 = V_1 \setminus \{v,w\}$. By \cref{lem:vertices-ok}, $v$ and $w$ are each incident to at most $2k \theta\ab{V_j}$ red edges with vertices in $V_j$, and hence $\ab{W_j} \geq (1- 4k \theta) \ab{V_j} \geq \frac 12 \ab{V_j}$ for all $j$. Additionally, by the same computation as in (\ref{eq:density-in-subsets}), we see that for every $1\leq i \neq j \leq k-1$,
	\[
		e_B(W_i, W_j) \geq \ab{W_i} \ab{W_j} - k^2 \varepsilon \ab{V_i} \ab{V_j} \geq \left( 1- 4k^2 \varepsilon \right) \ab{W_i} \ab{W_j}.
	\]
	Therefore, if we pick $c_j$ random vertices (with replacement) from $W_j$ for each $1 \leq j \leq k-1$, then by the union bound, they will form a blue homomorphic copy of $\wh{H_0} \setminus \{u,u'\}$ with probability at least $1-\binom{h-2}2 4k^2 \varepsilon\geq \frac 34$. Additionally, if $n$ is sufficiently large, then all these vertices will be distinct with probability at least $\frac 34$. Thus, the edge $vw$ lies in at least $\frac 12 \prod \ab{W_j}^{c_j}$ labeled blue copies of $\wh{H_0}$. Each vertex of every such copy of $\wh{H_0}$ has blue degree at least $(1- \frac 1{k-1}-65 h\sqrt \lambda)n$, so every such blue copy of $\wh{H_0}$ can be extended to a labeled blue copy of $H$ in at least $((1- \frac 1{k-1}-65 h\sqrt \lambda)n-h)_{t-h}$ ways. So in total, the number of labeled blue copies of $H$ containing the edge $vw$ is at least
	\[
		\left(\left(1- \frac 1{k-1}-65 h\sqrt \lambda\right)n-h\right)_{t-h} \cdot \frac 12 \prod_{j=2}^{k-1} \ab{W_j}^{c_j} \geq (2k)^{-h} \left(\frac 35\right)^t n^{t-2},
	\]
	since $(1- \frac 1{k-1}-65 h\sqrt \lambda)n-h > \frac 35 n$ and $\ab{W_j} \geq \frac 12 \ab{V_j} \geq \frac{1}{2k}n$.

	Now, suppose that we define a new coloring $\chi'$ by recoloring the edge $vw$ red. We claim that doing so decreases the total number of monochromatic copies of $H$, contradicting our defining assumption on $\chi$. To prove this, we need to upper-bound the number of labeled red copies of $H$ that are created when we recolor $vw$ red. The set of such $H$ consists of those in which $vw$ is a pendant edge and those in which $vw$ is one of the edges of $H_0$. There are at most $2 n^{h-1}(\frac 25 n)^{t-h-1}$ copies of the former type; indeed, we have two choices for which of $v$ and $w$ lies in $H_0$, at most $n^{h-1}$ choices for the remaining vertices of $H_0$, and at most $\ab{V_j}+2 k \theta (n-\ab{V_j}) \leq \frac 25 n$ choices for each other pendant vertex, since \cref{lem:high-blue-deg} implies that every vertex in part $V_j$ has at most $2k \theta (n-\ab{V_j})$ red neighbors outside $V_j$. By a similar argument, there are at most $h! n^{h-2}(\frac 25 n)^{t-h}$ copies of the latter type; there are at most $n^{h-2}$ choices for the other vertices of $H_0$, at most $h!$ automorphisms of $H_0$, and at most $\frac 25 n$ choices for each pendant vertex.

	Therefore, we see that recoloring $vw$ red produces at most
	\begin{align*}
		2n^{h-1}\left( \frac 25 n\right)^{t-h-1} + h! n^{h-2}\left( \frac 25 n\right)^{t-h} < (2h)^h\left(\frac 25\right)^t n^{t-2}
	\end{align*}
	new labeled red copies of $H$. We note that since $\frac32 \geq \sqrt 2$ and $2^x > x$ for all $x \geq 0$,
	\[
		\left(\frac 32\right)^t \geq 2^{t/2} \geq 2^{4kh^2} > (4kh)^h.
	\]
	This is equivalent to $(2k)^{-h}(\frac 35)^t n^{t-2} > (2h)^h (\frac 25)^t n^{t-2}$.
	Therefore, $\chi'$ has strictly fewer monochromatic copies of $H$ than $\chi$, a contradiction.
\end{proof}

The previous claim showed that each part $V_i$ contains only red edges. In particular, we see that the blue graph is $(k-1)$-colorable, and thus there are no blue copies of $H$. The next claim shows that moreover, the edges between $V_i$ and $V_j$ must all be blue.
\begin{claim}\label{lem:across-blue}
	For every $1 \leq i \neq j \leq k-1$, all edges between $V_i$ and $V_j$ are blue.
\end{claim}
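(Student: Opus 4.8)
The plan is to mimic the recoloring argument of Claim~\ref{lem:parts-red}, but now recoloring an edge \emph{from red to blue} and showing this strictly decreases the number of monochromatic copies of $H$. Suppose for contradiction that some edge $vw$ with $v \in V_i$, $w \in V_j$, $i \neq j$, is red. By Claim~\ref{lem:parts-red} the blue graph is $(k-1)$-colorable (with respect to the partition $V_1 \sqcup \dots \sqcup V_{k-1}$), so $\chi$ currently has \emph{no} blue copies of $H$ at all; recoloring $vw$ blue cannot create any blue $H$ either, since the new blue graph is still $(k-1)$-partite with respect to the same partition (we only added an edge between two parts), and $H_0$ is not $(k-1)$-colorable. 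Hence it suffices to show that recoloring $vw$ blue destroys at least one red copy of $H$ — in fact we will show it destroys many.

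First I would lower-bound the number of red labeled copies of $H$ through the edge $vw$. The only red copies of $H$ containing $vw$ are those in which $vw$ is a pendant edge of $H$ attached at $v$ or at $w$ — a red copy cannot use $vw$ as an edge of $H_0$, because all of $H_0$'s vertices would then have to lie in a single part (as $H_0$ is connected and, by Claim~\ref{lem:parts-red}, red only lives inside parts), but $v$ and $w$ lie in different parts. So say $vw$ is a pendant edge attached to $v$; then we must embed $H_0$ (with $v$ playing the role of some $u_\ell$) entirely inside $V_i$ using red edges, and then attach the remaining pendant vertices using red edges. By Claim~\ref{lem:vertices-ok}, every vertex of $V_i$ has at least $(1-\theta)\ab{V_i}$ red neighbors inside $V_i$; using $\ab{V_i} \geq n/k$ and $t \geq (1000hk)^{10}h^{10k}$, a routine falling-factorial estimate shows this count is at least, say, $(3k)^{-t} n^{t-2}$, which is positive for large $n$. (One can be generous here: a crude bound of the form $c(k,h)^t n^{t-2}$ with $c(k,h)>0$ suffices, since we only need one such copy.)

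Next I would observe that this is already a contradiction: recoloring $vw$ from red to blue destroys all these red copies of $H$, creates no red copies (recoloring to blue never creates red copies), and creates no blue copies (by the paragraph above). Hence $\chi'$ has strictly fewer monochromatic copies of $H$ than $\chi$, contradicting optimality of $\chi$. The upshot is that no red edge can run between two parts, so all such edges are blue, completing the claim. The main subtlety — and the only place requiring care — is verifying that a red copy of $H$ through $vw$ genuinely exists, i.e.\ that one can embed $H_0$ red inside a single part and then attach the pendant vertices red: this uses Claims~\ref{lem:vertices-ok} and~\ref{lem:parts-red} to guarantee enough red structure inside each part, together with the fact that $n$ is large and $\theta$ is small. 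Everything else is essentially a one-line counting observation, far simpler than the two-sided estimate needed in Claim~\ref{lem:parts-red}, because here the recoloring direction only destroys monochromatic copies and cannot create any.
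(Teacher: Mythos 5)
Your proof is correct and follows essentially the same route as the paper's: recolor the red edge $vw$ blue, observe that this creates no blue copies of $H$ because the blue graph stays $(k-1)$-partite, and that it destroys at least one red copy of $H$ since $V_i$ is a red clique of size greater than $t$ by \cref{lem:parts-red}, contradicting optimality. One minor remark: your parenthetical claim that ``red only lives inside parts'' is not given by \cref{lem:parts-red} (it is precisely what the present claim is establishing), but this aside is superfluous to your argument, which only needs the existence of one red copy of $H$ through $vw$ rather than a characterization of all of them.
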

\begin{proof}
	Suppose for contradiction that there is a red edge $vw$, where $v \in V_i$ and $w \in V_j$. Since every edge in $V_i$ is red and $\ab{V_i} > t$ for $n$ sufficiently large, we see that $vw$ must lie in at least one red copy of $H$. However, if we recolor $vw$ blue, then it will not lie in any blue copy of $H$, since recoloring it will still maintain the property that the blue graph is $(k-1)$-colorable. This shows that recoloring $vw$ blue must strictly decrease the number of monochromatic copies of $H$, contradicting our choice of $\chi$.
\end{proof}

At this point, we've found that if the coloring $\chi$ minimizes the number of copies of $H$, then its red graph consists of a disjoint union of $k-1$ red cliques, and its blue graph is complete $(k-1)$-partite. This shows that there are no blue copies of $H$, and the number of labeled red copies of $H$ equals $\sum_{i=1}^{k-1} (\ab{V_i})_t$, since each $V_i$ spans a red clique. The function
\[
	f(x)=
	\begin{cases}
		(x)_t &x \geq t\\
		0&x<t
	\end{cases}
\]
is convex, and agrees with the function $(x)_t$ whenever $x$ is a non-negative integer. Thus, by Jensen's inequality, the quantity $\sum_{i=1}^{k-1} (\ab{V_i})_t = \sum_{i=1}^{k-1}f(\ab{V_i})$ is minimized when all the quantities $\ab{V_i}$ are as equal as possible. In other words, the unique coloring $\chi$ which minimizes the number of monochromatic copies of $H$ is the Tur\'an coloring, which completes the proof of \cref{thm:main}.
\end{proof}

\section{Three-color bonbons}\label{sec:three-colors}

In this section, we prove \cref{thm:3-color}.
Before proceeding with the proof, we briefly discuss how the three-color case differs from the two-color case, and where new ideas are needed in the proof. The first difference is that in the two-color case, we had three sets $V_R, V_B, V_{RB}$, and it was fairly easy to show that two of them must be very small (\cref{lem:V0-small,lem:V_R-or-V_B-small}). In the three-color setting, we start with seven sets, corresponding to the non-empty subsets of $\{R,B,Y\}$. As before, it is straightforward to show that four of these must be small, namely $V_R,V_B,V_Y$, and $V_{RBY}$. However, showing that two of the remaining sets are also small requires a new idea, which is where the assumption that $k$ is polite arises: we show that if two of the remaining sets are both large, then we can find many monochromatic $K_k$ by ``gluing together'' monochromatic $K_{k/2}$ in the two large sets. This step is done in \cref{claim:VRB-or-VRY-small}.

The other big difference is that in the two-color case,
we could pick the ``error parameter'' $\lambda$ to be very small with respect to $1/k$, and could thus prove \cref{lem:every-vertex-high-blue-deg} directly after \cref{claim:V_B-few-high-deg}. In the three-color setting, there are two such ``error parameters'': $\lambda$, which controls the size of $V_{RBY}$, and $\eta$, which controls the sizes of the remaining small sets. We can again ensure that $\lambda$ is very small, but $\eta$ is by necessity reasonably large---much smaller than $1/k$, but larger than $1/r(k,k)$. This means that the sets whose size is controlled by $\eta$ are not small enough for the argument of \cref{lem:every-vertex-high-blue-deg} to work. In order to get around this, we first prove that these sets are actually \emph{empty}, at which point the argument of \cref{lem:every-vertex-high-blue-deg} can go through.
% the analogous ``error parameter'' (which we call $\eta$ here) is by necessity fairly large. This means that before one can prove an analogue of \cref{lem:every-vertex-high-blue-deg}, one must first do some extra ``clean-up'': it is not enough to know that some sets like $V_R$ are small, and we must actually prove that they are empty.

We now proceed with the proof of \cref{thm:3-color}.
So we fix a sufficiently large polite integer $k$, a sufficiently large integer $t$, and let $H$ be obtained from $K_k$ by appending $t-k$ pendant edges. As the overall structure of the proof is broadly similar to that of \cref{thm:main}, but with some added difficulties, we omit or shorten the proofs that are essentially identical to those presented in \cref{sec:proof-main}. Along the same lines, we will keep less careful track of the parameters, only enforcing the hierarchy
\[
	\eta \ll \frac 1k, \qquad \gamma \ll \frac 1k, \qquad \theta \ll \frac 1k, \qquad \varepsilon \ll \theta, \qquad \lambda \ll \eta,   \qquad \lambda \ll \varepsilon,
    % , \qquad
	% \frac 1t \ll \tau
\]
as well as choosing $\tau = (1+\lambda)^{-t}$, and then ensuring that $t$ is sufficiently large so that $\tau \ll \lambda$.

Suppose we have a red/blue/yellow-coloring $\chi$ of $E(K_n)$ with the minimum number of monochromatic $H$. We wish to prove that the coloring is Ramsey-blowup-like, meaning that one of the colors spans $r(k,k)-1$ disjoint cliques whose sizes differ by at most one, and the remaining two colors contain no monochromatic $K_k$. Let $r = r(k,k)$, let $\lambda = \lambda(k)>0$, and let $d = (1+\lambda)n/(r-1)$. We recall that by the Ramsey-blowup coloring, we have
\begin{equation}\label{eq:ramsey-blowup-ub}
	m_3(H,n) \leq (r-1)^{1-t} (n)_t.
\end{equation}

We now record without proof the three-color analogues of \cref{lem:degree-regular,lem:clique-multiplicity,lem:few-H0-in-high-deg}. The proofs are identical to those given in the two-color case; the main significant observation is that any upper bound on $c_3(H)$, such as the one given by the Ramsey-blowup coloring in (\ref{eq:ramsey-blowup-ub}), yields a version of \cref{lem:few-H0-in-high-deg}, which is why the parameter $d$ defined above appears in \cref{lem:3-color-few-H0}.

\begin{lem}[Three-color analogue of \cref{lem:degree-regular}]\label{lem:3-color-degree-regular}
	For every graph $H$ on $t$ vertices, there exists a constant $C>0$ such that the following holds. For any three-coloring $\chi$ of $E(K_n)$ with the minimum number of monochromatic copies of $H$, and for any $v \in V(K_n)$,
	\[
		\left(1- \frac Cn\right)\frac tn m_3(H,n) \leq m_v(H,\chi) \leq \left(1+\frac Cn\right)\frac tn m_3(H,n),
	\]
	where $m_v(H,\chi)$ denotes the number of monochromatic labeled copies of $H$ containing $v$.
\end{lem}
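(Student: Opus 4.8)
The plan is to reprise the proof of \cref{lem:degree-regular} essentially verbatim, replacing the two-color quantity $m(H,n)$ by its three-color analogue $m_3(H,n)$ and the constant $c(H)$ by $c_3(H)$ throughout. As in that proof, by choosing the constant $C$ large enough we may assume $n$ is sufficiently large in terms of $H$. Abbreviating $m_v = m_v(H,\chi)$, the starting point is the counting identity
\[
	t \cdot m_3(H,n) = \sum_{v \in V(K_n)} m_v,
\]
valid because every monochromatic labeled copy of $H$ has exactly $t$ vertices and $\chi$ is optimal. Letting $u$ and $w$ be vertices minimizing and maximizing $m_v$, this gives $m_u \le \frac tn m_3(H,n) \le m_w$, so it suffices to prove that $m_u \ge (1-C/n)m_w$ for a suitable constant $C = C(H)$: as in \cref{lem:degree-regular}, the fact that the extreme values of $m_v$ differ by at most a factor $1-C/n$ then forces every $m_v$ to be within a factor $1 \pm 2C/n$ of their average $\frac tn m_3(H,n)$, which (after renaming $C$) is the claimed bound.

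To establish $m_u \ge (1-C/n)m_w$ I would argue by contradiction. Assuming $m_u < (1-C/n)m_w$, form $\chi'$ by deleting $w$ and replacing it with a clone $u'$ of $u$, i.e.\ $\chi'(u',x) = \chi(u,x)$ for all $x \notin \{u,u'\}$, coloring the single new edge $uu'$ arbitrarily (say red). Deleting $w$ destroys exactly $m_w$ monochromatic labeled copies of $H$; the clone $u'$ creates $m_u$ copies containing $u'$ but not $u$; and at most $(n-2)^{t-2}$ copies contain both $u$ and $u'$. Hence
\[
	m_3(H,\chi') \le m_3(H,\chi) - m_w + m_u + (n-2)^{t-2} < m_3(H,\chi) + n^{t-2} - \frac{C m_w}{n}.
\]
Since $c_3(H) > 0$ and the sequence $m_3(H,n)/(n)_t$ is non-decreasing and converges to $c_3(H)$, for $n$ large we have $m_3(H,n) \ge \frac{c_3(H)}{2} n^t$, and therefore $m_w \ge \frac tn m_3(H,n) \ge \frac{t c_3(H)}{2} n^{t-1}$. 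Taking $C = 2/(t c_3(H))$, a constant depending only on $H$, yields $C m_w/n \ge n^{t-2}$, so $m_3(H,\chi') < m_3(H,\chi)$, contradicting the optimality of $\chi$.

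I do not expect any genuine obstacle: the only place the number of colors plays a role is in choosing a color for the new edge $uu'$, and since that edge lies in $O(n^{t-2})$ labeled copies of $H$ no matter which color it gets, the choice is immaterial. The single external input---the bound $m_3(H,n) \ge \frac{c_3(H)}{2}n^t$ for large $n$---is immediate from the definition of $c_3(H)$ as the limit of the non-decreasing, positive sequence $m_3(H,n)/(n)_t$, which is recalled in the paragraph introducing $c_3(H)$.
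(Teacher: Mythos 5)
Your proof is correct and takes exactly the approach the paper has in mind: the paper itself declines to write out a proof of \cref{lem:3-color-degree-regular}, stating that it is ``identical to those given in the two-color case,'' and your argument reprises the proof of \cref{lem:degree-regular} verbatim with $m(H,n)$ and $c(H)$ replaced by $m_3(H,n)$ and $c_3(H)$, noting correctly that the color chosen for the edge $uu'$ is immaterial since that edge lies in $O(n^{t-2})$ labeled copies of $H$.
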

\begin{lem}[Three-color analogue of \cref{lem:clique-multiplicity}]\label{lem:3-color-clique-multiplicity}
	Let $H_1,H_2,H_3$ be graphs on $h_1,h_2,h_3$ vertices, respectively, and suppose that $h_1,h_2,h_3\leq h$. For any $n \geq 27^{h^2}$, any red/blue/yellow-coloring of $E(K_n)$ contains at least $27^{-h^2}(n)_{h_1}$ labeled red copies of $H_1$, or at least $27^{-h^2}(n)_{h_2}$ labeled blue copies of $H_2$, or at least $27^{-h^2}(n)_{h_3}$ labeled yellow copies of $H_3$.
\end{lem}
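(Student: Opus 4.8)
The plan is to follow the double-counting argument in the proof of \cref{lem:clique-multiplicity} essentially verbatim, with the off-diagonal two-color Ramsey number replaced by the three-color Ramsey number $r = r(h_1,h_2,h_3)$, the least $r$ such that every red/blue/yellow-coloring of $E(K_r)$ contains a red $K_{h_1}$, a blue $K_{h_2}$, or a yellow $K_{h_3}$. The first step is to record the standard multicolor generalization of the Erd\H os--Szekeres bound,
\[
	r \le \frac{(h_1+h_2+h_3-3)!}{(h_1-1)!\,(h_2-1)!\,(h_3-1)!} \le 3^{h_1+h_2+h_3-3} \le 27^{h-1},
\]
where the middle inequality is the multinomial theorem and the last uses $h_1,h_2,h_3 \le h$. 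In particular $r \le 27^{h-1} \le 27^{h^2} \le n$, so $K_n$ has $r$-subsets and the definition of $r$ applies to each of them.

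Next, by the pigeonhole principle at least one third of the $\binom{n}{r}$ $r$-subsets of $V(K_n)$ contain a red $K_{h_1}$, or at least one third contain a blue $K_{h_2}$, or at least one third contain a yellow $K_{h_3}$; by the symmetry of the colors we may assume the first. Every red $K_{h_1}$ lies in exactly $\binom{n-h_1}{r-h_1}$ of the $r$-subsets, so double counting yields at least
\[
	\frac{\tfrac13\binom{n}{r}}{\binom{n-h_1}{r-h_1}} = \frac{\binom{n}{h_1}}{3\binom{r}{h_1}} \ge \frac{1}{3r^{h_1}}\binom{n}{h_1}
\]
red copies of $K_{h_1}$, using $\binom{r}{h_1} \le r^{h_1}$. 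Since $r \le 27^{h-1}$ and $h_1 \le h$, we have $3r^{h_1} \le 3 \cdot 27^{(h-1)h} \le 27^{h^2}$, the last step because $3 \le 27^h$; hence this count is at least $27^{-h^2}\binom{n}{h_1}$. As $H_1 \subseteq K_{h_1}$, each red $K_{h_1}$ contains exactly $h_1!$ labeled red copies of $H_1$, and distinct cliques give distinct labeled copies, so there are at least $27^{-h^2}(n)_{h_1}$ labeled red copies of $H_1$. If instead the blue or yellow alternative attains the one-third share, the identical computation with the colors permuted produces at least $27^{-h^2}(n)_{h_2}$ labeled blue copies of $H_2$, or at least $27^{-h^2}(n)_{h_3}$ labeled yellow copies of $H_3$.

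I do not expect any genuine obstacle: the argument is a near-verbatim adaptation of the two-color case. The only step requiring a moment's care is estimating $r$ tightly enough that the factor of $3$ coming from the three-way pigeonhole, together with $r^{h_1}$, still fits under $27^{h^2}$ --- which is precisely why the constant is stated as $27^{-h^2}$, in parallel with the $4^{-h^2}$ appearing in \cref{lem:clique-multiplicity}.
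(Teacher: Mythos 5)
Your proof is correct and follows exactly the double-counting argument that the paper intends: the paper omits the proof of this lemma, stating only that it is identical to the two-color case, and your adaptation (multicolor Erd\H{o}s--Szekeres bound $r(h_1,h_2,h_3)\le 27^{h-1}$, three-way pigeonhole giving the factor $\tfrac13$, and the same clique-counting identity) is precisely that adaptation, with all the constants checking out.
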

\begin{rem}
	The constant $27$ appears in this statement because the best known upper bound on the three-color Ramsey number of $K_k$ is $(27-o(1))^k$. In general, the $q$-color version of such a statement would involve the constant $q^q$.
\end{rem}

\begin{lem}[Three-color analogue of \cref{lem:few-H0-in-high-deg}]\label{lem:3-color-few-H0}
Suppose $n$ is sufficiently large. Let $S\subseteq V(K_n)$ be a set of vertices, each with at least $d$ blue neighbors. Then $S$ contains fewer than $\tau(n)_k$ labeled blue copies of $K_k$.
\end{lem}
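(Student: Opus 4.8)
The plan is to transcribe the proof of \cref{lem:few-H0-in-high-deg} almost verbatim, with the core graph $H_0$ there replaced by $K_k$ (which plays the role of $H_0$ here, since $H$ is obtained from $K_k$ by appending $t-k$ pendant edges), with the quantity $k-1$ replaced throughout by $r-1 = r(k,k)-1$, and with the two-color Tur\'an bound \eqref{eq:turan-ub} replaced by the three-color Ramsey-blowup bound \eqref{eq:ramsey-blowup-ub}. Recall that here $d = (1+\lambda)n/(r-1)$ and $\tau = (1+\lambda)^{-t}$.

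I would argue by contradiction: suppose $S$ contains at least $\tau(n)_k$ labeled blue copies of $K_k$. Every vertex lying in such a copy belongs to $S$ and so has at least $d$ blue neighbors, so I can extend any such blue $K_k$ to a labeled blue copy of $H$ by embedding the $t-k$ leaves one at a time --- each leaf has at least $d$ admissible choices, and at most $t$ vertices are ever occupied --- in at least $(d-k)_{t-k}$ ways. Summing over the $\tau(n)_k$ copies of $K_k$, this exhibits at least
\[
	\tau(n)_k \cdot (d-k)_{t-k} = (1-o(1))\,\tau \left(\frac{1+\lambda}{r-1}\right)^{t-k} (n)_t
\]
labeled blue copies of $H$ in $\chi$, where $o(1) \to 0$ as $n \to \infty$.

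To finish, I would check the elementary inequality
\[
	\left(\frac{1+\lambda}{r-1}\right)^{t-k} = \left(\frac{r-1}{1+\lambda}\right)^{k-1}\left(\frac{1+\lambda}{r-1}\right)^{t-1} \geq 4(1+\lambda)^{t}(r-1)^{1-t},
\]
which, just as in the proof of \cref{lem:few-H0-in-high-deg}, follows from $k-1 \geq 3$, from $1+\lambda \leq 2$, and from $(r-1)/(1+\lambda) \geq 2$ (the last holding trivially, as $r=r(k,k)$ is large). Since $\tau(1+\lambda)^{t}=1$, this shows that for $n$ sufficiently large $\chi$ contains at least $2(r-1)^{1-t}(n)_t > m_3(H,n)$ labeled blue copies of $H$, contradicting \eqref{eq:ramsey-blowup-ub} together with the optimality of $\chi$. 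Hence $S$ contains fewer than $\tau(n)_k$ labeled blue copies of $K_k$, as claimed.

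I expect essentially no obstacle: this is a line-by-line transcription of the two-color argument, the only numerical inputs being $k \geq 4$, $\lambda \leq 1$, and $r(k,k) \geq 5$ --- all of which hold for the $k$ under consideration, with the exponential lower bound $r(k,k) \geq 2^{k/2}$ available should one want more slack in the last display.
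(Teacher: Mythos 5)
Your proposal is correct and is exactly what the paper intends: the paper states this lemma without proof, remarking that the argument is identical to the two-color case of \cref{lem:few-H0-in-high-deg} once the Ramsey-blowup bound \eqref{eq:ramsey-blowup-ub} replaces the Tur\'an bound, which is precisely the transcription you carry out. The extension count $(d-k)_{t-k}$ and the final numerical inequality both check out, so there is nothing to add.
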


In addition to these three basic lemmas, we also need the following result, which is proved by combining \cref{lem:supersaturation} and Ramsey's theorem.
\begin{lem}\label{lem:3-color-supersaturation}
    For all integers $k,\ell$, there exists $\gamma = \gamma(k,\ell)>0$ such that the following holds for all sufficiently large $n$ and any red/blue/yellow-coloring of $E(K_n)$. If there are fewer than $\frac{1}{r(k,\ell)}\binom n2$ yellow edges, then there are at least $\gamma(n)_k$ labeled blue copies of $K_k$, or at least $\gamma(n)_\ell$ labeled red copies of $K_\ell$.
\end{lem}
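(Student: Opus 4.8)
The statement to prove is \cref{lem:3-color-supersaturation}: in any red/blue/yellow coloring of $E(K_n)$ with fewer than $\frac{1}{r(k,\ell)}\binom n2$ yellow edges, there are $\gamma(n)_k$ labeled blue $K_k$ or $\gamma(n)_\ell$ labeled red $K_\ell$.

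The plan is to reduce to a two-color statement by essentially ignoring the yellow edges, and then apply the supersaturation version of Erdős–Stone–Simonovits (\cref{lem:supersaturation}) to a suitable auxiliary graph. Here is the approach in more detail.

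\begin{proof}[Proof sketch I would write]
Set $s = r(k,\ell)$, and let $H_0 = K_s$, which has chromatic number $s$. Apply \cref{lem:supersaturation} to $H_0 = K_s$ with $\delta$ some fixed small constant (say $\delta = \tfrac{1}{2s}$), obtaining a corresponding $\gamma_0 = \gamma_0(s,\delta) = \gamma(K_s,\delta) > 0$. Now let $G$ be the graph on $V(K_n)$ consisting of all edges that are \emph{not} yellow, i.e.\ all red and blue edges. Since there are fewer than $\frac 1s \binom n2$ yellow edges, $G$ has more than $(1 - \tfrac 1s)\binom n2 = (1 - \tfrac{1}{s-1} + (\tfrac{1}{s-1} - \tfrac 1s))\binom n2 \geq (1 - \tfrac{1}{s-1} + \delta)\binom n2$ edges, using $\tfrac{1}{s-1} - \tfrac 1s = \tfrac{1}{s(s-1)} \geq \delta$ for our choice of $\delta$. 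By \cref{lem:supersaturation}, $G$ contains at least $\gamma_0 (n)_s$ labeled copies of $K_s$. Equivalently, at least $\gamma_0 (n)_s / s!$ unlabeled copies, i.e.\ at least (a constant fraction of) $\binom ns$ cliques of order $s$ that use no yellow edge.

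Now fix any such $s$-clique of $G$. Its edges are all red or blue, so by the definition of $s = r(k,\ell)$ as the two-color Ramsey number, it contains either a blue $K_k$ or a red $K_\ell$. Thus each non-yellow $K_s$ contributes at least one monochromatic blue $K_k$ or red $K_\ell$. A blue $K_k$ lies in at most $\binom{n-k}{s-k}$ such $s$-cliques, and a red $K_\ell$ lies in at most $\binom{n-\ell}{s-\ell}$ such $s$-cliques; since $s$ is a fixed constant, each of these is at most $n^{s}$, say. Hence the number of (unlabeled) blue $K_k$ plus the number of (unlabeled) red $K_\ell$ is at least $\tfrac{\gamma_0}{s!}\binom ns / n^{s} \geq c \binom ns / n^s \geq c' n^{s}/n^s$—this is the wrong normalization, so instead one double-counts more carefully: the number of non-yellow $K_s$ is $\geq \tfrac{\gamma_0}{s!}(n)_s$, and each is accounted for by a blue $K_k$ (contained in $\leq (n)_{s-k}/(s-k)!$ many $K_s$'s) or a red $K_\ell$ (contained in $\leq (n)_{s-\ell}/(s-\ell)!$ many $K_s$'s). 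Therefore either the number of blue $K_k$ is at least $\tfrac 12 \cdot \tfrac{\gamma_0}{s!} (n)_s \big/ \big((n)_{s-k}/(s-k)!\big) = \Omega((n)_k)$, or the number of red $K_\ell$ is at least $\tfrac 12 \cdot \tfrac{\gamma_0}{s!} (n)_s \big/ \big((n)_{s-\ell}/(s-\ell)!\big) = \Omega((n)_\ell)$. Multiplying by $k!$ or $\ell!$ respectively to pass back to labeled copies, and letting $\gamma = \gamma(k,\ell) > 0$ be the minimum of the two resulting constants, we obtain at least $\gamma(n)_k$ labeled blue $K_k$ or at least $\gamma(n)_\ell$ labeled red $K_\ell$, as desired.
\end{proof}

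The one step requiring a little care is the double-counting at the end: one has to split into the two cases (``at least half of the non-yellow $K_s$'s yield a blue $K_k$'' versus ``at least half yield a red $K_\ell$'') and then count each type of clique with the correct multiplicity $(n)_{s-k}/(s-k)!$ or $(n)_{s-\ell}/(s-\ell)!$. Since $s$, $k$, $\ell$ are all fixed constants, all the clique-counting multiplicities are $\Theta(n^{s-k})$ or $\Theta(n^{s-\ell})$, so the bookkeeping is routine and only affects the value of the constant $\gamma$. I do not expect any genuine obstacle here; the only mildly delicate point is to check that $\tfrac{1}{s(s-1)} \geq \delta$ so that \cref{lem:supersaturation} applies with the chosen $\delta$, which is immediate once $\delta$ is taken small enough (e.g.\ $\delta = 1/s^2$).
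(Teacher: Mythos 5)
Your proof is correct and follows essentially the same route as the paper's: form the graph $G$ of non-yellow (red/blue) edges, apply \cref{lem:supersaturation} to $K_{r(k,\ell)}$ to produce $\Omega((n)_{r(k,\ell)})$ copies of $K_{r(k,\ell)}$ in $G$, use the Ramsey property to extract a blue $K_k$ or red $K_\ell$ from each, and finish with a two-case double-count. The only cosmetic quibble is your initial choice $\delta = \tfrac{1}{2s}$, for which $\tfrac{1}{s(s-1)} \geq \delta$ fails once $s > 3$; your closing remark to take $\delta = 1/s^2$ fixes this, matching the paper's $\tfrac{1}{r(k,\ell)^2}$.
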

\begin{proof}
    Let $G$ be the $n$-vertex graph whose edges are the red and blue edges of $K_n$. By assumption, the number of edges in $G$ is at least
	\[
		\left(1 - \frac{1}{r(k,\ell)}\right)\binom n2 \geq \left(1- \frac{1}{r(k,\ell)-1} + \frac{1}{r(k,\ell)^2}\right) \binom n2.
	\]
	Therefore, by \cref{lem:supersaturation}, $G$ contains at least $\gamma_0(n)_{r(k,\ell)}$ labeled copies of $K_{r(k,\ell)}$, for some $\gamma_0>0$ depending only on $k,\ell$. In the original coloring of $E(K_n)$, each such copy of $K_{r(k,\ell)}$ in $G$ contains at least one blue copy of $K_k$ or at least one red copy of $K_\ell$. Suppose first that least half of them contribute a blue copy of $K_k$. Each labeled blue copy of $K_k$ extends to a labeled copy of $K_{r(k,\ell)}$ in $G$ in at most $\frac{r(k,\ell)!}{k!}(n-k)_{r(k,\ell)-k}$ ways. So in this case, the original coloring of $K_n$ contains at least $\frac{\gamma_0}{2 r(k,\ell)!}(n)_k$ labeled blue copies of $K_k$. Similarly, if at least half the copies of $K_{r(k,\ell)}$ in $G$ contribute a red $K_\ell$, we find at least $\frac{\gamma_0}{2 r(k,\ell)!}(n)_\ell$ labeled red copies of $K_\ell$. In either case, we get the desired result, where $\gamma = \frac{\gamma_0}{2 r(k,\ell)!}>0$ depends only on $k$ and $\ell$.
\end{proof}

We also record here a result of Xu, Shao, and Radziszowski \cite{MR2801235} on the difference between consecutive Ramsey numbers. This inequality improves by an additive constant of $1$ a classical result of Burr, Erd\H os, Faudree, and Schelp \cite{MR992396}.
\begin{lem}[\cite{MR2801235}]\label{lem:consecutive-ramsey-difference}
	For any $k \geq 5$, we have
	\[
		r(k,k) \geq r(k,k-1) + 2k-2.
	\]
\end{lem}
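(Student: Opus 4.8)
Since the assertion is a lower bound on $r(k,k)$, the plan is to construct an explicit red/blue colouring of $K_N$ with $N=r(k,k-1)+2k-3$ containing no monochromatic $K_k$, which immediately gives $r(k,k)\ge N+1 = r(k,k-1)+2k-2$. I would use the standard ``critical colouring plus gadget'' method. Write $p = r(k,k-1)-1$ and take a red/blue colouring $\chi$ of $K_p$ on a vertex set $V$ with no red $K_k$ and no blue $K_{k-1}$; such $\chi$ exists by the definition of $r(k,k-1)$, but we will need to choose it (together with an auxiliary partition of $V$) carefully. To $V$ adjoin $2k-2$ new vertices forming sets $A$ and $B$ with $|A|=|B|=k-1$: colour all edges inside $A$ blue, all inside $B$ red, and colour the complete bipartite graph between $A$ and $B$ red except for one perfect matching $a_1b_1,\dots,a_{k-1}b_{k-1}$, which is blue. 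Fix a partition $V=V_1\sqcup\dots\sqcup V_{k-1}$ and, for $v\in V_i$, colour $va_i$ and all $vb_j$ with $j\ne i$ blue, and colour $vb_i$ and all $va_j$ with $j\ne i$ red.

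\textbf{Verifying the construction.} Next I would check, by a short case analysis on how a would-be monochromatic $K_k$ meets $A$, $B$ and $V$, that none exists. A red clique uses at most one vertex of $A$ (which induces a blue clique) and a red sub-clique of $B$; the matching exception between $A$ and $B$ keeps any red clique inside $A\cup B$ down to $k-1$ vertices, and the partition-structured edges between $V$ and $A\cup B$ force any red clique meeting $V$ to consist of a red clique of $\chi$ that is a \emph{transversal} of the partition, plus at most one vertex of $A$ and one of $B$. The blue case is handled symmetrically, and in fact needs no hypothesis on the partition at all, since $\chi$ has no blue $K_{k-1}$ whatsoever. Collecting the bounds, the construction has no monochromatic $K_k$ as soon as $(\chi,\{V_i\})$ satisfies a ``spreading'' condition: every red $K_{k-1}$ of $\chi$ meets every part $V_i$, and no red $K_{k-2}$ of $\chi$ is contained in a single part.

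\textbf{The main obstacle.} The real difficulty is producing a critical colouring $\chi$ and a $(k-1)$-partition of its vertices satisfying this spreading condition. One cannot simply make the red graph of $\chi$ be $(k-1)$-partite and use its colour classes: that would make the blue graph a union of $k-1$ cliques covering $V$, one of size at least $p/(k-1)$, which is far larger than $k-1$ and so contains a blue $K_{k-1}$. Thus one must arrange that the comparatively few near-maximum monochromatic cliques of a judiciously chosen $\chi$ are spread out, rather than the whole colour graph, and this is exactly where the argument has bite. I expect this to be the only genuinely delicate step; everything else is routine clique-counting, and the hypothesis $k\ge 5$ enters only through the associated inequalities (for instance that the gadget's internal cliques have the right sizes). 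The classical bound $r(k,k)\ge r(k,k-1)+2k-3$ of Burr, Erd\H os, Faudree and Schelp comes from a gadget on $2k-3$ vertices for which a weaker spreading condition suffices and is easier to secure; the improvement by one vertex due to Xu, Shao and Radziszowski squeezes out the last vertex by exploiting the absence of a red $K_k$ in $\chi$ more carefully --- e.g.\ by choosing the partition adaptively around a low-red-degree vertex of $\chi$, or by re-routing a bounded number of the edges between $V$ and $A$ --- so that the full spreading condition can be met.
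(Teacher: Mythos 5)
The paper does not prove this lemma at all: it is quoted verbatim from Xu, Shao, and Radziszowski \cite{MR2801235} as an external input (and in fact only the much weaker consequence $r(k,k)\geq r(k,k-1)+6$ is ever used). So the only question is whether your proposal stands on its own as a proof, and it does not. Your gadget and the case analysis of how a monochromatic $K_k$ could meet $A$, $B$, and $V$ are fine as far as they go (the count $N=(r(k,k-1)-1)+(2k-2)$ gives exactly the claimed bound, and the blue case genuinely needs nothing from the partition). But the entire content of the lemma is concentrated in the step you explicitly leave open: producing a $(k,k-1)$-critical colouring $\chi$ of $K_{r(k,k-1)-1}$ \emph{together with} a partition $V_1\sqcup\dotsb\sqcup V_{k-1}$ such that every red $K_{k-1}$ of $\chi$ is a transversal of the partition and no red $K_{k-2}$ lies in a single part. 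You correctly observe that the naive choice (red graph $(k-1)$-partite) is self-defeating, you note that a priori the condition looks very strong, and you then offer only speculation ("choosing the partition adaptively around a low-red-degree vertex", "re-routing a bounded number of edges") about how it might be secured. No existence argument is given, and it is not even clear that every critical $\chi$ admits such a partition, so an existence proof for at least one good pair $(\chi,\{V_i\})$ is indispensable. As written, the proposal reduces the lemma to an unproven combinatorial claim rather than proving it.

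A secondary point: since the paper supplies no proof to compare against, I cannot confirm that your gadget matches the construction of \cite{MR2801235}; the classical Burr--Erd\H{o}s--Faudree--Schelp argument and its refinement are built around a carefully chosen critical colouring, and the one-vertex improvement is precisely the delicate part. If you want a self-contained proof, the honest course is either to carry out the spreading argument in full or to cite \cite{MR2801235} as the paper does.
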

We remark that we will only use the (much) weaker result that $r(k,k) \geq r(k,k-1)+6$ for sufficiently large $k$. However, we state \cref{lem:consecutive-ramsey-difference} both because it gives the best known lower bound on $r(k,k) - r(k,k-1)$, and because the weakness of this bound shows how far we are from proving \cref{conj:assumptions}, namely that all sufficiently large integers are polite.

We are now ready to begin the proof of \cref{thm:3-color}.

\begin{proof}[Proof of \cref{thm:3-color}]
We fix a coloring $\chi$ of $E(K_n)$ with colors red, blue, and yellow, and assume that $\chi$ has the minimum number of monochromatic copies of $H$ among all such colorings.
By \cref{lem:3-color-degree-regular} and \eqref{eq:ramsey-blowup-ub}, we have that for any vertex $v \in V(K_n)$,
\begin{equation}\label{eq:local-ramsey-blowup-ub}
	m_v(H,\chi) \leq \left(1+\frac Cn\right) \frac tn m_3(H,n) \leq \left(1+\frac Cn\right)t (r-1)^{1-t} (n-1)_{t-1},
\end{equation}
where $C$ is a constant depending only on $H$.

Let
\[
	\eta= \frac{8r(k,k/2)}{r-1},
\]
and observe that for sufficiently large $k$,
\begin{equation}\label{eq:eta-lb}
	\eta\geq \frac{3k}{r-1}
\end{equation}
since $8r(k,k/2) \geq 3 k$ for all large $k$, as $r(k,k/2)$ grows exponentially. Note too that if $k$ is polite, then
\begin{equation}\label{eq:polite-eta-bound}
	\eta \leq 2^{-28}
\end{equation}
and
\begin{equation}\label{eq:assumptions-consequence}
	\frac{r-1}{r(k,k-1)} \geq 1+ 25 \left(\frac{r(k,{k/2})}{r}\right)^{1/4} > 1+12 \eta^{1/4}.
\end{equation}

For every subset $S$ of $\{R,B,Y\}$, let $V_S$ denote those vertices with degree at least $d$ in each color in $S$, but no other colors. Note that $V_\varnothing$ is empty, since $3d<n-1$ and thus every vertex has at least $d$ neighbors in at least one color. Our next three claims, which are three-color analogues of \cref{lem:V0-small,lem:V_R-or-V_B-small}, show that six of the remaining seven sets $V_S$ are small.

\begin{claim}\label{claim:VRBY-small}
	If $t$ is sufficiently large in terms of $k$, then $\ab{V_{RBY}}\leq \lambda n$.
\end{claim}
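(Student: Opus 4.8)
The plan is to mimic the proof of \cref{lem:V0-small} very closely, using the three-color clique multiplicity lemma in place of the two-color one. Suppose for contradiction that $\ab{V_{RBY}} > \lambda n$. Since $n$ is large, we may assume $\ab{V_{RBY}} \geq 27^{h^2}$ (with $h=k$ here, since $H_0 = K_k$), so that \cref{lem:3-color-clique-multiplicity} applies to the induced coloring on $V_{RBY}$ with $H_1 = H_2 = H_3 = K_k$. We conclude that, without loss of generality, $V_{RBY}$ contains at least $27^{-k^2}(\ab{V_{RBY}})_k \geq (1-o(1)) 27^{-k^2} \lambda^k (n)_k$ labeled blue copies of $K_k$. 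On the other hand, every vertex of $V_{RBY}$ has blue degree at least $d$ by definition, so \cref{lem:3-color-few-H0} gives that $V_{RBY}$ contains fewer than $\tau (n)_k$ labeled blue copies of $K_k$. Combining these two bounds yields $\tau > (1-o(1)) 27^{-k^2} \lambda^k$.

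The remaining step is to check that this contradicts our choice of parameters. Recall $\tau = (1+\lambda)^{-t}$, and $t$ is taken sufficiently large in terms of $k$ after $\lambda = \lambda(k)$ and $\eta = \eta(k)$ have been fixed. Since $1+x \geq 2^x$ for $x \in [0,1]$, we have $\tau \leq 2^{-\lambda t}$, and so choosing $t$ large enough that $\lambda t \geq k^2 \log_2 27 + k \log_2(1/\lambda) + 1$ forces $\tau \leq \tfrac12 \cdot 27^{-k^2}\lambda^k$, which contradicts the displayed inequality for $n$ large. This is precisely the qualitative dependence $\tau \ll \lambda$ recorded in the parameter hierarchy, so no new constraint on the parameters is needed beyond what has already been stipulated.

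I do not anticipate any genuine obstacle here: this claim is the direct three-color analogue of \cref{lem:V0-small}, and both ingredients it relies on (\cref{lem:3-color-clique-multiplicity} and \cref{lem:3-color-few-H0}) have already been stated. The only point requiring the slightest care is bookkeeping the constant $27^{-k^2}$ coming from \cref{lem:3-color-clique-multiplicity} rather than $4^{-h^2}$, and confirming it is still absorbed by $\tau$; since $\tau$ is super-exponentially small in $t$ while the competing quantity depends only on $k$, this is immediate once $t$ is large in terms of $k$. If anything is mildly delicate, it is simply making sure the "sufficiently large $n$" and "sufficiently large $t$" quantifiers are invoked in the right order, but this matches the structure already used throughout \cref{sec:proof-main}.
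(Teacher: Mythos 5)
Your proof is correct and is exactly what the paper intends: the paper's "proof" of this claim is literally the sentence "This is proved in the same way as \cref{lem:V0-small}," and your argument is that translation, substituting \cref{lem:3-color-clique-multiplicity} and \cref{lem:3-color-few-H0} for their two-color counterparts and checking that $\tau=(1+\lambda)^{-t}$ beats the constant $27^{-k^2}\lambda^k$ once $t$ is large in terms of $k$. The only implicit point worth noting is that the "without loss of generality blue" step relies on the color-symmetric versions of \cref{lem:3-color-few-H0}, which apply since vertices of $V_{RBY}$ have degree at least $d$ in all three colors.
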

\begin{proof}
	This is proved in the same way as \cref{lem:V0-small}.
\end{proof}

\begin{claim}
	$\ab{V_R \cup V_B \cup V_Y} \leq 3 \eta n$.
\end{claim}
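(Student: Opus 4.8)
The plan is to prove the stronger bound that $\ab{V_R}$, $\ab{V_B}$, and $\ab{V_Y}$ are each at most $\eta n$; since these three sets are pairwise disjoint (a vertex of $V_R$ has blue degree below $d$, whereas a vertex of $V_B$ has blue degree at least $d$, and likewise for the remaining pairs), adding the three bounds yields $\ab{V_R\cup V_B\cup V_Y}\le 3\eta n$. By the symmetry of the three colors it suffices to bound $\ab{V_R}$, so I would argue by contradiction, supposing $\ab{V_R}>\eta n$.

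First I would observe that a vertex of $V_R$ has blue and yellow degree each below $d$, hence red degree exceeding $n-1-2d$, and so more than $\ab{V_R}-2d$ red neighbours inside $V_R$. Since $r(k,k/2)\ge k$, the definitions $\eta=8r(k,k/2)/(r-1)$ and $d=(1+\lambda)n/(r-1)$ imply $2d<\tfrac{1}{2k}\eta n<\tfrac{1}{2(k-1)}\ab{V_R}$, so the red graph induced on $V_R$ has at least $\bigl(1-\tfrac{1}{2(k-1)}\bigr)\binom{\ab{V_R}}{2}=\bigl(1-\tfrac{1}{k-1}+\tfrac{1}{2(k-1)}\bigr)\binom{\ab{V_R}}{2}$ edges. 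Next I would invoke \cref{lem:supersaturation} with $H_0=K_k$ and $\delta=\tfrac{1}{2(k-1)}$ (applicable once $n$ is large, since then $\ab{V_R}>\eta n$ is large); this yields a constant $\gamma_0=\gamma_0(k)>0$ such that $V_R$ contains at least $\gamma_0(\ab{V_R})_k$ labelled red copies of $K_k$.

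On the other hand, every vertex of $V_R$ has red degree at least $d$, so the red-colour analogue of \cref{lem:3-color-few-H0}---whose proof is identical, extending each red $K_k$ to at least $(d-t)_{t-k}$ red copies of $H$ and comparing against the Ramsey-blowup bound \eqref{eq:ramsey-blowup-ub}---shows that $V_R$ contains fewer than $\tau(n)_k$ labelled red copies of $K_k$. Combining the two estimates gives $\gamma_0(\ab{V_R})_k<\tau(n)_k$, and since $(\ab{V_R})_k\ge(\eta n)_k\ge(1-o(1))\eta^k(n)_k$ for large $n$, this forces $\tau>(1-o(1))\gamma_0\eta^k$. As $\gamma_0$ and $\eta$ depend only on $k$, whereas $\tau=(1+\lambda)^{-t}$ can be made smaller than any prescribed positive constant depending only on $k$ by taking $t$ sufficiently large in terms of $k$, this is the desired contradiction. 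Hence $\ab{V_R}\le\eta n$, and symmetrically $\ab{V_B},\ab{V_Y}\le\eta n$.

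The step I expect to require the most care is the density estimate in the second paragraph: one must verify that, after discarding the at most $2d$ non-red edges at each vertex of $V_R$, the surviving red density on $V_R$ still lies strictly above the Tur\'an threshold $1-\tfrac{1}{k-1}$ for $K_k$, so that \cref{lem:supersaturation} applies. This is exactly where one uses that $\eta$ is a sufficiently large multiple of $1/(r-1)$---here at least $8k/(r-1)$, via $r(k,k/2)\ge k$---and everything else is a routine adaptation of the counting already done in the two-color case.
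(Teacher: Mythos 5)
Your proposal is correct and follows essentially the same route as the paper: vertices of $V_R$ have red degree at least $n-2d$, so a large $V_R$ has red density above the Tur\'an threshold, \cref{lem:supersaturation} yields $\gamma_0(\ab{V_R})_k$ red copies of $K_k$, and this contradicts the Ramsey-blowup upper bound since $\tau$ can be made smaller than any constant depending only on $k$. The only cosmetic differences are that you delegate the final extension-and-comparison step to the red analogue of \cref{lem:3-color-few-H0} rather than redoing the count inline, and you use $\delta=\tfrac{1}{2(k-1)}$ in place of the paper's $\tfrac{1}{k^2}$; neither changes the argument.
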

\begin{proof}
	Suppose for contradiction that this is false, and assume without loss of generality that $\ab{V_R}> \eta n$. Every vertex in $V_R$ has blue and yellow degrees less than $d$, and therefore has red degree at least $n-2d = (1- 2(1+\lambda)/(r-1))n > (1-3/r)n$. In particular, every vertex in $V_R$ has at least
	\[
		\ab{V_R}-\frac{3}r n > \left(1 - \frac{3}{\eta r}\right)\ab{V_R} \geq \left(1- \frac{1}{k}\right)\ab{V_R} \geq \left(1- \frac{1}{k-1}+\frac{1}{k^2}\right)\ab{V_R}
	\]
	red neighbors in $V_R$, where the second inequality follows from (\ref{eq:eta-lb}). Therefore, by \cref{lem:supersaturation}, we see that $V_R$ contains at least $\gamma (\ab{V_R})_k$ red labeled copies of $K_k$, where $\gamma$ depends only on $k$. Any such copy of $K_k$ extends to at least $(n-2d-k)_{t-k}$ red copies of $H$, since each vertex in $V_R$ has red degree at least $n-2d$, and we subtract $k$ to ensure we don't pick one of the $k$ vertices from the fixed copy of $K_k$. So in total, the number of red labeled copies of $H$ is at least
	\[
		\gamma (\ab{V_R})_k (n-2d-k)_{t-k} \geq (1-o(1)) \gamma \eta^k \left(1- \frac 3r\right)^{t-k} (n)_t \geq \gamma \eta^k 2^{k-t} (n)_t
	\]
	for $n$ sufficiently large.
	Since we chose $t$ sufficiently large relative to $k$ (and thus sufficiently large relative to $\gamma$ and $\eta$, which depend only on $k$), we have that
	\[
		\gamma \eta^k 2^{k-t} > 2t(r-1)^{1-t},
	\]
	as the left-hand side decays exponentially in $t$ with base $2$, whereas the right-hand side decays exponentially with base $r-1>2$. Thus, the number of red labeled copies of $H$ we've found is strictly more than the upper bound given in (\ref{eq:ramsey-blowup-ub}), which is the desired contradiction.
\end{proof}

At this point we've found that $V_\varnothing \cup V_R \cup V_B \cup V_Y \cup V_{RBY}$ is small, so almost all vertices lie in $V_{RB} \cup V_{RY} \cup V_{BY}$. Our next lemma shows that in fact, two of these three sets must also be small. 
\begin{claim}\label{claim:VRB-or-VRY-small}
	$\min \{\ab{V_{RB}}, \ab{V_{RY}}\} < \eta n$.
\end{claim}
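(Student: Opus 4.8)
The plan is to assume, for contradiction, that $\ab{V_{RB}} \geq \eta n$ \emph{and} $\ab{V_{RY}} \geq \eta n$ hold simultaneously, and to produce more than $(r-1)^{1-t}(n)_t$ monochromatic copies of $H$, contradicting \eqref{eq:ramsey-blowup-ub}. Write $\rho = r(k,\lceil k/2\rceil)$, so $\eta = 8\rho/(r-1)$; politeness gives $\rho \leq 2^{-31}r$, hence $\eta < 1$. First I would record three structural facts. (i) Every vertex of $V_{RB}$ has yellow degree below $d$, so the red/blue graph induced on $V_{RB}$ has minimum degree at least $\big(1 - O(1/\rho)\big)\ab{V_{RB}}$ — that is, it is nearly complete, since $\ab{V_{RB}} \geq \eta n \gg d$. (ii) Symmetrically, the red/yellow graph induced on $V_{RY}$ is nearly complete. (iii) Every blue edge between $V_{RB}$ and $V_{RY}$ is incident to $V_{RY}$, where blue degrees are below $d$, and every yellow edge between them is incident to $V_{RB}$, where yellow degrees are below $d$; hence at most a $\big(d/\ab{V_{RB}} + d/\ab{V_{RY}}\big) \leq 2d/(\eta n) = O(1/\rho)$ fraction of the edges between $V_{RB}$ and $V_{RY}$ are not red. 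Using (iii) I would then \emph{clean} $V_{RB}$, deleting the few vertices that send more than $\sqrt{O(1/\rho)}\,\ab{V_{RY}}$ non-red edges into $V_{RY}$; the resulting set $V_{RB}'$ still has $\ab{V_{RB}'} \geq \tfrac12\ab{V_{RB}}$, and now \emph{every} $\lceil k/2\rceil$-subset of $V_{RB}'$ has a common red neighborhood of size at least $\tfrac12\ab{V_{RY}}$ inside $V_{RY}$ (here one uses $\rho \gg k^2$).

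Next I would apply \cref{lem:supersaturation} with $H_0 = K_\rho$ to the red/blue graph on $V_{RB}'$: since its density is $1 - O(1/\rho)$, it exceeds the Tur\'an threshold $1 - \tfrac{1}{\rho-1}$, so there are at least $\gamma_1 (\ab{V_{RB}'})_\rho$ labeled copies of $K_\rho$ for some $\gamma_1 = \gamma_1(k) > 0$. Because $\rho = r(\lceil k/2\rceil, k)$, each such $K_\rho$, two-colored red/blue, contains a red $K_{\lceil k/2\rceil}$ or a blue $K_k$. If a constant fraction of them contain a blue $K_k$, then $V_{RB}$ contains $\Omega(n^k)$ blue copies of $K_k$; every vertex involved has blue degree at least $d$, so extending to blue copies of $H$ yields at least $\Omega\!\big(((1+\lambda)/(r-1))^{t-k}(n)_t\big)$ of them, which exceeds \eqref{eq:ramsey-blowup-ub} once $t$ is large relative to $k$. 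So we may assume a constant fraction of the $K_\rho$'s contain a red $K_{\lceil k/2\rceil}$, which (dividing by the number of $K_\rho$'s through a fixed $\lceil k/2\rceil$-set) produces $\Omega(n^{\lceil k/2\rceil})$ red copies of $K_{\lceil k/2\rceil}$ inside $V_{RB}'$.

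Now the gluing. Fix a red $K_{\lceil k/2\rceil}$ on a set $Q \subseteq V_{RB}'$, and let $Z = Z(Q) \subseteq V_{RY}$ be its common red neighborhood, so $\ab{Z} \geq \tfrac12\ab{V_{RY}} \gg d$. Inside $Z$ the blue degrees are below $d$, so the red/yellow graph on $Z$ is nearly complete, and repeating the supersaturation step — with blue and yellow interchanged, using $\rho \geq r(\lfloor k/2\rfloor,k)$ — gives either $\Omega(n^k)$ yellow copies of $K_k$ inside $V_{RY}$, a contradiction exactly as above via yellow degree at least $d$, or $\Omega(n^{\lfloor k/2\rfloor})$ red copies of $K_{\lfloor k/2\rfloor}$ inside $Z$. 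For any such red $K_{\lfloor k/2\rfloor}$ on $Y \subseteq Z$, the set $Q \cup Y$ induces a red $K_k$: both pieces are red cliques, they are disjoint since $V_{RB}' \subseteq V_{RB}$ and $V_{RB}\cap V_{RY}=\varnothing$, and all $Q$–$Y$ edges are red because $Y \subseteq Z(Q)$. The $\lceil k/2\rceil$ vertices of $Q \cup Y$ lying in $V_{RB}'$ and the $\lfloor k/2\rfloor$ lying in $V_{RY}$ are determined by the $k$-set $Q\cup Y$, so distinct pairs $(Q,Y)$ give distinct red $K_k$'s; summing over $Q$ and over $Y$ yields $\Omega(n^{\lceil k/2\rceil})\cdot\Omega(n^{\lfloor k/2\rfloor}) = \Omega(n^k)$ red copies of $K_k$, every vertex of which has red degree at least $d$. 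Extending these to red copies of $H$ again exceeds \eqref{eq:ramsey-blowup-ub} for $t$ large, the desired contradiction.

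The step I expect to be the main obstacle — and the reason for the two-stage device $Q \mapsto Z(Q)$ rather than simply locating red cliques in $V_{RB}$ and $V_{RY}$ and pairing them up — is a mismatch of scales: the supersaturation constant $\gamma_1$ is super-exponentially small in $\rho = r(k,\lceil k/2\rceil)$, whereas the "non-red density" between $V_{RB}$ and $V_{RY}$ is only of order $1/\rho$. A naive inclusion–exclusion that subtracts the bad pairs from a product of two small clique counts would therefore be swamped by the error term. Cleaning $V_{RB}$ to $V_{RB}'$ and working inside the guaranteed-large red neighborhoods $Z(Q)$ eliminates all bad pairs, so the small constant is incurred only once and is then absorbed by taking $t$ sufficiently large relative to $k$. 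One must also be a little careful that $\rho$ is large enough relative to $k$ (so that $\lceil k/2\rceil\sqrt{O(1/\rho)} < \tfrac12$) and that $\rho$ is much smaller than $r(k,k)$ (so that $\eta < 1$, i.e.\ the hypothesis is not vacuous, and so that the red/blue graph on $V_{RB}'$ is dense enough to supersaturate $K_\rho$ but $\rho$-clique Ramsey forces only a red $K_{\lceil k/2\rceil}$ or blue $K_k$); both are consequences of $k$ being large and polite.
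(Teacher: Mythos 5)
Your proposal is correct and follows essentially the same route as the paper: bound the non-red density between $V_{RB}$ and $V_{RY}$ using the degree caps $d$, clean $V_{RB}$ so that every red $K_{\lceil k/2\rceil}$ there has a large common red neighborhood in $V_{RY}$, produce many red half-cliques on each side via supersaturation plus Ramsey (ruling out the blue/yellow $K_k$ alternative by extension to monochromatic copies of $H$), and glue them into $\Omega(n^k)$ red $K_k$'s to contradict the upper bound. The only difference is cosmetic: you inline the supersaturation-plus-Ramsey step that the paper packages as \cref{lem:3-color-supersaturation}, and you use a slightly different Markov threshold in the cleaning step.
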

\begin{proof}
	Suppose for contradiction that $\ab{V_{RB}}, \ab{V_{RY}} \geq \eta n$. Note that these two sets are disjoint by definition. Let $c = \frac{1}{8k}$, and note that $\frac1{4r(k,k/2)}\leq c$ for sufficiently large $k$, since $r(k,k/2)$ grows exponentially in $k$. Since every vertex in $V_{RB}$ has at most $d$ yellow neighbors, the total number of yellow edges between $V_{RB}$ and $V_{RY}$ is at most
	\[
		d \ab{V_{RB}} \leq \frac{d}{\eta n} \ab{V_{RB}} \ab{V_{RY}} \leq \frac{2n/(r-1)}{\eta n} \ab{V_{RB}} \ab{V_{RY}} = \frac{1}{4 r(k,k/2)} \ab{V_{RB}} \ab{V_{RY}} \leq c \ab{V_{RB}} \ab{V_{RY}}.
	\]
	For the same reason, the number of blue edges between $V_{RB}$ and $V_{RY}$ is at most $d\ab{V_{RY}} \leq c \ab{V_{RB}} \ab{V_{RY}}$.
	Therefore, at least a $1-2c$ fraction of the edges between $V_{RB}$ and $V_{RY}$ are red. 

	At this point, our goal is to build many red copies of $K_k$ in $V_{RB} \cup V_{RY}$, which will yield a contradiction to \cref{lem:3-color-few-H0}. To do so, we first build many red copies of $K_{k/2}$ in $V_{RB}$. Since most of the edges between $V_{RB}$ and $V_{RY}$ are red, each such red $K_{k/2}$ can be glued to many red copies of $K_{k/2}$ in $V_{RY}$, to form the large collection of red $K_k$ in $V_{RB} \cup V_{RY}$. We now proceed with the technical details of this argument.

	Let $V_{RB}' \subseteq V_{RB}$ consist of those vertices with at least $1-4c \ab{V_{RY}}$ red neighbors in $V_{RY}$, so that $\ab{V_{RB}'}\geq \frac 12 \ab{V_{RB}}$. Recall that every vertex in $V_{RB}'$ has fewer than $d$ yellow neighbors, and thus fewer than $\frac{2d}{\eta n}\ab{V_{RB}'}$ yellow neighbors in $V_{RB}'$. We also have that
	\[
		\frac{2d}{\eta n}< \frac{4}{\eta r} = \frac{1}{2r(k,k/2)}
	\]
	by our choice of $\eta$. 
	We now apply \cref{lem:3-color-supersaturation} to the induced coloring on $V_{RB}'$ with $\ell=k/2$, which we may do since the above implies that the yellow edge density is less than $\frac 1{r(k,k/2)}$. We conclude that $V_{RB}'$ contains at least $\gamma(\ab{V_{RB}'})_k$ labeled blue $K_k$ or at least $\gamma(\ab{V_{RB}'})_{k/2}$ labeled red $K_{k/2}$, for some $\gamma>0$ depending only on $k$.
	% Let $G$ be the graph with vertex set $V_{RB}'$ whose edges are the red and blue edges of the coloring, so that the computation above shows that $G$ has edge density strictly above the Tur\'an density for the existence of $K_{r(k,k/2)}$. By \cref{lem:supersaturation}, $G$ must contain at least $\gamma_1 (\ab{V_{RB}'})_{r(k,k/2)}$ copies of $K_{r(k,k/2)}$, for some $\gamma_1>0$ depending only on $k$. 
	On the other hand, by \cref{lem:3-color-few-H0}, we see that $\ab{V_{RB}'}$ must contain fewer than $\tau (n)_k$ blue $K_k$. Recalling that we chose $t$ sufficiently large, and thus $\tau$ sufficiently small, we see that $\tau<\gamma$, implying that the former case cannot occur. So $V_{RB}'$ contains at least $\gamma (\ab{V_{RB}}')_{k/2}$ labeled red copies of $K_{k/2}$.
	% Each such blue $K_k$ in $V_{RB}'$ can be extended to a copy of $K_{r(k,k/2)}$ in $G$ in at most $(\ab{V_{RB}'})_{r(k,k/2)-k}$ ways. So the total number of $K_{r(k,k/2)}$ in $G$ containing a blue $K_k$ is at most $\tau \eta^{-r(k,k/2)} (\ab{V_{RB'}})_{r(k,k/2)}$. Since we chose $t$ sufficiently large, and thus $\tau$ sufficiently small, we have that $\tau \eta^{-r(k,k/2)}\leq \frac{\gamma_1}2$. Therefore, at least $\frac{\gamma_1}2 (\ab{V_{RB}'})_{r(k,k/2)}$ copies of $K_{r(k,k/2)}$ in $G$ contain a red copy of $K_{k/2}$. This implies that $V_{RB}'$ contains at least $\gamma_2 (\ab{V_{RB}'})$ red copies of $K_{k/2}$, for some $\gamma_2>0$ depending only on $k$.

	Fix such a red $K_{k/2}$, and call it $Q$. As every vertex of $Q$ has at least $(1-4c)\ab{V_{RY}}$ red neighbors in $V_{RY}$, the common red neighborhood of $Q$ in $V_{RY}$ has size at least $(1-4kc)\ab{V_{RY}} = \frac 12 \ab{V_{RY}}$. Let $V_{RY}' \subseteq V_{RY}$ be this common red neighborhood of $Q$. By exactly the same argument as above (but with the roles of blue and yellow swapped), we see that $V_{RY}'$ must contain at least $\gamma (\ab{V_{RY}'})_{k/2}$ red $K_{k/2}$. Since this works for every choice of $Q$, we conclude that $V_{RB} \cup V_{RY}$ contains at least $\gamma^2 (\ab{V_{RB}'})_{k/2} (\ab{V_{RY}'})_{k/2} \geq \gamma' (n)_k$ red $K_k$, for some $\gamma'>0$ depending only on $k$. As we chose $t$ sufficiently large, we have that $\tau<\gamma'$. This yields a contradiction to \cref{lem:3-color-few-H0}, since every vertex in $V_{RB} \cup V_{RY}$ has at least $d$ red neighbors.
\end{proof}

By permuting the colors, we may assume without loss of generality that $\ab{V_{RY}}, \ab{V_{BY}} \leq \eta n$. Putting together the previous three claims, we conclude that $\ab{V_{RB}} \geq (1- 6 \eta)n$. 

Before continuing to the next claim, we record some simple bounds for future convenience. First, we note that for $0 \leq x \leq \frac{1}{100}$, we have that
\[
	\frac{1}{(1+\sqrt x)(1-6 x)} < 1- x.
\]
We know that $\eta \leq \frac{1}{100}$ by (\ref{eq:polite-eta-bound}), so
\[
	\frac{1}{(1+\sqrt \eta)(1-6 \eta)} < 1- \eta = 1- \frac{8r(k,k/2)}{r-1} < 1 - \frac 1r = \frac{r-1}{r}.
\]
By rearranging, and by picking $\lambda$ sufficiently small with respect to $\eta$, we find that
\[
	\frac{1+\lambda}{(1-6 \eta)(r-1)} < \frac{1+\sqrt \eta }{r}
\]
which implies that
\begin{equation}\label{eq:d-VRB-bound}
	d = \frac{1+\lambda}{r-1}n < \frac{1+\sqrt \eta }{r} \ab{V_{RB}}
\end{equation}
since $\ab{V_{RB}} \geq (1-6 \eta)n$. 
Our next claim is the three-color analogue of \cref{claim:V_B-few-high-deg}. It says few vertices in $V_{RB}$ have low yellow degree.
\begin{claim}\label{claim:very-low-yellow-deg}
	Let $V_{RB}' \subseteq V_{RB}$ denote the set of vertices in $V_{RB}$ with yellow degree less than $\frac{1- \eta^{1/4}}{r} \ab{V_{RB}}$. Then $\ab{V_{RB}'} \leq \eta^{1/4} n$.
\end{claim}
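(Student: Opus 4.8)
The plan is to follow the proof of \cref{claim:V_B-few-high-deg}, with the two-color tension "supersaturation produces many blue $H_0$, but high blue degree forbids this" replaced by its three-color analogue. The first step is to show that $V_{RB}$ contains many yellow edges, namely
\[
	e_Y(V_{RB}) \geq \tfrac{1}{r}\binom{\ab{V_{RB}}}{2}.
\]
If this failed, then the yellow edge density inside $V_{RB}$ would be below $\tfrac{1}{r(k,k)}$, so applying \cref{lem:3-color-supersaturation} to the three-coloring induced on $V_{RB}$ with both parameters equal to $k$ would yield at least $\gamma(\ab{V_{RB}})_k$ labeled blue copies of $K_k$ inside $V_{RB}$, or at least $\gamma(\ab{V_{RB}})_k$ labeled red copies of $K_k$ inside $V_{RB}$, for some $\gamma = \gamma(k,k)>0$. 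But every vertex of $V_{RB}$ has both red and blue degree at least $d$, so by \cref{lem:3-color-few-H0} (and its analogue with red in place of blue), $V_{RB}$ contains fewer than $\tau(n)_k$ labeled copies of $K_k$ in either of these colors. Since $\ab{V_{RB}}\geq(1-6\eta)n$ and $\tau$ is sufficiently small (as $t$ is large), we have $\gamma(\ab{V_{RB}})_k>\tau(n)_k$, a contradiction.

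The second step is a double count of yellow degrees inside $V_{RB}$. Summing over $v\in V_{RB}$ the number of yellow neighbors of $v$ lying in $V_{RB}$ counts each yellow edge of $V_{RB}$ twice, hence equals $2e_Y(V_{RB})\geq \ab{V_{RB}}(\ab{V_{RB}}-1)/r$ by the first step. On the other hand, each $v\in V_{RB}'$ contributes at most its total yellow degree, which is less than $\tfrac{1-\eta^{1/4}}{r}\ab{V_{RB}}$ by definition of $V_{RB}'$, and each $v\in V_{RB}\setminus V_{RB}'$ contributes at most its total yellow degree, which is less than $d<\tfrac{1+\sqrt\eta}{r}\ab{V_{RB}}$ by \eqref{eq:d-VRB-bound}. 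Writing $p=\ab{V_{RB}'}/\ab{V_{RB}}$, combining these bounds and dividing by $\ab{V_{RB}}^2/r$ gives
\[
	1-\frac{1}{\ab{V_{RB}}} < p(1-\eta^{1/4}) + (1-p)(1+\sqrt\eta) = 1+\sqrt\eta - p\bigl(\eta^{1/4}+\sqrt\eta\bigr),
\]
so $p(\eta^{1/4}+\sqrt\eta)<\sqrt\eta+\ab{V_{RB}}^{-1}$. For $n$, hence $\ab{V_{RB}}$, sufficiently large we have $\ab{V_{RB}}^{-1}\leq \eta^{3/4}$, and since $\sqrt\eta+\eta^{3/4}=\eta^{1/4}(\eta^{1/4}+\sqrt\eta)$ this forces $p<\eta^{1/4}$. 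Therefore $\ab{V_{RB}'}=p\ab{V_{RB}}<\eta^{1/4}\ab{V_{RB}}\leq \eta^{1/4}n$, as desired.

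The main obstacle is the first step: one needs \cref{lem:3-color-supersaturation} to turn a yellow graph that is only slightly sparser than density $\tfrac{1}{r}$ into many red or blue cliques, together with the observation that this threshold $\tfrac{1}{r(k,k)}$ is exactly the yellow density forced when $V_{RB}$ is almost all of $V(K_n)$; everything after that is a routine averaging computation. It is worth noting that the sharper estimate $d<\tfrac{1+\sqrt\eta}{r}\ab{V_{RB}}$ from \eqref{eq:d-VRB-bound}, rather than the cruder $d\approx \tfrac{n}{r-1}$, is precisely what makes the exponent $1/4$ in the threshold $\tfrac{1-\eta^{1/4}}{r}\ab{V_{RB}}$ (and in the conclusion $\ab{V_{RB}'}\leq \eta^{1/4}n$) come out, so care is needed to use it in place of the weaker bound.
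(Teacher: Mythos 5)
Your proof is correct and follows essentially the same route as the paper's: first showing $e_Y(V_{RB})\geq \frac1r\binom{\ab{V_{RB}}}{2}$ via \cref{lem:3-color-supersaturation} (with $\ell=k$) against \cref{lem:3-color-few-H0}, and then double-counting yellow degrees using the definition of $V_{RB}'$ for the low-degree vertices and the bound $d<\frac{1+\sqrt\eta}{r}\ab{V_{RB}}$ from \eqref{eq:d-VRB-bound} for the rest. Your handling of the $-1$ term via $\ab{V_{RB}}^{-1}\leq\eta^{3/4}$ is in fact slightly more careful than the paper's.
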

\begin{proof}
	The proof is very similar to that of \cref{claim:V_B-few-high-deg}. Suppose first that there are fewer than $\frac 1r \binom{\ab{V_{RB}}}2$ yellow edges in $V_{RB}$. Then by applying \cref{lem:3-color-supersaturation} with $k=\ell$ to the induced coloring on $V_{RB}$, we find that $V_{RB}$ must contain at least $\gamma (\ab{V_{RB}})_k$ labeled red or blue copies of $K_k$, for some $\gamma>0$ depending only on $k$.
	% Let $G$ be the graph with vertex set $V_{RB}$ whose edges are the red and blue edges of the coloring.
	% If $G$ has average degree at least $(1- \frac{1}{r})\ab{V_{RB}}\geq (1- \frac{1}{r-1}+\frac{1}{r^2})\ab{V_{RB}}$, then by \cref{lem:supersaturation}, 
	% $G$ contains at least $\gamma_1 (\ab{V_{RB}})_{r}\geq \gamma_1 2^{-r}(n)_r$ copies of $K_r$, for some $\gamma_1>0$ depending only on $k$. But each copy of $K_r$ in $G$ contains a red $K_k$ or a blue $K_k$. Without loss of generality, we conclude that $V_{RB}$ contains at least $\gamma_2 (n)_k$ blue $K_k$ for some $\gamma_2>0$ depending only on $k$, as long as $n$ is sufficiently large. 
	But this is a contradiction to \cref{lem:3-color-few-H0} because every vertex in $V_{RB}$ has red and blue degree at least $d$, and because we chose $t$ sufficiently large, and thus $\tau<\gamma$. 

	Therefore, the number of yellow edges in $V_{RB}$ is at least $\frac{1}{r}\binom{\ab{V_{RB}}}2 = \frac{\ab{V_{RB}}}{2r}(\ab{V_{RB}}-1)$. Every vertex in $V_{RB}$ has at most $d$ yellow neighbors in $V_{RB}$, and every vertex in $V_{RB}'$ has fewer than $\frac{1- \eta^{1/4}}{r}\ab{V_{RB}}$ yellow neighbors in $V_{RB}$. Summing this up over all vertices in $V_{RB}$, we see that the total number of yellow edges in $V_{RB}$ is at most
	\[
		\frac12 \left(\sum_{v \in V_{RB}'} \frac{1-\eta^{1/4}}{r}\ab{V_{RB}} +\hspace{-10pt} \sum_{v \in V_{RB} \setminus V_{RB}'} d\right) < \frac {\ab{V_{RB}}}{2r} \left(({1-\eta^{1/4}})\ab{V_{RB}'} + ({1+\sqrt \eta}) (\ab{V_{RB}}-\ab{V_{RB}'})\right),
	\]
	where we use (\ref{eq:d-VRB-bound}).
	Comparing our lower and upper bounds for the number of yellow edges in $V_{RB}$, we find that
	\[
		\ab{V_{RB}}-1 < (1+\sqrt \eta) \ab{V_{RB}} - (\eta^{1/4}+\sqrt \eta)\ab{V_{RB}'} \leq (1+\sqrt \eta) \ab{V_{RB}} - \eta^{1/4} \ab{V_{RB}'},
	\]
	which implies that
	\[
		\ab{V_{RB}'} \leq \frac{\sqrt \eta}{\eta^{1/4}}\ab{V_{RB}} \leq {\eta^{1/4}} n.\qedhere
	\]
\end{proof}

Our next claim does not have a direct analogue in the two-color setting, and this is because of an important difference between the two cases discussed at the beginning of \cref{sec:three-colors}. Namely, in the case of two colors, we picked $\lambda$ to be very small with respect to $\frac 1k$, and could thus prove \cref{lem:every-vertex-high-blue-deg} directly after \cref{claim:V_B-few-high-deg} (whose analogue we have just proven). In the three-color setting, $\lambda$ is still very small, but $\eta$ is actually fairly large compared to $\frac 1r$. So if we try to mimic the proof of \cref{lem:every-vertex-high-blue-deg} using the information we have so far, we will not be able to deduce a contradiction, because the vertices in $V_R \cup V_B \cup V_Y \cup V_{RY} \cup V_{BY}$ actually may have large yellow degree.

We do eventually prove a three-color analogue of \cref{lem:every-vertex-high-blue-deg}, in \cref{claim:no-vtx-high-yellow-deg}. But in order for the proof to work, we first show that all the sets $V_R, V_B, V_Y, V_{RY}, V_{BY}$ are empty, which is the content of the next three claims. The first says that no vertex has many yellow neighbors in $V_{RB}$. 
Recall that $s_1 \geq \dotsb \geq s_k$ denote the number of pendant edges on the vertices of $K_k$ in $H$.

\begin{claim}\label{claim:few-yellow-nbrs-in-VRB}
	Every vertex has at most $2\eta^{1/4} n$ yellow neighbors in $V_{RB}$. 
\end{claim}
\begin{proof}
	Suppose for contradiction that some $v \in V(K_n)$ has at least $2\eta^{1/4}n$ yellow neighbors in $V_{RB}$, and let $S = N_Y(v) \cap V_{RB}$ be the set of these yellow neighbors. Let $T = S \setminus V_{RB}'$, where $V_{RB}'$ is the set of vertices with fewer than $\eta^{1/4} n$ yellow neighbors, as in \cref{claim:very-low-yellow-deg}. By \cref{claim:very-low-yellow-deg}, we see that $\ab T \geq 2 \eta^{1/4}n - \eta^{1/4} n = \eta^{1/4}n$. Since $T \subseteq V_{RB}$, \cref{lem:3-color-few-H0} implies that $T$ contains fewer than $\tau (n)_k$ red $K_k$ and fewer than $\tau (n)_k$ blue $K_k$. Since we chose $t$ sufficiently large so that $\tau (n)_k \leq 27^{-k^2}(\ab T)_k$, we conclude from \cref{lem:3-color-clique-multiplicity} that $T$ contains at least $27^{-k^2}(\ab T)_{k-1}$ labeled yellow copies of $K_{k-1}$.  Note that $(\ab T)_{k-1} \geq (1-o(1)) \eta^{(k-1)/4} (n)_{k-1} \geq \eta^{k/4}(n)_{k-1}$ for $n$ sufficiently large. Thus, for sufficiently large $n$, we find that $T$ contains at least $27^{-k^2} \eta^{k/4} (n)_{k-1}$ labeled yellow copies of $K_{k-1}$. Fix any such copy of $K_{k-1}$ in $T$. It can be extended to a labeled yellow copy of $H$ in at least
	\[
		(2 \eta^{1/4}n - k)_{s_1} \left(\frac{1-\eta^{1/4}}{r}n-k\right)_{s_2+\dotsb+s_k}
	\]
	ways, since $v$ has at least $2 \eta^{1/4}n - k$ yellow neighbors besides the $k$ vertices already chosen, and every other vertex of the $K_k$ lies in $V_{RB} \setminus V_{RB}'$, and hence has at least $\frac{1-\eta^{1/4}}{r}n$ yellow neighbors. Note that $2 \eta^{1/4} > \frac 1r + \eta^{1/4}$, and so the expression above is at least
	\[
		(1-o(1))r^{k-t} (r \eta^{1/4})^{s_1} (1- \eta^{1/4})^{s_2+\dotsb+s_k} (n)_{t-k}.
	\]
	Since $r \geq 4$, we have that $r \eta^{1/4} \geq r (r-1)^{-1/4} \geq \sqrt r \geq 2^{k/4}$, using the bound $r \geq 2^{k/2}$ due to Erd\H os \cite{MR19911}. Additionally, by (\ref{eq:polite-eta-bound}), we have that $\eta \leq 2^{-20}= 32^{-4}$, so that $1- \eta^{1/4} \geq 1- \frac 1{32} \geq 2^{-1/16}$. Putting this all together, we see that
	\[
		(r \eta^{1/4})^{s_1} (1- \eta^{1/4})^{s_2+\dotsb+s_k} \geq 2^{\frac{ks_1}{4}} \cdot 2^{- \frac{s_2+\dotsb+s_k}{16}} = 2^{\frac{1}{16}(4ks_1 - s_2 - \dotsb s_k)} \geq 2^{t/32},
	\]
	using the fact that $ks_1 \geq s_1 + \dotsb + s_k = t-k$, so that $4ks_1 - (s_2 + \dotsb + s_k) \geq 3ks_1 \geq t$.

	Now, we recall that $T$ contains at least $27^{-k^2} \eta^{k/4} (n)_{k-1}$ labeled yellow copies of $K_{k-1}$, so in total, the number of yellow labeled copies of $H$ containing $v$ is at least
	\[
		(1-o(1)) 27^{-k^2} \eta^{k/4} r^{k-t} 2^{t/32} (n)_{t-1},
	\]
	which contradicts  the bound (\ref{eq:local-ramsey-blowup-ub}) for large $n$, since we chose $t$ sufficiently large with respect to $k$.
\end{proof}

As a simple corollary of the previous claim, we can conclude that $V_Y$ is empty.
\begin{claim}\label{claim:VY-empty}
	$V_Y= \varnothing$.
\end{claim}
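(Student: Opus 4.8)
The plan is to read this off directly from Claim~\ref{claim:few-yellow-nbrs-in-VRB} together with the fact, established above, that $V_{RB}$ contains all but at most $6\eta n$ vertices of $K_n$. Suppose for contradiction that some vertex $v$ lies in $V_Y$. The first step is a lower bound on the yellow degree of $v$: by the definition of $V_Y$, the red and blue degrees of $v$ are each less than $d$, so
\[
	\deg_Y(v) = (n-1) - \deg_R(v) - \deg_B(v) \geq n + 1 - 2d.
\]
Since $r = r(k,k)$ is large, $d = (1+\lambda)n/(r-1)$ is a tiny fraction of $n$, and hence $\deg_Y(v) \geq \frac{1}{2} n$ for $n$ sufficiently large.

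The second step is a contradicting upper bound. Every yellow neighbor of $v$ either lies in $V_{RB}$ or it does not; by Claim~\ref{claim:few-yellow-nbrs-in-VRB} there are at most $2\eta^{1/4}n$ of the former, and since $\ab{V_{RB}} \geq (1-6\eta)n$ there are at most $6\eta n$ of the latter. As $k$ is polite we have $\eta \leq 2^{-28}$ by (\ref{eq:polite-eta-bound}), so $6\eta \leq \eta^{1/4}$ and $\eta^{1/4} \leq 2^{-7}$, giving $\deg_Y(v) \leq 3\eta^{1/4}n < \frac{1}{2}n$. This contradicts the first step, so $V_Y = \varnothing$.

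There is no real obstacle here; the only point requiring a moment's care is that the two degree bounds genuinely conflict, which comes down to the crude facts that $d = o(n)$ (because $r$ is large) and that $3\eta^{1/4}$ is bounded well below $1$ (because politeness forces $\eta \leq 2^{-28}$).
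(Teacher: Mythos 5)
Your proof is correct and takes essentially the same approach as the paper: the paper counts the neighbors of $v$ inside $V_{RB}$ (at most $d$ red, at most $d$ blue, at most $2\eta^{1/4}n$ yellow) and contrasts this with $|V_{RB}|\geq(1-6\eta)n$, which is the same inequality you derive after rearranging your two bounds on $\deg_Y(v)$.
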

\begin{proof}
	Suppose that there is some vertex $v \in V_Y$. Since $v$ has fewer than $d$ red and fewer than $d$ blue neighbors, and fewer than $2 \eta^{1/4}n$ yellow neighbors in $V_{RB}$ by \cref{claim:few-yellow-nbrs-in-VRB}, we have that
	\[
		2d + 2\eta^{1/4}n > \ab{V_{RB}} \geq (1-6 \eta)n \geq \frac n2 \geq 2d+ 2 \eta^{1/4} n
	\]
	a contradiction, where we have that $\frac{1+\lambda}{r-1}+ \eta^{1/4} < \frac 14$ for $k$ sufficiently large, because of (\ref{eq:polite-eta-bound}).
\end{proof}

Similarly, our next claim shows that every remaining set $V_S$ is empty, with the exception of $V_{RB}$ and $V_{RBY}$.

\begin{claim}\label{claim:no-r-or-b-sets}
	$V_R = V_B = V_{RY} = V_{BY} =  \varnothing$.
\end{claim}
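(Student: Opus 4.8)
\emph{Proof plan.} The four sets can be handled simultaneously, exploiting the symmetry of everything proved so far under swapping the colours red and blue, so that it is enough to deal with $V_R$ and $V_{RY}$. The first step is a reduction. A vertex $v\in V_{RY}$ has at most $2\eta^{1/4}n$ yellow neighbours inside $V_{RB}$ by \cref{claim:few-yellow-nbrs-in-VRB} and at most $6\eta n$ yellow neighbours outside $V_{RB}$, so its yellow degree is at most $3\eta^{1/4}n$; together with $\ab{N_B(v)}<d$ this forces its red degree to be at least $n-1-d-3\eta^{1/4}n\ge(1-2^{-4})n$, using \eqref{eq:polite-eta-bound}. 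A vertex of $V_R$ likewise has red degree at least $n-1-2d\ge(1-2^{-4})n$. Hence it suffices to derive a contradiction from the existence of a vertex $v$ with red degree at least $(1-2^{-4})n$ and blue degree less than $d$.

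Given such a $v$, set $S\coloneqq N_R(v)\cap V_{RB}$. Since $V_{RB}\setminus S$ is contained in $\{v\}$ together with the blue and yellow neighbours of $v$, and $\ab{V_{RB}}\ge(1-6\eta)n$, we get $\ab S\ge (1-6\eta)n-1-d-3\eta^{1/4}n\ge n/2$. Now apply \cref{lem:3-color-clique-multiplicity} to $S$ with $H_1=K_{k-1}$ and $H_2=H_3=K_k$: either (a) $S$ contains at least $27^{-k^2}(\ab S)_{k-1}$ labeled red $K_{k-1}$, or (b) at least $27^{-k^2}(\ab S)_k$ labeled blue $K_k$, or (c) at least $27^{-k^2}(\ab S)_k$ labeled yellow $K_k$. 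Case (b) is impossible because every vertex of $S\subseteq V_{RB}$ has blue degree at least $d$, so the blue version of \cref{lem:3-color-few-H0} gives fewer than $\tau(n)_k<27^{-k^2}(\ab S)_k$ blue $K_k$ in $S$, the last inequality holding since $t$ (hence $\tau$) was chosen sufficiently small. In case (a), each red $K_{k-1}$ together with $v$ is a red $K_k$ through $v$, because $S\subseteq N_R(v)$; extending it to a red copy of $H$ by placing the vertex of the $K_k$ that carries the most pendant edges (namely $s_1$ of them) at $v$, which has red degree at least $(1-2^{-4})n$, and the remaining $k-1$ vertices inside $S\subseteq V_{RB}$, each of red degree at least $d$, produces at least $27^{-k^2}(\ab S)_{k-1}\bigl((1-2^{-4})n-t\bigr)_{s_1}(d-t)_{s_2+\dots+s_k}$ red copies of $H$ containing $v$. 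Exactly as in the proof of \cref{claim:few-yellow-nbrs-in-VRB}, the factor $d^{s_2+\dots+s_k}=\bigl((1+\lambda)/(r-1)\bigr)^{s_2+\dots+s_k}$ contributes $(1+\lambda)^{t-k-s_1}$, which combined with $\tau=(1+\lambda)^{-t}$ and $s_1\ge(t-k)/k$ makes this exceed the bound \eqref{eq:local-ramsey-blowup-ub} once $t$ is large — a contradiction.

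Case (c) is the genuine difficulty, and is where the hypotheses specific to the three-colour setting are used. The obstruction is that the vertices of $S\subseteq V_{RB}$ have yellow degree only \emph{less} than $d$, so \cref{lem:3-color-few-H0} cannot be invoked directly to bound the number of yellow $K_k$ through them. The plan is instead to use \cref{claim:few-yellow-nbrs-in-VRB}: the yellow graph on $V_{RB}$ has maximum degree at most $2\eta^{1/4}n$, and after intersecting with $V_{RB}\setminus V_{RB}'$ (which costs only $\eta^{1/4}n$ vertices by \cref{claim:very-low-yellow-deg}) the vertices also have yellow degree at least $\tfrac{1-\eta^{1/4}}{r}\ab{V_{RB}}$. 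One then shows, using the sparsity estimate on the yellow graph together with the politeness bounds \eqref{eq:polite-eta-bound} and \eqref{eq:assumptions-consequence}, that $V_{RB}$ — hence $S$ — cannot contain as many as $27^{-k^2}(\ab S)_k$ yellow $K_k$, ruling out (c). Controlling the yellow cliques inside $V_{RB}$ precisely here, in the absence of any assumption making $\eta$ exponentially small in $k$, is the main obstacle of the whole claim; once it is cleared, all four of $V_R,V_B,V_{RY},V_{BY}$ are empty by symmetry.
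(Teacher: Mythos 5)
Your skeleton matches the paper's up to the point where the yellow cliques appear: the symmetry reduction, the large red neighborhood $S=N_R(v)\cap V_{RB}$, the elimination of blue $K_k$ via \cref{lem:3-color-few-H0}, and the elimination of red $K_{k-1}$ by extending through $v$ against \eqref{eq:local-ramsey-blowup-ub} are all as in the paper. But your case (c) is a genuine gap, not a technicality. The statement you plan to prove there --- that $S$ (or $V_{RB}$) contains fewer than $27^{-k^2}(\ab S)_k$ labeled yellow $K_k$ --- is false in the relevant regime, and no amount of exploiting the max-degree bound of \cref{claim:few-yellow-nbrs-in-VRB} can rescue it. A graph on $\geq n/2$ vertices with maximum degree $2\eta^{1/4}n$ can perfectly well contain $\Theta_k((n)_k)$ copies of $K_k$ (take a disjoint union of cliques of size $2\eta^{1/4}n$); indeed, in the extremal Ramsey-blowup coloring itself, $V_{RB}$ is essentially all of $V(K_n)$ and contains about $(r-1)^{1-k}(n)_k$ yellow $K_k$'s, which vastly exceeds $27^{-k^2}(n)_k$ since $r\leq 4^k$. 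Nor can you finish by extending yellow $K_k$'s to yellow copies of $H$: vertices of $V_{RB}$ have yellow degree only about $n/r < d$, so the resulting count falls short of \eqref{eq:ramsey-blowup-ub} by a factor that decays in $t$.

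The paper's fix is to avoid the three-way Ramsey split (\cref{lem:3-color-clique-multiplicity}) altogether and never let the yellow case arise. One keeps the full strength of the bound $\ab T\geq(1-10\eta^{1/4})n$ (which you derive but then discard in favor of $n/2$), so that the yellow degree bound $d$ on vertices of $T\subseteq V_{RB}$ translates into a yellow edge \emph{density} in $T$ of at most $\frac{1+12\eta^{1/4}}{r-1}$; the politeness inequality \eqref{eq:assumptions-consequence} says precisely that this is below $\frac{1}{r(k,k-1)}$. Then \cref{lem:3-color-supersaturation} applied with $\ell=k-1$ forces either $\gamma(\ab T)_k$ blue $K_k$'s or $\gamma(\ab T)_{k-1}$ red $K_{k-1}$'s --- your cases (a) and (b), both of which you already handle correctly --- with no yellow alternative. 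So the politeness hypothesis is used not to bound yellow cliques but to push the yellow density of $T$ under the off-diagonal Ramsey threshold $1/r(k,k-1)$; that is the idea missing from your proposal.
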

\begin{proof}
	We prove that $V_R \cup V_{RY} =\varnothing$; the exact same proof, by reversing the roles of red and blue, also shows that $V_B \cup V_{BY} =\varnothing$. Suppose that $v \in V_R \cup V_{RY}$. We know that $v$ has at most $d$ blue neighbors, and by \cref{claim:few-yellow-nbrs-in-VRB}, $v$ also has at most $2 \eta^{1/4}n$ yellow neighbors in $V_{RB}$. So
	\[
		\ab{N_R(v)\cap V_{RB}} \geq \ab{V_{RB}} - d - 2 \eta^{1/4}n \geq n  - 6 \eta n -3 \eta^{1/4}n \geq \left(1- 10 \eta^{1/4}\right)n.
	\]
	Let $T=N_R(v) \cap V_{RB}$. Every vertex in $T \subseteq V_{RB}$ has yellow degree at most
	\[
		d= \frac{d}{\ab T} \ab T\leq \frac{1+\lambda}{(r-1)(1-10\eta^{1/4})} \ab T \leq  \frac{(1+\lambda)(1+11 \eta^{1/4})}{r-1}\ab T\leq \frac{1+12 \eta^{1/4}}{r-1}\ab T.
	\]
	Here, the second inequality follows since $\frac{1}{1-10x} \leq 1+11x$ for all $0 \leq x \leq 2^{-7}$ and since $\eta^{1/4} \leq 2^{-7}$ by (\ref{eq:polite-eta-bound}), and the final inequality holds since we chose $\lambda$ sufficiently small with respect to $\eta$.
	Note that (\ref{eq:assumptions-consequence}) is equivalent to
	\[
		\frac{1+12 \eta^{1/4}}{r-1} < \frac{1}{r(k,k-1)}.
	\]
	Therefore, by \cref{lem:3-color-supersaturation} applied with $\ell=k-1$, we find that $T$ contains at least $\gamma(\ab T)_k$ labeled blue copies of $K_k$ or at least $\gamma(\ab T)_{k-1}$ labeled red copies of $K_{k-1}$, for some $\gamma>0$ depending only on $k$.
	% Let $G$ be the graph with vertex set $T$ whose edges are the red and blue edges in the original coloring. By the above, we see that $G$ has minimum degree at least $(1-\frac1{r(k,k-1)})\ab T \geq (1- \frac{1}{r(k,k-1)-1}+\frac{1}{r^2})$, so $G$ is noticeably above the Tur\'an density for the existence of cliques of size $r(k,k-1)$. So by \cref{lem:supersaturation}, there are at least $\gamma(\ab{T})_{r(k,k-1)}$ labeled copies of $K_{r(k,k-1)}$ in $G$, for some $\gamma$ depending only on $k$. Each copy yields at least one blue copy of $K_k$, or at least one red copy of $K_{k-1}$. Since each blue $K_k$ lies in at most $(\ab T)_{r(k,k-1)-k}$ copies of $K_{r(k,k-1)}$, and each red $K_{k-1}$ lies in at most $(\ab T)_{r(k,k-1)-(k-1)}$ such copies, we find that $T$ must contain at least $\gamma' (\ab T)_k$ blue $K_k$ or at least $\gamma' (\ab T)_{k-1}$ red $K_{k-1}$, for some $\gamma'>0$ depending only on $k$. 
	The former outcome is impossible by \cref{lem:3-color-few-H0} since every vertex in $T$ has blue degree at least $d$. The latter outcome is also impossible, by the same argument as in \cref{lem:high-red-deg} (which is essentially the same as the argument in \cref{claim:few-yellow-nbrs-in-VRB} or \cref{lem:every-vertex-high-blue-deg}). Namely, any red copy of $K_{k-1}$ in $T$ extends to at least $(d-t)_{t-k-1}$ red copies of $H$ containing $v$, since both $v$ and every vertex in $T$ have red degree at least $d$. So if there are $\gamma (\ab T)_{k-1}$ labeled red $K_{k-1}$ in $T$, then we have at least $\gamma(\ab T)_{k-1} (d-t)_{t-k-1}$ labeled red copies of $H$ containing $v$, contradicting the bound (\ref{eq:local-ramsey-blowup-ub}).
\end{proof}

The following is the analogue of \cref{lem:every-vertex-high-blue-deg}, and is proved in the same way. 
\begin{claim}\label{claim:no-vtx-high-yellow-deg}
	Every vertex has yellow degree less than $(\frac{1}{r-1}+65k\sqrt \lambda)n$. 
\end{claim}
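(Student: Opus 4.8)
The plan is to argue by contradiction, essentially reproducing the proof of \cref{lem:every-vertex-high-blue-deg} under the dictionary red $\mapsto$ yellow, $V_B\mapsto V_{RB}$, $V_B'\mapsto V_{RB}'$, $H_0\mapsto K_k$, $H_0\setminus\{u_1\}\mapsto K_{k-1}$, and using the three-color lemmas \cref{lem:3-color-clique-multiplicity,lem:3-color-few-H0} in place of \cref{lem:clique-multiplicity,lem:few-H0-in-high-deg}. So suppose some vertex $v$ has yellow degree at least $(\frac1{r-1}+65k\sqrt\lambda)n$. By \cref{claim:VRBY-small,claim:VY-empty,claim:no-r-or-b-sets} we have $V(K_n)=V_{RB}\sqcup V_{RBY}$ with $\ab{V_{RBY}}\le\lambda n$, and every vertex of $V_{RB}$ has yellow degree below $d=\frac{(1+\lambda)n}{r-1}<(\frac1{r-1}+65k\sqrt\lambda)n$; hence $v\in V_{RBY}$, and all but at most $\lambda n$ of the yellow neighbours of $v$ lie in $V_{RB}$, so $\ab{N_Y(v)\cap V_{RB}}\ge(\frac1{r-1}+64k\sqrt\lambda)n$.

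Now delete $V_{RB}'$ from $N_Y(v)\cap V_{RB}$ and call $T$ what remains, so that every vertex of $T$ has red and blue degree at least $d$ and yellow degree at least $\frac{1-\eta^{1/4}}{r}\ab{V_{RB}}$, while $v$ still has yellow degree at least $(\frac1{r-1}+64k\sqrt\lambda)n$. Apply \cref{lem:3-color-clique-multiplicity} to $T$ with $H_1=H_2=K_k$ and $H_3=K_{k-1}$. If $T$ contained at least $27^{-k^2}(\ab T)_k$ labelled red (resp.\ blue) copies of $K_k$, this would contradict the red analogue of \cref{lem:3-color-few-H0} (resp.\ \cref{lem:3-color-few-H0} itself), since every vertex of $T\subseteq V_{RB}$ has red and blue degree at least $d$ while $\tau$ is far smaller than $27^{-k^2}(r-1)^{-k}$. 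Hence $T$ contains at least $27^{-k^2}(\ab T)_{k-1}$ labelled yellow copies of $K_{k-1}$. Each such $K_{k-1}$ together with $v$ forms a yellow $K_k$, which we extend to a labelled yellow copy of $H$ by attaching $s_1$ pendant vertices at $v$ (chosen from its yellow neighbourhood) and $s_2,\dots,s_k$ pendant vertices at the other $k-1$ vertices (each with at least $\frac{1-\eta^{1/4}}{r}\ab{V_{RB}}$ yellow neighbours), subtracting $t$ from each count to keep the chosen vertices distinct. Exactly as in \cref{lem:every-vertex-high-blue-deg} and \cref{claim:few-yellow-nbrs-in-VRB}, the $\sqrt\lambda$-scale excess at $v$---which carries a factor $k$, since $v$ has $64k\sqrt\lambda n$ more yellow neighbours than the nominal $\frac n{r-1}$---outweighs the $\eta^{1/4}$-scale deficit at the remaining $k-1$ vertices, because $ks_1\ge s_1+\dots+s_k=t-k$ forces $4ks_1-(s_2+\dots+s_k)\ge 3ks_1\ge t$. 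Thus the number of labelled yellow copies of $H$ through $v$ produced this way is at least $27^{-k^2}e^{c_k t}(r-1)^{1-t}(n)_{t-1}$ for some $c_k>0$, which, since $t$ is enormous, exceeds the upper bound \eqref{eq:local-ramsey-blowup-ub}. This contradiction proves the claim.

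The step requiring the most care is verifying that, after deleting $V_{RB}'$, the set $T$ is still of order $\frac n{r-1}$, so that the counting above goes through. In the two-color argument this was immediate because $\ab{V_B'}$ was only of order $\sqrt\lambda n$, whereas here \cref{claim:very-low-yellow-deg} only controls $\ab{V_{RB}'}$ to order $\eta^{1/4}n$, which is \emph{larger} than the lower bound we have on $\ab{N_Y(v)\cap V_{RB}}$. This is exactly why \cref{claim:few-yellow-nbrs-in-VRB,claim:VY-empty,claim:no-r-or-b-sets} had to be established first: once we know that essentially every vertex lies in $V_{RB}$ and has yellow degree below $d$, the situation is genuinely ``two-color-like'', and one rules out $v$ having too many yellow neighbours inside $V_{RB}'$ by the same type of density argument (via \cref{lem:3-color-supersaturation} and \cref{lem:3-color-few-H0}) used in the proof of \cref{claim:very-low-yellow-deg}. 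The remaining work is bookkeeping with the parameter hierarchy: one must check that $\lambda$, while small compared with $\eta$ and $\varepsilon$, is not vanishingly small compared with $\eta^{1/2}/(r-1)^2$, so that the exponential factor $e^{c_k t}$ (with $c_k$ of order $(r-1)\sqrt\lambda$) dominates both the constant loss $27^{-k^2}$ and the $\eta^{1/4}$-deficit factor; here the choice $\tau=(1+\lambda)^{-t}$ with $t$ huge is what makes $\tau$ negligible against every $k$-dependent quantity that appears.
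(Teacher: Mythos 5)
Your overall strategy (transplant the proof of \cref{lem:every-vertex-high-blue-deg}, using that by this stage every vertex lies in $V_{RB}\cup V_{RBY}$ and hence has red and blue degree at least $d$) is the right one, and you have correctly located the danger spot. But neither of your two proposed resolutions works, and as written the final count does not give a contradiction. First, the quantitative heart of the argument fails: if the $k-1$ vertices of the yellow $K_{k-1}$ are only guaranteed to lie in $V_{RB}\setminus V_{RB}'$, their yellow degree is only bounded below by $\tfrac{1-\eta^{1/4}}{r}\ab{V_{RB}}\approx\bigl(\tfrac{1}{r-1}-\tfrac{2\eta^{1/4}}{r-1}\bigr)n$, a multiplicative deficit of order $\eta^{1/4}$, whereas the excess at $v$ is only of order $k(r-1)\sqrt\lambda$. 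The inequality $4ks_1-(s_2+\dotsb+s_k)\ge t$ buys you a factor of $k$, not a factor of $\eta^{1/4}/\bigl((r-1)\sqrt\lambda\bigr)$; so the exponent in your claimed $e^{c_kt}$ is in fact negative unless $(r-1)\sqrt\lambda\gtrsim\eta^{1/4}$, i.e.\ $\lambda\gtrsim\eta^{1/2}/(r-1)^2$. That lower bound on $\lambda$ is incompatible with the upper bounds the rest of the proof needs: the application of \cref{lem:stability} requires $65k\sqrt\lambda\le\min\{1/(2r^2),\varepsilon/2\}$ and \cref{claim:no-very-low-r-b-deg} requires $130k\sqrt\lambda<\tfrac{1}{r(k,k-1)}-\tfrac{1}{r(k,k-1)+1}$, both forcing $\sqrt\lambda\lesssim r^{-2}$, while $\eta^{1/4}\ge(3k/(r-1))^{1/4}\gg r^{-1}$; one would need $\sqrt{r(k,k/2)}\lesssim r^{-3/2}$, which is absurd. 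Second, your claim that one can rule out $N_Y(v)$ being swallowed by $V_{RB}'$ ``by the same type of density argument'' is unsubstantiated: \cref{claim:very-low-yellow-deg} is a global averaging statement and gives no control whatsoever on $\ab{N_Y(v)\cap V_{RB}'}$ for a particular $v$; nothing proved so far prevents $N_Y(v)\cap V_{RB}$ from lying entirely inside $V_{RB}'$, since $\ab{V_{RB}'}$ may be as large as $\eta^{1/4}n\gg n/(r-1)$.

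The missing ingredient is a \emph{new}, $\sqrt\lambda$-scale version of \cref{claim:very-low-yellow-deg}, which only becomes available after \cref{claim:VY-empty,claim:no-r-or-b-sets}. Since now \emph{every} vertex of $K_n$ has red and blue degree at least $d$, \cref{lem:3-color-few-H0} forbids $\gamma(n)_k$ red or blue $K_k$'s anywhere, so \cref{lem:supersaturation} applied to the red-plus-blue graph $G$ (with $H_0=K_r$ and $\delta=\lambda$) forces $e(G)<\bigl(1-\tfrac{1}{r-1}+\lambda\bigr)\binom n2$, i.e.\ $\sum_v\deg_Y(v)\ge\bigl(\tfrac{1}{r-1}-\lambda\bigr)n(n-1)$. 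This lower bound on the \emph{average} yellow degree is $\tfrac{n}{r-1}-O(\lambda n)$, not merely $\tfrac nr$ as in \cref{claim:very-low-yellow-deg}; since the maximum yellow degree is $d=\tfrac{n}{r-1}+O(\lambda n)$ outside the set $V_{RBY}$ of size at most $\lambda n$, Markov's inequality shows that the set $W$ of vertices with yellow degree below $\bigl(\tfrac{1}{r-1}-7\sqrt\lambda\bigr)n$ has $\ab W\le\sqrt\lambda n$. With this in hand your argument closes exactly as in \cref{lem:every-vertex-high-blue-deg}: take $S=N_Y(v)\setminus W$, so $\ab S\ge\bigl(\tfrac{1}{r-1}+64k\sqrt\lambda\bigr)n$, extract $27^{-k^2}(\ab S)_{k-1}$ yellow $K_{k-1}$'s via \cref{lem:3-color-clique-multiplicity,lem:3-color-few-H0}, and extend with excess $65k(r-1)\sqrt\lambda$ at $v$ against deficit $7(r-1)\sqrt\lambda$ at the other vertices --- now both at the same $\sqrt\lambda$ scale, so the factor $k$ from $ks_1\ge t-k$ genuinely wins.
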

In addition to knowing that no vertex has yellow degree much greater than $d$, we will also need to know that every vertex has red and blue degree noticeably above $d$, which is the content of the next claim.
\begin{claim}\label{claim:no-very-low-r-b-deg}
	Every vertex has red and blue degree at least $\frac{2k-6}{r-1}n$.
\end{claim}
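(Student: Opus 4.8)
The plan is to argue by contradiction. Suppose some vertex $v$ has red degree less than $\frac{2k-6}{r-1}n$; the blue case is identical after interchanging red and blue, which is legitimate because everything proved so far in this section (the sets $V_S$, \cref{claim:VRBY-small,claim:very-low-yellow-deg,claim:few-yellow-nbrs-in-VRB,claim:VY-empty,claim:no-r-or-b-sets,claim:no-vtx-high-yellow-deg}, and the three color‑versions of \cref{lem:3-color-few-H0}) is symmetric in red and blue. By \cref{claim:no-vtx-high-yellow-deg} the yellow degree of $v$ is less than $(\frac1{r-1}+65k\sqrt\lambda)n$, so the blue degree of $v$ is at least $(1-\frac{2k-5}{r-1}-65k\sqrt\lambda)n-1$, which exceeds $\tfrac n2$ for $k$ large. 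The idea is then to find a blue $K_{k-1}$ inside the (huge) blue neighborhood of $v$; joined to $v$ this gives a blue $K_k$ through $v$, which extends to far too many blue copies of $H$ through $v$, contradicting \eqref{eq:local-ramsey-blowup-ub} in exactly the way \cref{lem:high-red-deg} and \cref{claim:few-yellow-nbrs-in-VRB} do.

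Concretely, set $S=N_B(v)$ and $T=S\cap V_{RB}$. Since the only nonempty sets among the $V_S$ are $V_{RB}$ and $V_{RBY}$ (by \cref{claim:VY-empty,claim:no-r-or-b-sets} together with $V_{\varnothing}=\varnothing$), and $\ab{V_{RBY}}\le\lambda n$ by \cref{claim:VRBY-small}, we get $\ab T\ge\ab S-\lambda n>(1-\frac{2k-5}{r-1}-66k\sqrt\lambda)n$; moreover $T\subseteq N_B(v)$ and every vertex of $T$ lies in $V_{RB}$ and so has red and blue degree at least $d$. Next I would bound the yellow edge density in $T$: since every vertex has yellow degree less than $(\frac1{r-1}+65k\sqrt\lambda)n$ by \cref{claim:no-vtx-high-yellow-deg}, $T$ has fewer than $\frac12\ab T(\frac1{r-1}+65k\sqrt\lambda)n$ yellow edges, so its yellow edge density is at most $\frac{n}{\ab T-1}(\frac1{r-1}+65k\sqrt\lambda)$; using $\ab T\ge(1-\frac{2k-5}{r-1}-o(1))n$ and \eqref{eq:assumptions-consequence} (which gives $\frac{r-1}{r(k,k-1)}>1+12\eta^{1/4}$), and taking $\lambda$ small enough in terms of $k$ so the $65k\sqrt\lambda$ term is dominated, this density is strictly less than $\frac1{r(k,k-1)}$.

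Now apply \cref{lem:3-color-supersaturation} to the induced coloring on $T$, with the roles of red and blue interchanged and with $\ell=k-1$: as the yellow edge density is below $\frac1{r(k,k-1)}$, $T$ contains at least $\gamma(\ab T)_k$ labeled red copies of $K_k$ or at least $\gamma(\ab T)_{k-1}$ labeled blue copies of $K_{k-1}$, for some $\gamma=\gamma(k)>0$. The first alternative is impossible: $T\subseteq V_{RB}$, so every vertex of $T$ has red degree at least $d$, whence the red version of \cref{lem:3-color-few-H0} gives fewer than $\tau(n)_k$ red $K_k$ in $T$, and $\tau<\gamma(\ab T/n)^k$ since $t$ is large. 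So $T$ has at least $\gamma(\ab T)_{k-1}$ labeled blue $K_{k-1}$'s. For each of them, map $u_1$ (the vertex of $K_k$ with the most pendant edges, so $s_1\ge\frac{t-k}{k}$) to $v$ and the $K_{k-1}$ to $u_2,\dots,u_k$: since all of $T$ lies in $N_B(v)$, this is a labeled blue $K_k$ through $v$, and extending it by choosing the $s_1$ pendant neighbors of $v$ among its $\ge\ab S-t$ blue neighbors and the $s_i$ pendant neighbors of the image of $u_i$ among its $\ge d-t$ blue neighbors produces at least $(\ab S-t)_{s_1}(d-t)_{s_2+\dots+s_k}$ labeled blue copies of $H$ through $v$. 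Altogether $v$ lies in at least $\gamma(\ab T)_{k-1}(\ab S-t)_{s_1}(d-t)_{s_2+\dots+s_k}$ labeled blue copies of $H$; using $\ab T,\ab S\ge\frac n2$, $d=\frac{1+\lambda}{r-1}n$, $(k-1)+s_1+(s_2+\dots+s_k)=t-1$, $s_1\ge\frac{t-k}{k}$ and $r-1\ge 2^{k/2-1}$, this is at least $\gamma\,2^{t/8-k+1}n^{t-1}$ for $n$ large, which exceeds the bound $(1+\frac Cn)t(r-1)^{1-t}(n-1)_{t-1}$ of \eqref{eq:local-ramsey-blowup-ub} once $t$ is large relative to $k$ and $\gamma$ — the desired contradiction.

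The delicate step is the second paragraph: we must keep the yellow edge density inside the slightly shrunken set $T$ strictly below the threshold $\frac1{r(k,k-1)}$ that makes \cref{lem:3-color-supersaturation} output a blue (not merely a red) $K_{k-1}$, and this is precisely where a quantitative lower bound on the Ramsey‑number gap $r(k,k)-r(k,k-1)$ enters — most comfortably through the politeness hypothesis, in the form \eqref{eq:assumptions-consequence}, though \cref{lem:consecutive-ramsey-difference} also suffices with room to spare — in conjunction with choosing $\lambda$ small enough in terms of $k$. The remaining estimates are routine variants of those already carried out in \cref{sec:three-colors}.
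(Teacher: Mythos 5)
Your proposal is correct and follows essentially the same route as the paper: bound the yellow edge density in the blue neighborhood of $v$ below $1/r(k,k-1)$ using the lower bound on the gap $r(k,k)-r(k,k-1)$ (the paper uses \cref{lem:consecutive-ramsey-difference}; your politeness-based bound \eqref{eq:assumptions-consequence} also works), then apply \cref{lem:3-color-supersaturation} and rule out both outcomes. One small point in your favor: you apply the supersaturation lemma with red and blue interchanged, obtaining many red $K_k$ (killed by the red version of \cref{lem:3-color-few-H0}) or many blue $K_{k-1}$ (which, sitting inside $N_B(v)$, complete through $v$ to blue $K_k$'s and hence to too many blue copies of $H$); this is the correct color orientation, whereas the paper's text literally says ``blue $K_k$ or red $K_{k-1}$,'' which does not close the argument as written since a red $K_{k-1}$ in the \emph{blue} neighborhood of $v$ does not extend through $v$ --- the intended argument is the color-swapped one you give.
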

\begin{proof}
	Suppose for contradiction that some vertex $v$ has red degree less than $\frac{2k-6}{r-1}n$ (the case of a vertex of low blue degree follows by interchanging the roles of red and blue). By \cref{claim:no-vtx-high-yellow-deg}, $v$ has fewer than $(\frac{1}{r-1} + 65k \sqrt \lambda)n < \frac{2}{r-1}n$ yellow neighbors, since we chose $\lambda$ small with respect to $k$, and thus with respect to $r$. So we conclude that $v$ has at least $(1- \frac{2k-4}{r-1})n = \frac{r-2k+3}{r-1}n$ blue neighbors. Let $T$ be the blue neighborhood of $v$.
	% , and let $G$ be the graph with vertex set $T$ whose edges are the red and blue edges of the coloring. 
	Every vertex in $T$ has yellow degree less than
	\[
		\left(\frac{1}{r-1} + 65k \sqrt \lambda\right) n \leq \left(\frac{1}{r-1}+65k \sqrt \lambda\right) \frac{r-1}{r-2k+3}\ab T < \left(\frac{1}{r-2k+3} + 130 k \sqrt \lambda\right) \ab T,
	\]
	since $r-1 < 2(r-2k+3)$ for sufficiently large $k$, since $r$ grows exponentially in $k$. By \cref{lem:consecutive-ramsey-difference}, we know that $r-2k+3\geq r(k,k-1)+1$. Additionally, since we chose $\lambda$ sufficiently small with respect to $k$, we see that
	\[
		\frac{1}{r-2k+3} + 130 k \sqrt \lambda \leq \frac{1}{r(k,k-1)+1} + 130 k \sqrt \lambda < \frac{1}{r(k,k-1)}.
	\]
	Therefore, the yellow edge density in $T$ is less than $\frac{1}{r(k,k-1)}$.
	We now argue identically to the proof of \cref{claim:no-r-or-b-sets}: by \cref{lem:3-color-supersaturation}, $T$ contains many blue $K_k$ or many red $K_{k-1}$, both of which are impossible since $v$ and every vertex in $T$ have red and blue degrees at least $d$.
\end{proof}

For the rest of the proof, let $G$ be the graph of red and blue edges in the coloring, and note that $G$ has minimum degree at least $(1- \frac{1}{r-1} -65k\sqrt \lambda)n$. In order to apply \cref{lem:stability} to $G$, we need to check that $G$ has few copies of $K_r$, which is done analogously to \cref{claim:few-Kk}.
\begin{claim}\label{claim:few-kr}
	$G$ has at most $2\tau (n)_r$ copies of $K_r$.
\end{claim}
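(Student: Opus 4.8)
The plan is to mirror the proof of \cref{claim:few-Kk}, with Ramsey's theorem doing the work that the Erd\H os hypergraph embedding did there. The engine is the observation that since $r = r(k,k)$, every copy of $K_r$ in $G$ --- that is, every $r$-set of $V(K_n)$ spanning no yellow edge --- contains a monochromatic copy of $K_k$. Hence a lower bound on the number of copies of $K_r$ in $G$ forces a lower bound on the number of monochromatic copies of $K_k$, and it only remains to note that we already control the latter.

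First I would bound the number of monochromatic $K_k$. By \cref{claim:no-very-low-r-b-deg}, every vertex has red and blue degree at least $\tfrac{2k-6}{r-1}n \geq \tfrac{1+\lambda}{r-1}n = d$ (using $k \geq 4$). Applying \cref{lem:3-color-few-H0} with $S = V(K_n)$, together with its red analogue (whose proof is identical, with the roles of red and blue interchanged), we conclude that $\chi$ contains fewer than $\tau(n)_k$ labeled blue copies of $K_k$ and fewer than $\tau(n)_k$ labeled red copies of $K_k$. In particular, the total number of \emph{unlabeled} monochromatic copies of $K_k$ (red and blue together) is less than $\tfrac{2\tau}{k!}(n)_k = 2\tau\binom nk$.

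Now the double count. Every copy of $K_r$ in $G$ contains at least one monochromatic copy of $K_k$, while a fixed monochromatic $K_k$ lies in at most $\binom{n-k}{r-k}$ copies of $K_r$ in $G$, since each such copy is obtained by adjoining $r-k$ further vertices. Hence the number of copies of $K_r$ in $G$ is at most
\[
	2\tau\binom nk\binom{n-k}{r-k} = 2\tau\binom rk\binom nr \leq 2\tau\,(n)_r,
\]
using the identity $\binom nk\binom{n-k}{r-k} = \binom rk\binom nr$ and the bounds $\binom rk \leq r!$, $r!\binom nr = (n)_r$. This proves the claim. As in \cref{claim:few-Kk}, the precise constant is irrelevant downstream: to apply \cref{lem:stability} to $G$ we only need the number of $K_r$ to be at most $\alpha\binom nr$ for the relevant $\alpha$ (a quantity depending only on $k$), and this holds with enormous room since $\tau = (1+\lambda)^{-t}$ is tiny compared to anything depending only on $k$.

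There is no genuine obstacle here: the entire content is the Ramsey observation in the first paragraph, after which the argument is routine bookkeeping relating $K_r$-counts in $G$ to monochromatic $K_k$-counts. The only point requiring care is that one must already know every vertex has red \emph{and} blue degree at least $d$ before invoking \cref{lem:3-color-few-H0} for both colors --- which is exactly why \cref{claim:no-very-low-r-b-deg} was established first.
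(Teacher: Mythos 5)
Your proof is correct and follows essentially the same route as the paper: the Ramsey observation that every $K_r$ in $G$ contains a red or blue $K_k$, an averaging/double-count relating $K_r$-counts to monochromatic $K_k$-counts, and an appeal to \cref{lem:3-color-few-H0} (for both colors) using the fact that every vertex has red and blue degree at least $d$. The paper phrases this as a contradiction and cites $V(K_n)=V_{RB}\cup V_{RBY}$ for the degree condition rather than \cref{claim:no-very-low-r-b-deg}, but these are cosmetic differences.
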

\begin{proof}
	Every copy of $K_r$ in $G$ yields at least one red or blue copy of $K_k$ in the original coloring. By the same averaging argument as in the proof of \cref{lem:3-color-supersaturation}, if there are at least $2\tau(n)_r$ copies of $K_r$ in $G$, then there are at least $2 \tau(n)_k$ red or blue copies of $K_k$ in the original coloring. Without loss of generality, at least half of these are blue, so there are at least $\tau(n)_k$ labeled blue $K_k$. However, since every vertex of $K_n$ lies in $V_{RB} \cup V_{RBY}$, and thus has blue degree at least $d$, we obtain a contradiction to \cref{lem:3-color-few-H0}.
\end{proof}

We can now apply \cref{lem:stability} to $G$, and find that $G$ is nearly $(r-1)$-partite: it has a partition into parts $V_1,\dots,V_{r-1}$ such that the total number of internal edges in these parts is at most $\varepsilon \binom n2$. As in \cref{sec:proof-main}, we can assume that $V_1,\dots,V_{r-1}$ is a max $(r-1)$-cut of $G$. The following additional properties of the partition are proved identically to \cref{lem:partition-properties}: each part $V_i$ has size $\frac{n}{r-1} \pm \sqrt{2 \varepsilon}n$, and $e_{RB}(V_i,V_j) \geq (1-r^2 \varepsilon)\ab{V_i} \ab{V_j}$ for all $1 \leq i<j \leq r-1$, where $e_{RB}$ denotes the number of edges in $G$, that is, the number of red or blue edges.

The following claim is the three-color analogue of \cref{lem:high-red-deg}. The proof proceeds along the same lines as that of \cref{lem:high-red-deg}, but in this three-color setting, we need to split into two cases.
\begin{claim}\label{claim:3-color-high-red-deg}
	Fix a vertex $v \in V(K_n)$, and let $U_i = N_{RB}(v) \cap V_i$ denote the set of red or blue neighbors of $v$ inside $V_i$, for $1 \leq i \leq r-1$. Then there exists some $i$ such that $\ab{U_i} \leq \theta \ab{V_i}$.
\end{claim}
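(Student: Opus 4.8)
The plan is to argue by contradiction, following the template of \cref{lem:high-red-deg}; the extra difficulty is that the edges of $\chi$ between the parts $V_1,\dots,V_{r-1}$ are split between two colours rather than concentrated in one, so that a clique meeting several parts need not be monochromatic, and this is what forces a case distinction.

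Suppose that $\ab{U_i}>\theta\ab{V_i}$ for every $1\le i\le r-1$; since each $\ab{V_i}\ge(\frac{1}{r-1}-\sqrt{2\varepsilon})n$, every $U_i$ has size linear in $n$. First I would call a tuple $(w_1,\dots,w_{r-1})$ with $w_i\in U_i$ a \emph{good transversal} if no edge of $\{w_1,\dots,w_{r-1}\}$ is yellow. Using $e_Y(V_i,V_j)\le r^2\varepsilon\ab{V_i}\ab{V_j}$ together with a union bound (valid since $\varepsilon$ is small enough relative to $\theta$), at least $\frac34\prod_i\ab{U_i}=\Omega(n^{r-1})$ transversals are good. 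For each good transversal $T$, the set $\{v\}\cup\{w_1,\dots,w_{r-1}\}$ consists of exactly $r=r(k,k)$ distinct vertices (the $w_i$ lie in distinct parts, and $v\ne w_i$ since $w_i\in U_i\subseteq N_{RB}(v)$), every edge of which is red or blue, because each $w_iw_j$ is a non-yellow edge between two parts and each $vw_i$ is red or blue by definition of $U_i$. Hence, by the definition of $r(k,k)$, this set contains a monochromatic $K_k$; fix one such $W=W(T)$ and its colour $c=c(T)\in\{R,B\}$.

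Now I would split according to whether $v\in W(T)$. In the first case at least half of the good transversals $T$ have $v\in W(T)$; by the pigeonhole principle there are a colour $c$ and a $(k-1)$-set $J\subseteq\{1,\dots,r-1\}$ with $\Omega(n^{r-1})$ good transversals for which $W(T)=\{v\}\cup\{w_j:j\in J\}$ is monochromatic in $c$. For each of these, $\{w_j:j\in J\}$ is a $c$-coloured $K_{k-1}$ all of whose vertices are $c$-neighbours of $v$; since good transversals yielding the same such set can differ only in the coordinates outside $J$, the number of distinct such $K_{k-1}$'s is at least $\Omega(n^{r-1})/\prod_{i\notin J}\ab{V_i}=\Omega(n^{k-1})$, and each of them together with $v$ gives a monochromatic $K_k$ containing $v$. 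By \cref{claim:no-very-low-r-b-deg} every vertex has $c$-degree at least $\frac{2k-6}{r-1}n$, so each such $K_k$ extends, by attaching the $t-k$ pendant vertices to $c$-neighbours of the clique vertices, to $\Omega(n^{t-k})$ monochromatic copies of $H$ containing $v$. Thus $v$ lies in $\Omega(n^{t-1})$ monochromatic copies of $H$, and since $t$ is large with respect to $k$ the same arithmetic as in \cref{lem:high-red-deg} shows this exceeds \eqref{eq:local-ramsey-blowup-ub}, a contradiction. In the second case at least half of the good transversals $T$ have $v\notin W(T)$, so $W(T)$ is a monochromatic $K_k$ among the $w_i$'s; arguing identically there are a colour $c$ and a $k$-set $J'$ with $\Omega(n^{r-1})$ good transversals realising a $c$-coloured $K_k$ on the parts indexed by $J'$, and dividing by the $\prod_{i\notin J'}\ab{V_i}$ choices for the remaining coordinates yields $\Omega(n^k)$ distinct monochromatic $K_k$'s in $\chi$. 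Extending each by pendant edges as before produces $\Omega(n^t)$ monochromatic copies of $H$, contradicting \eqref{eq:ramsey-blowup-ub} once $t$ is large with respect to $k$.

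The hard part is exactly the place where the two cases branch: one must commit to taking $r(k,k)$ parts together with $v$ so that the two-colour Ramsey property applies, and then handle separately the monochromatic $K_k$ it produces according to whether or not it uses $v$ --- the former case overloading the local count \eqref{eq:local-ramsey-blowup-ub} around $v$, the latter overloading the global count \eqref{eq:ramsey-blowup-ub}. Everything else (counting good transversals, the pigeonhole steps, and the extension by pendant edges) is routine and parallels \cref{lem:high-red-deg}.
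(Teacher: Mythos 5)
Your proof is correct and follows essentially the same route as the paper: build $\Omega(n^{r-1})$ yellow-free transversals of $U_1,\dots,U_{r-1}$, adjoin $v$ to get yellow-free copies of $K_r$, extract a red-or-blue $K_k$ from each, and split on whether that $K_k$ contains $v$, contradicting the local bound \eqref{eq:local-ramsey-blowup-ub} in one case and a global count in the other. The only (immaterial) differences are that the paper disposes of the $v\notin W(T)$ case by citing \cref{lem:3-color-few-H0} rather than redoing the pendant-edge extension against \eqref{eq:ramsey-blowup-ub}, and uses blue degree at least $d$ rather than \cref{claim:no-very-low-r-b-deg} for the extensions.
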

\begin{proof}
	Suppose for contradiction that $\ab{U_i} \geq \theta \ab{V_i}$ for all $1 \leq i \leq r-1$. Then for all $i \neq j$, we have that
	\begin{equation*}
		e_{RB} (U_i,U_j) \geq \ab{U_i} \ab{U_j} - r^2 \varepsilon \ab{V_i} \ab{V_j} \geq \left( 1- \frac{r^2 \varepsilon}{\theta^2} \right) \ab{U_i} \ab{U_j}.
	\end{equation*}
	This implies that if we pick uniformly random vertices from $U_i$ and $U_j$, they are joined by a red or blue edge with probability at least $1-r^2 \varepsilon/\theta^2$. By the union bound, this implies that if we pick a uniformly random vertex from each $U_i$, the probability that they form a $K_{r-1}$ with no yellow edges is at least $1- \binom{r-1}2 r^2 \varepsilon/\theta^2 \geq \frac 12$, since we chose $\varepsilon \ll \theta \ll \frac 1r$. Therefore, the number of copies of $K_{r-1}$ in $S \coloneqq U_1 \cup \dotsb \cup U_{r-1}$ with no yellow edges is at least
	\begin{align*}
		\frac 12 \prod_{i=1}^{r-1} \ab{U_i} &\geq \frac{\theta^{r-1}}{2} \prod_{i=1}^{r-1} \ab{V_i} \geq  \frac{\theta^{r-1}}{2} \left( \frac{n}{r-1} - \sqrt{2\varepsilon}n \right)^{r-1} \geq \beta (n)_{r-1},%(2t+2) \tau(n)_{r-1},
	\end{align*}
	where $\beta>0$ is a constant depending only on $k$ (as it depends only on $\theta,r,$ and $\varepsilon$, each of which depends only on $k$). Recall that $\lambda$ also depends only on $k$, and that $\tau=(1+\lambda)^{-t}$, which decays exponentially in $t$ for fixed $\lambda$. Therefore, since we picked $t$ sufficiently large, we have that $\beta \geq (2t+2)\tau$, as the right-hand side also decays exponentially in $t$.
	In all, we conclude that the number of copies of $K_{r-1}$ in $S$ with no yellow edges is at least $(2t+2)\tau (n)_{r-1}$.
	Since $S \subseteq N_{RB}(v)$, every such $K_{r-1}$ yields a copy of $K_r$ containing $v$ with no yellow edges. Let $\Q$ denote this set of copies of $K_{r}$.

	For every clique $Q \in \Q$, we fix some red or blue $K_k$ contained in $Q$, and call it $Q'$. Let $\Q_1 \subseteq \Q$ be the set of $Q \in \Q$ such that $v \notin Q'$. Note that every red or blue $K_k$ can appear as $Q'$ for at most $(n-k-1)_{r-k-1}$ choices of $Q \in \Q_1$, for having fixed the red or blue $K_k$, we need to pick $r-k-1$ other vertices besides $v$ to complete to a $K_r$ in $\Q_1$. Therefore, if $\ab{\Q_1} \geq 2 \tau (n)_{r-1}$, then we find at least $2\tau(n)_{k}$ red or blue $K_k$ not containing $v$. At least half of these are blue (say), which contradicts \cref{lem:3-color-few-H0}, since each of these blue $K_k$ is contained in $V_{RB} \cup V_{RBY}$, and thus all of their vertices have at least $d$ blue neighbors.

	Therefore, if we let $\Q_2 = \Q \setminus \Q_1$, we conclude that $\ab{\Q_2} \geq 2t\tau (n)_{r-1}$. By the same argument as in the last paragraph, this implies that there are at least $2t\tau(n)_{k-1}$ red or blue $K_k$ containing $v$. Without loss of generality at least half of them are blue. Since every vertex has blue degree at least $d$, every such blue $K_k$ extends to at least $(d-k)_{t-k}$ labeled blue copies of $H$, which contradicts the bound \eqref{eq:local-ramsey-blowup-ub}.
\end{proof}

The following claim is the three-color analogue of \cref{lem:vertices-ok}.
\begin{claim}\label{claim:3-col-vertices-ok}
	Let $v \in V(K_n)$, and suppose that $v$ lies in part $V_i$. Then $\ab{N_Y(v) \cap V_i}\geq (1- \theta)\ab{V_i}$ and $\ab{N_{RB}(v) \cap V_j} \geq (1-2r \theta)\ab{V_j}$ for all $j \neq i$.
\end{claim}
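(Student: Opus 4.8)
The plan is to combine \cref{claim:3-color-high-red-deg} with \cref{claim:no-vtx-high-yellow-deg} and the max $(r-1)$-cut property of $G$, so that \cref{claim:3-col-vertices-ok} simultaneously plays the role of \cref{lem:high-blue-deg,lem:vertices-ok} from the two-color argument. Throughout, write $U_m = N_{RB}(v) \cap V_m$ and let $i$ be the index of the part containing $v$. By \cref{claim:3-color-high-red-deg} there is an index $i'$ with $\ab{U_{i'}} \leq \theta\ab{V_{i'}}$; we must show that $\ab{U_j} \geq (1-2r\theta)\ab{V_j}$ for all $j \neq i'$ and that $i' = i$, from which both assertions follow.

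First I would prove the three-color analogue of \cref{lem:high-blue-deg}: whenever $\ab{U_{i'}} \leq \theta\ab{V_{i'}}$, we have $\ab{U_j} \geq (1-2r\theta)\ab{V_j}$ for every $j \neq i'$. This runs exactly as in the two-color case, with yellow in the role of red. By \cref{claim:no-vtx-high-yellow-deg}, $v$ has at most $(\tfrac{1}{r-1}+65k\sqrt\lambda)n$ yellow neighbors; all but at most $\ab{U_{i'}}+1 \leq \theta\ab{V_{i'}}+1$ vertices of $V_{i'}$ are yellow neighbors of $v$; and $\ab{V_{i'}} \geq (\tfrac{1}{r-1}-\sqrt{2\varepsilon})n$ by the part-size bound. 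Hence $v$ sends at most $(65k\sqrt\lambda+\sqrt{2\varepsilon}+\theta)n \leq 2\theta n$ yellow edges out of $V_{i'}$, using $\lambda \ll \varepsilon \ll \theta$. Since $\ab{V_j} \geq n/r$, this leaves $v$ with at most $2r\theta\ab{V_j}$ yellow neighbors, hence at least $(1-2r\theta)\ab{V_j}$ red-or-blue neighbors, in $V_j$.

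Next I would pin down the index. Since $V_1,\dots,V_{r-1}$ is a max $(r-1)$-cut of $G$, the vertex $v$ has at most as many $G$-neighbors in its own part $V_i$ as in any other part; in particular $\ab{U_i} \leq \ab{U_{i'}} \leq \theta\ab{V_{i'}}$. If $i \neq i'$, then the previous paragraph (applied with the index $i'$) also gives $\ab{U_i} \geq (1-2r\theta)\ab{V_i}$, whence $(1-2r\theta)\ab{V_i} \leq \theta\ab{V_{i'}}$; this contradicts near-equitability, because $\ab{V_i},\ab{V_{i'}} = \tfrac{n}{r-1}\pm\sqrt{2\varepsilon}n$ together with $\theta \ll \tfrac1r$ and $\sqrt\varepsilon \ll \tfrac1r$ force $(1-2r\theta)\ab{V_i} > \theta\ab{V_{i'}}$. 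Therefore $i'=i$, and then $\ab{N_{RB}(v) \cap V_j} = \ab{U_j} \geq (1-2r\theta)\ab{V_j}$ for every $j \neq i$, while $\ab{N_Y(v)\cap V_i} = \ab{V_i}-1-\ab{U_i} \geq (1-\theta)\ab{V_i}$, absorbing the harmless additive constant as with the floor and ceiling signs elsewhere.

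I expect no real obstacle here: the argument is a routine assembly of two earlier claims with the max-cut property, following the two-color proof almost line for line. The only points needing a moment's care are the parameter chain in the second paragraph---that $65k\sqrt\lambda+\sqrt{2\varepsilon}+\theta \leq 2\theta$, which holds comfortably under the stated hierarchy---and remembering to use $\ab{V_j} \geq n/r$ rather than $\ab{V_j}\approx\tfrac{n}{r-1}$ so that the constant $2r\theta$ in the statement comes out right.
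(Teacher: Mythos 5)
Your proof is correct and is exactly the argument the paper intends: the paper states that \cref{claim:3-col-vertices-ok} follows by combining the proofs of \cref{lem:high-blue-deg,lem:vertices-ok} with yellow playing the role of red, which is precisely what you carry out (using \cref{claim:no-vtx-high-yellow-deg} for the degree bound and the max $(r-1)$-cut property of $G$ to identify the exceptional index with $v$'s own part). The parameter checks and the absorption of the additive $-1$ are handled at the same level of rigor as the paper's two-color argument.
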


\cref{claim:3-col-vertices-ok} is proved by combining the proofs of \cref{lem:high-blue-deg,lem:vertices-ok}, with no new ideas; in particular, one does not need to split into cases as in the proof of \cref{claim:3-color-high-red-deg}.
% The only additional idea that is needed in the proof is that every $K_{r}$ in $G$ yields at least one red $K_k$ or at least one blue $K_k$. By the same averaging argument as in \cref{lem:3-color-supersaturation,claim:few-kr}, this implies that if there are many $K_r$ in $G$, then we get many red or blue $K_k$ in the original coloring, which in turn yield many red or blue copies of $H$, since we know that every vertex has red and blue degree at least $d$.

We now know that almost all edges inside each part $V_i$ are yellow, and that almost all edges between parts $V_i,V_j$ are red or blue. To conclude the proof, it remains to eliminate these ``almost''s. The next claim shows that each part is monochromatic yellow, establishing the three-color analogue of \cref{lem:parts-red}. The heart of the proof is the same as that of \cref{lem:parts-red}, but there are a few more cases to consider in this three-color setting.

\begin{claim}
	For every $1 \leq i \leq r-1$, all edges inside $V_i$ are yellow.
\end{claim}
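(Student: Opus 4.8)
By symmetry between the parts we may take $i=1$. Suppose for contradiction that there are $v,w\in V_1$ with $vw$ not yellow; since the between-part edges form a red/blue coloring of a blow-up with no monochromatic $K_k$, the colors red and blue play symmetric roles, so we may assume $vw$ is red. Exactly as in \cref{lem:parts-red}, the plan is to recolor $vw$ yellow, obtaining a coloring $\chi'$, and to show that $\chi'$ has strictly fewer monochromatic copies of $H$ than $\chi$, contradicting the minimality of $\chi$. Passing from $\chi$ to $\chi'$ destroys precisely the red copies of $H$ through $vw$ and creates precisely the copies of $H$ through $vw$ all of whose remaining $e(H)-1$ edges were already yellow; so it suffices to show the former outnumber the latter.

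Bounding the created copies is the easy half and mirrors the corresponding step of \cref{lem:parts-red}. By \cref{claim:no-vtx-high-yellow-deg}, every vertex has yellow degree less than $(\tfrac1{r-1}+65k\sqrt\lambda)n<\tfrac2{r-1}n$; in any newly created yellow copy of $H$ the vertices $v,w$ occupy adjacent positions of $H$ (there are at most $2e(H)$ ways to choose which $H$-edge and which orientation), while every other vertex lies in the yellow neighborhood of the vertex it is joined to, leaving at most $\tfrac2{r-1}n$ choices for each. Hence the number of created copies is at most $2e(H)\bigl(\tfrac2{r-1}n\bigr)^{t-2}$. Bounding the destroyed red copies from below is where the three-color argument genuinely needs more cases than \cref{lem:parts-red}, and I expect this to be the main obstacle. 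Here $K_k$ is $k$-critical and plays the role of $H_0$, so it would suffice to exhibit many red copies of $K_k$ through $vw$ and extend each to a red copy of $H$ using red pendant edges; since every vertex has red degree at least $\tfrac{2k-6}{r-1}n$ by \cref{claim:no-very-low-r-b-deg}, each such $K_k$ extends in at least $\bigl(\tfrac{2k-6}{r-1}n-t\bigr)_{t-k}$ ways. The difficulty is that, unlike the complete multipartite graph underlying \cref{lem:parts-red}, neither the red nor the blue graph is individually dense, so the red copies of $K_k$ through $vw$ are not automatic. One would split into cases according to whether $v$ and $w$ have a large, red-dense common neighborhood --- in which case \cref{lem:supersaturation} or \cref{lem:3-color-clique-multiplicity} supplies order $n^{k-2}$ labeled red copies of $K_{k-2}$ joined in red to both $v$ and $w$, hence order $n^{k-2}$ red copies of $K_k$ through the edge $vw$ --- or not, in which case one instead invokes the regularity of \cref{lem:3-color-degree-regular} together with the scarcity of monochromatic $K_k$'s from \cref{lem:3-color-few-H0} to force $v$ (or $w$) into many red or blue copies of $K_k$, realizing the destroyed copies with $vw$ as a pendant edge instead. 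In either subcase the number of destroyed red (or blue) copies of $H$ is at least $\beta\bigl(\tfrac{2k-6}{r-1}\bigr)^{t-k}n^{t-2}$ for some $\beta=\beta(k)>0$.

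Finally one compares the two bounds. Since $t$ is taken enormous relative to $k$ and $2k-6\ge2$, we have $\beta(2k-6)^{t-k}(r-1)^{k-2}>2e(H)\,2^{t-2}$, so the number of destroyed copies strictly exceeds the number of created copies. This contradicts the minimality of $\chi$; hence every edge inside each $V_i$ is yellow. In summary, the scheme is the same recoloring argument as in \cref{lem:parts-red}, with an easy confinement bound on the created yellow copies (their pendant vertices roam over yellow neighborhoods of size $\approx\tfrac{n}{r-1}$) against a much larger count of destroyed red copies (whose pendant vertices roam over red neighborhoods), and the only genuinely new work is guaranteeing that the stray red edge $vw$ participates in enough red --- or, failing that, blue --- structure, which is what forces the extra case analysis.
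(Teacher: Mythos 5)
Your overall scheme (recolor the offending edge $vw$ yellow and compare destroyed red copies of $H$ against created yellow ones) matches the paper, and your upper bound on the created yellow copies is fine --- indeed slightly cleaner than the paper's. The genuine gap is in the lower bound on the destroyed copies, i.e.\ in producing $\Omega_k(n^{k-2})$ red copies of $K_k$ through the edge $vw$, and your proposed dichotomy does not close it. In your first case you need the common \emph{red} neighborhood $N_R(v)\cap N_R(w)$ to be large and yellow-sparse, but nothing proved so far controls this set: \cref{claim:3-col-vertices-ok} only controls $N_{RB}(v)\cap V_j$, and within each $V_j$ the split between red and blue neighbors of $v$ and of $w$ is arbitrary, so $N_R(v)\cap N_R(w)$ may well be empty. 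Your second case is a dead end rather than a fallback: realizing the destroyed copies ``with $vw$ as a pendant edge'' requires $v$ (or $w$) to lie in $\Omega_k(n^{k-1})$ red copies of $K_k$, but any such abundance already contradicts the local bound \eqref{eq:local-ramsey-blowup-ub} (each red $K_k$ through $v$ extends to $\gtrsim(\tfrac{3}{r-1}n)^{t-k}$ red copies of $H$ through $v$), so the minimality of $\chi$ forces the number of red $K_k$ through any single vertex to be smaller than $n^{k-1}$ by a factor exponential in $t$, and the pendant-realization then cannot beat the created yellow copies. (Also, ``destroyed blue copies'' cannot arise at all, since the red edge $vw$ lies in no blue copy of $H$; and \cref{lem:3-color-degree-regular} plus the \emph{scarcity} of monochromatic cliques from \cref{lem:3-color-few-H0} cannot be combined to \emph{force} many monochromatic $K_k$'s through $v$ --- that inference runs backwards.)

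The paper's mechanism for the destroyed copies is genuinely different and is the heart of this claim. It works with the common red-\emph{or}-blue neighborhoods $W_j=N_{RB}(u)\cap N_{RB}(v)\cap V_j$, which \emph{are} provably large, to produce $\Omega_k(n^{r-2})$ yellow-free copies of $K_r$ containing both endpoints of the bad edge. Each such $K_r$ contains a red or blue $K_k$ by the definition of $r=r(k,k)$, and one then rules out, in turn, the possibilities that these cliques miss both endpoints (that would give too many monochromatic $K_k$ in $V_{RB}\cup V_{RBY}$, contradicting \cref{lem:3-color-few-H0}) or exactly one endpoint (that would violate \eqref{eq:local-ramsey-blowup-ub} at that endpoint). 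What remains forces most of the cliques to contain both $u$ and $v$, hence to be red because $uv$ is red, yielding the required $\Omega_k(n^{k-2})$ red $K_k$ through the edge, which are then extended via \cref{claim:no-very-low-r-b-deg}. Without some version of this Ramsey-number pigeonholing inside yellow-free $K_r$'s, the claim that $vw$ sits in many red $K_k$ is unsupported.
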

\begin{proof}
	By relabeling the parts, we may assume that $i=1$. So suppose for contradiction that there exist $u,v \in V_1$ such that the edge $uv$ is red (the case where it is blue follows identically). For each $j>1$, let $W_j=N_{RB}(u) \cap N_{RB}(v) \cap V_j$ denote the set of vertices in $V_j$ which are common neighbors of $u$ and $v$ in $G$. By \cref{claim:3-col-vertices-ok}, we know that $\ab{W_j} \geq (1-4r \theta) \ab{V_j}\geq \frac 12 \ab{V_j}$ for all $j>1$, which implies that for all $2 \leq i \neq j \leq k-1$, we have
	\[
		e_{RB}(W_i,W_j) \geq \ab{W_i} \ab{W_j} - r^2 \varepsilon \ab{V_i}\ab{V_j} \geq (1-4r^2 \varepsilon) \ab{W_i} \ab{W_j}.
	\]
	Therefore, if we pick a random vertex $w_j$ from each $W_j$, then by the union bound, they will form a copy of $K_{r-2}$ in $G$ with probability at least $1- \binom{r-2}2 \cdot 4r^2 \varepsilon \geq \frac 12$. Thus, we find that the number of
	copies of $K_r$ in $G$ containing the vertices $u$ and $v$ is at least
	\[
		\frac 12 \prod_{j=2}^{r-1} \ab{W_j} \geq \frac 12 (1-4r \theta)^{r-2} \prod_{j=2}^{r-1} \ab{V_j} \geq \frac 12 (1-4r \theta)^{r-2} \left(\frac{n}{r}\right)^{r-2} \geq 4 r^{-r} n^{r-2},
	\]
	since we picked $\theta$ sufficiently small with respect to $k$, and thus with respect to $r$, and since $r \geq 4$.
	Let $\Q$ denote this collection of $K_r$.

	For every clique $Q\in \Q$, we fix some red or blue $K_k$ contained in $Q$, and call it $Q'$. Let $\Q_1 \subseteq \Q$ be the set of $Q \in \Q$ such that $Q'$ does not contain either $u$ or $v$. Note that every red or blue $K_k$ can appear as $Q'$ for at most $n^{r-k-2}$ choices of $Q \in \Q_1$, for having fixed the vertices of the $K_k$, we need to pick $r-k-2$ other vertices besides $u$ and $v$ to complete to a $K_r$ in $\Q_1$. Therefore, if $\ab{\Q_1} \geq r^{-r} n^{r-2}$, then we find at least $r^{-r} n^{k}$ red or blue $K_k$ not containing $u$ or $v$. At least half of these are blue (say), which contradicts \cref{lem:3-color-few-H0}, since we have found many blue $K_k$ in $V_{RB}\cup V_{RBY}$.

	Next, let $\Q_2$ denote the set of $Q \in \Q$ such that $Q'$ contains $u$ but not $v$. By the same reasoning, if $\ab{\Q_2} \geq r^{-r} n^{r-2}$, then we can find at least $r^{-r} n^{k-1}$ monochromatic $K_k$ containing $u$ but not $v$. Say that at least half of these are blue. Each of them extends to at least $(d-t)_{t-k-1}$ blue labeled copies of $H$ containing $u$, since every vertex has blue degree at least $d$. As in the proof of \cref{claim:no-r-or-b-sets,claim:no-vtx-high-yellow-deg}, this is a contradiction to  the bound (\ref{eq:local-ramsey-blowup-ub}).
	By the same reasoning, we see that $\ab{\Q_3} < r^{-r} n^{r-2}$, where $\Q_3$ denotes the set of $Q \in \Q$ such that $Q'$ contains $v$ but not $u$.

	Therefore, if we let $\Q_4$ denote the set of $Q \in \Q$ such that $Q'$ contains both $u$ and $v$, we find that $\ab{\Q_4} \geq r^{-r} n^{r-2}$. By the same averaging as above, this implies that there are at least $r^{-r} n^{k-2}$ red $K_k$ which contain the edge $uv$ (we know they must be red because the edge $uv$ is red). Moreover, by \cref{claim:no-very-low-r-b-deg}, every vertex in such a red $K_k$ has red degree at least $\frac{2k-6}{r-1}n \geq \frac{3}{r-1}n$, so such a red $K_k$ extends to at least $(\frac{3}{r-1}n - k)_{t-k} =(1-o(1))(\frac{3}{r-1})^{t-k} (n)_{t-k}$ labeled red copies of $H$. So in total, the edge $uv$ lies in at least
	\[
		(1-o(1)) r^{-r} \left(\frac{3}{r-1}\right)^{t-k} (n)_{t-2}
	\]
	labeled red copies of $H$. Now we create a new coloring $\chi'$ by recoloring the edge $uv$ yellow, and estimate how many yellow copies of $H$ are produced. There are at most $k!(n)_{k-2}$ labeled yellow $K_k$ containing $u$ and $v$, and each of these extends to a labeled yellow copy of $H$ in at most $(\frac{2}{r-1}n)_{t-k}$ ways, since every vertex has yellow degree at most $(\frac{1}{r-1}+65k\sqrt \lambda)n \leq \frac{2}{r-1}n$ by \cref{claim:no-vtx-high-yellow-deg}. Similarly, there are at most $2k!(n)_{k-1}$ labeled yellow $K_k$ containing exactly one of $u$ or $v$, and each of these extends to a labeled yellow copy of $H$ containing both $u$ and $v$ in at most $(\frac{2}{r-1}n)_{t-k-1}$ ways. In total, by recoloring $uv$ yellow, we create at most
	\[
		k! (n)_{k-2} \left(\frac{2}{r-1}n\right)_{t-k} + 2k! (n)_{k-1} \left(\frac{2}{r-1}n\right)_{t-k-1} \leq (1+o(1)) 3k! \left(\frac{2}{r-1}\right)^{t-k-1} (n)_{t-2}
	\]
	labeled yellow copies of $H$. Since we chose $t$ sufficiently large with respect to $k$, this is less than the number of red copies of $H$ we destroy by recoloring $uv$ yellow, which shows that $\chi'$ has strictly fewer monochromatic copies of $H$ than $\chi$, a contradiction.
\end{proof}

We now know that every part $V_i$ is a monochromatic yellow clique. The final claim, analogous to \cref{lem:across-blue}, says that all the remaining edges are red or blue, and that there are no red or blue $K_k$.
\begin{claim}\label{claim:no-yellow-across}
	There is no yellow edge between $V_i$ and $V_j$ for any $1 \leq i \neq j \leq r-1$. Additionally, there is no red or blue copy of $K_k$.
\end{claim}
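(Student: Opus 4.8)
The plan is to prove \cref{claim:no-yellow-across} by a single sandwiching argument, in contrast to the recoloring argument used for the two-color analogue \cref{lem:across-blue}; this has the advantage of delivering both assertions of the claim at once, and, as a bonus, the equitability of the partition $V_1,\dots,V_{r-1}$, which is exactly what remains to finish \cref{thm:3-color}. First I would fix an equitable partition $[n] = V_1^* \sqcup \dots \sqcup V_{r-1}^*$ and recall that, since $r-1 < r(k,k)$, there is a red/blue coloring of $E(K_{r-1})$ with no monochromatic $K_k$. Blowing this coloring up along $V_1^*, \dots, V_{r-1}^*$ and coloring every within-part edge yellow gives a valid $3$-coloring of $E(K_n)$ with no red or blue $K_k$; since $H$ contains $K_k$, it then has no red or blue copy of $H$, so its only monochromatic copies of $H$ are the yellow ones lying inside a single part, which shows $m_3(H,n) \le \sum_i (\ab{V_i^*})_t$. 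On the other side, because each $V_i$ is a yellow clique (the previous claim), counting the copies of $H$ contained in a single $V_i$ shows that $\chi$ has at least $\sum_i (\ab{V_i})_t$ yellow copies of $H$. Finally, the function $f$ with $f(x) = (x)_t$ for $x \ge t$ and $f(x) = 0$ otherwise is convex, exactly as used at the end of \cref{sec:proof-main}, so $\sum_i f(a_i)$ over integer partitions $a_1 + \dots + a_{r-1} = n$ is minimized at the equitable partition; hence $\sum_i (\ab{V_i^*})_t \le \sum_i (\ab{V_i})_t$.

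Let $Y(\chi)$ and $M(\chi)$ denote the numbers of yellow and of monochromatic labeled copies of $H$ in $\chi$. Combining the three bounds above with optimality of $\chi$ gives
\[
	\sum_i (\ab{V_i})_t \;\le\; Y(\chi) \;\le\; M(\chi) \;=\; m_3(H,n) \;\le\; \sum_i (\ab{V_i^*})_t \;\le\; \sum_i (\ab{V_i})_t ,
\]
so all these quantities are equal. From $Y(\chi) = M(\chi)$, every monochromatic copy of $H$ in $\chi$ is yellow, so there is no red or blue copy of $H$; since every vertex lies in $V_{RB} \cup V_{RBY}$ by \cref{claim:VY-empty,claim:no-r-or-b-sets} and hence has red and blue degree at least $d \gg t$, a red or blue $K_k$ would extend greedily to a monochromatic copy of $H$, so there is no red or blue $K_k$ — the second assertion. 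From $Y(\chi) = \sum_i (\ab{V_i})_t$, every yellow copy of $H$ lies inside a single part; but if $uv$ were a yellow edge with $u \in V_i$ and $v \in V_j$, $i \ne j$, then we could embed a yellow copy of $H$ whose $K_k$ lies inside $V_i$, with $u$ playing a vertex of this $K_k$, one pendant placed at $v$, and the remaining $t-k-1$ pendants inside $V_i$ (valid since $\ab{V_i} \gg t$, all edges inside $V_i$ are yellow, and $uv$ is yellow), which straddles two parts, a contradiction. Hence there is no yellow edge between parts — the first assertion. Finally, the equality $\sum_i (\ab{V_i^*})_t = \sum_i (\ab{V_i})_t$ together with the strict convexity of $f$ on $[t,\infty)$ forces $V_1, \dots, V_{r-1}$ to be equitable, so $\chi$ is Ramsey-blowup-like and \cref{thm:3-color} follows.

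The one genuinely non-mechanical step, I expect, is recognizing that this sandwich is tight: that the trivial counting lower bound $\sum_i (\ab{V_i})_t$ on yellow copies of $H$ already matches the ceiling coming from optimality, the Ramsey-blowup coloring, and convexity, so that no recoloring-and-counting is needed. A direct recoloring argument here is in fact delicate, since recoloring a stray yellow cross-edge to red might complete new red copies of $K_k$, which could create far more red copies of $H$ than the yellow copies destroyed. Everything else is routine: computing the number of monochromatic copies of $H$ in the Ramsey-blowup coloring (using that $H$ contains $K_k$), the Schur-convexity of $\sum_i f(a_i)$, and the greedy extension of a monochromatic $K_k$ to a monochromatic copy of $H$, which uses only that every vertex has large red and large blue degree.
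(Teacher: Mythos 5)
Your proof is correct, and in substance it is the same as the paper's: both arguments compare $\chi$ to a Ramsey-blowup coloring, and both rest on the two observations that a yellow edge between parts completes a yellow copy of $H$ straddling two parts (using that each $V_i$ is a yellow clique of size much larger than $t$), and that a red or blue $K_k$ extends greedily to a red or blue copy of $H$ (using that every vertex lies in $V_{RB}\cup V_{RBY}$ and so has red and blue degree at least $d$). The only difference is packaging. The paper compares $\chi$ to the coloring $\chi'$ obtained by keeping the partition $V_1,\dots,V_{r-1}$ and its yellow cliques but replacing the cross-edges by a blowup of a Ramsey coloring of $K_{r-1}$; either a yellow cross-edge or a red/blue $K_k$ then makes $\chi'$ strictly better than $\chi$, a contradiction, and the equitability of the parts is handled afterwards by Jensen. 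You instead compare against the equitable Ramsey-blowup coloring and close the loop with the convexity of $x\mapsto (x)_t$, turning the strict-improvement contradiction into an equality chain; this is logically equivalent, but has the small advantage of delivering the equitability of $V_1,\dots,V_{r-1}$ --- the remaining step of \cref{thm:3-color} --- in the same stroke. One point worth making explicit in your cross-edge step: the vertex of the $K_k$ played by $u$ must carry at least one pendant edge, which is guaranteed since $t>k$ forces $s_1\geq 1$.
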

\begin{proof}
	Suppose that there is some yellow edge between $V_i$ and $V_j$. Since every edge in $V_i$ is yellow and $\ab{V_i}>t$, this edge participates in at least one yellow copy of $H$. Consider the coloring $\chi'$ defined by making all edges within the parts yellow, and making the edges between the parts be a blowup of a Ramsey coloring of $K_{r-1}$. This coloring has no red or blue $H$ (since there is not even a red or blue $K_k$), and the number of yellow $H$ is strictly smaller than in $\chi$, since there are exactly as many yellow $H$ within the parts, but none containing vertices from two different parts. This contradicts the minimality of $\chi$.

	Now suppose that there is some red or blue $K_k$ in $\chi$. In this case, we again see that $\chi'$ has strictly fewer monochromatic copies of $H$ than $\chi$: it has the same number of yellow copies of $H$, but strictly fewer red or blue copies because we destroyed at least one red or blue $K_k$.
\end{proof}
Therefore, we have found that each $V_i$ is a monochromatic yellow clique, that all remaining edges are red or blue, and that there is no red or blue $K_k$. The final step of the proof is the same as that of \cref{thm:main}: Jensen's inequality shows that any coloring of this type which minimizes the number of yellow copies of $H$ is one in which the parts are as equally-sized as possible, completing the proof.
\end{proof}

\begin{rem}
	For two graphs $F_1,F_2$, let $r(F_1,F_2)$ be the Ramsey number of $F_1,F_2$, namely the least $r$ so that any red/blue coloring of $E(K_r)$ contains a red copy of $F_1$ or a blue copy of $F_2$.
	Examining the proof of \cref{thm:3-color}, one can see that it holds in greater generality than simply appending pendant edges to cliques, but it still holds in a much more limited setting than \cref{thm:main}. Namely, let us say that a graph $H_0$ is \emph{polite} if there exists a partition of the vertex set of $H_0$ into two induced subgraphs $H_1,H_2$ so that for every $i \in \{1,2\}$ and every $v \in V(H_0)$, we have the bounds
	\[
		\frac{r(H_0,H_i)}{r(H_0,H_0)} \leq 2^{-23}
	\]
	and
	\[
		\frac{r(H_0,H_0)-1}{r(H_0, H_0 \setminus \{v\})} \geq 1+ 25 \left(\frac{r(H,H_i)}{r(H,H)}\right)^{1/4}.
	\]
	Thus, an integer $k$ is polite in the sense of \cref{def:polite} if and only if $K_k$ is polite. 

	Now, suppose that $k$ is sufficiently large, and let $H_0$ be a polite $k$-critical graph containing a copy of $K_k$. Then one can check that the proof of \cref{thm:3-color} carries through, namely that if one appends sufficiently many pendant edges to $H_0$, then the resulting graph $H$ is a three-color bonbon. However, we chose not to state and prove the theorem in this generality, simply because we don't expect it to apply to a particularly rich class of graphs. 
\end{rem}

\section{Concluding remarks}

\subsection{On the definition of three-color bonbons}\label{sec:weaker-bonbon-def}
Although the definition of a three-color bonbon is a natural generalization of that of a two-color bonbon, it is perhaps not the natural generalization one would first guess. As a consequence, there is also a mismatch between the statements of \cref{thm:main,thm:3-color}. In the case of two colors, we precisely characterized the extremal coloring minimizing the number of monochromatic copies of $H$: the Tur\'an coloring is the only such coloring. In the case of three colors, we only (conditionally) proved a result that appears weaker: that any extremal coloring has an equitable vertex partition into $r(k)-1$ yellow cliques and has no red or blue $K_k$, but we did not prove that the Ramsey-blowup coloring is the unique extremizer. Here, and throughout this section, we use the notation $r(k)\coloneqq r(k,k)$ to denote the diagonal Ramsey number.

In fact, we do not believe that there is a unique extremizer in the three-color case. More generally, we believe that there should exist red/blue colorings of a blowup of $K_{r(k)-1}$ which do not contain a red or a blue $K_k$, but are not a blowup of a Ramsey coloring, that is, a two-coloring of $E(K_{r(k)-1})$ without a monochromatic $K_k$. Given a graph $G$ and a positive integer $s$, we denote by $G[s]$ the $s$-blowup of $G$.
\begin{Def}
	Let $k$ and $s$ be positive integers. We call a red/blue coloring of $E(K_{r(k)-1}[s])$ a \emph{mixed blowup coloring} if it contains no monochromatic $K_k$, but is not the blowup of a Ramsey coloring of $E(K_{r(k)-1})$.
\end{Def}

If there exist mixed blowup colorings of $K_{r(k)-1}[s]$, then the stronger alternative definition of a three-color bonbon---that the Ramsey-blowup coloring is the unique extremal coloring---is too strong. Indeed, any mixed blowup coloring can be used as the red and blue edges of a three-coloring of $K_n$, and it will contain the same number of monochromatic $H$ as the Ramsey-blowup coloring, where $H$ is obtained from $K_k$ by appending pendant edges.

As it turns out, there is a simple characterization of when mixed blowup colorings exist.
\begin{lem}\label{lem:mixed-equivalence}
	Let $k\geq 3$ be an integer and $r\coloneqq r(k)$. The following statements are equivalent.
	\begin{enumerate}[label=(\roman*)]
		\item There exists a two-coloring of $E(K_{r-2})$ which extends to two distinct two-colorings of $E(K_{r-1})$, neither of which contains a monochromatic $K_k$.\label{item:two-extensions}
		\item For every $s \geq 2$, there exists a mixed blowup coloring of $K_{r-1}[s]$.\label{item:mixed-exist}
		\item There exists a mixed blowup coloring of $K_{r-1}[2]$.\label{item:mixed-2}
	\end{enumerate}
\end{lem}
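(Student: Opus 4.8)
The plan is to prove the cycle of implications $(i)\Rightarrow(ii)\Rightarrow(iii)\Rightarrow(i)$; the implication $(ii)\Rightarrow(iii)$ is immediate by taking $s=2$. The one observation used throughout is that a clique in a complete multipartite graph meets each part at most once, so it has at most $r-1$ vertices, one per part. Consequently, whenever a coloring of $E(K_{r-1}[s])$ is built by splitting some of the parts into finitely many blocks and coloring all edges between a block of $P_i$ and a block of $P_j$ according to one of finitely many two-colorings of $E(K_{r-1})$, every monochromatic $K_k$ in it descends to a monochromatic $K_k$ in one of those ground colorings; I will call this the \emph{projection principle}.

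For $(i)\Rightarrow(ii)$, I would start from a two-coloring $c_0$ of $E(K_{r-2})$ admitting two distinct extensions $c_1,c_2$ to $K_k$-free colorings of $E(K_{r-1})$. Writing $w$ for the extra vertex of $K_{r-1}$, the colorings $c_1$ and $c_2$ agree on $E(K_{r-2})$, hence differ on some edge $wi$ with $i\in[r-2]$; fix such an $i$. Given $s\geq 2$, I will color $K_{r-1}[s]$ with parts $P_1,\dots,P_{r-1}$ by: coloring all edges between $P_a$ and $P_b$ (for $a,b\leq r-2$) with $c_0(ab)$; choosing a partition $P_{r-1}=Q_1\sqcup Q_2$ into nonempty sets (possible since $s\geq 2$); and for $\ell\in\{1,2\}$ and $a\leq r-2$, coloring all edges from $Q_\ell$ to $P_a$ with $c_\ell(wa)$. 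Since each $c_\ell$ restricts to $c_0$ on $[r-2]$, the projection principle shows that any monochromatic $K_k$ would descend to one in $c_0$, $c_1$, or $c_2$, all of which are $K_k$-free; and the coloring is not a blowup of a Ramsey coloring because the edges between $P_{r-1}$ and $P_i$ realize both colors $c_1(wi)$ and $c_2(wi)$. This produces a mixed blowup coloring for every $s\geq 2$.

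For $(iii)\Rightarrow(i)$, let $\phi$ be a mixed blowup coloring of $K_{r-1}[2]$, with parts $P_m=\{a_m,b_m\}$. First I claim there is a part $P_m$ and a vertex $x\notin P_m$ with $\phi(a_mx)\neq\phi(b_mx)$: otherwise, for every $i\neq j$ the four edges between $P_i$ and $P_j$ would share a single color $c(ij)$, so $\phi$ would be exactly the blowup of the two-coloring $c$ of $E(K_{r-1})$, and $c$ would be $K_k$-free since a monochromatic $K_k$ in $c$ lifts to one in $\phi$ — contradicting that $\phi$ is mixed. Relabeling the parts, and if necessary swapping the two vertices of one part, I may assume $m=r-1$ and that the witnessing vertex is $x=a_{r-2}$. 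Then $U\coloneqq\{a_1,\dots,a_{r-2}\}$ spans a $K_{r-2}$ with induced coloring $c_0$, and $U\cup\{a_{r-1}\}$ and $U\cup\{b_{r-1}\}$ span copies of $K_{r-1}$ whose induced colorings are sub-colorings of $\phi$ (hence $K_k$-free) and are two distinct extensions of $c_0$, since the edge from $a_{r-2}$ gets color $\phi(a_{r-1}a_{r-2})$ in one and $\phi(b_{r-1}a_{r-2})\neq\phi(a_{r-1}a_{r-2})$ in the other. This yields $(i)$.

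The main obstacle is the combinatorial bookkeeping in $(iii)\Rightarrow(i)$: pinning down the dichotomy between ``$\phi$ is literally the blowup of a $K_k$-free two-coloring of $E(K_{r-1})$'' and ``some part contains two vertices with differing colorings to the rest of the graph'', and then organizing the relabeling so that the vertex witnessing the difference lands inside the distinguished copy of $K_{r-2}$ rather than being the adjoined vertex. Every other step reduces to the single fact that cliques meet each part at most once, which is what makes the projection principle work.
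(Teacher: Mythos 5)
Your proposal is correct and follows essentially the same route as the paper: the cycle (i)$\Rightarrow$(ii)$\Rightarrow$(iii)$\Rightarrow$(i), with (i)$\Rightarrow$(ii) obtained by splitting the part corresponding to the differing vertex into two nonempty blocks colored by the two extensions, and (iii)$\Rightarrow$(i) obtained by locating a part containing two vertices whose incident edges are colored differently and using them as the two extensions of the coloring induced on one vertex from each remaining part. The only cosmetic difference is that you spell out the ``projection principle'' and the relabeling bookkeeping that the paper leaves implicit.
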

\begin{proof}
	First, suppose that \ref{item:two-extensions} holds, and let $\chi_1,\chi_2$ be two distinct Ramsey colorings of $E(K_{r-1})$, and let $u\in V(K_{r-1})$ be a vertex such that $\chi_1$ and $\chi_2$ agree on $K_{r-1} \setminus \{u\}$. For any $s \geq 2$, consider the blowup $K_{r-1}[s]$, and let $U$ be the part of $V(K_{r-1}[s])$ corresponding to the vertex $u$. Between all pairs of vertices of $K_{r-1}[s]$ which are not in $U$, we color by blowing up $\chi_1$ (or $\chi_2$, since they agree away from $u$). We then arbitrarily partition $U$ into two non-empty sets $U_1,U_2$, and color all edges incident to $U_1$ by blowing up $\chi_1$, and all edges incident to $U_2$ by blowing up $\chi_2$. The resulting coloring is not a Ramsey-blowup coloring, since $\chi_1$ and $\chi_2$ are distinct. Additionally, it has no monochromatic $K_k$, since neither $\chi_1$ nor $\chi_2$ has a monochromatic $K_k$, so it is a mixed blowup coloring. This shows that \ref{item:two-extensions} implies \ref{item:mixed-exist}.

	It is immediate that \ref{item:mixed-exist} implies \ref{item:mixed-2}, so assume that \ref{item:mixed-2} holds. Fix a mixed blowup coloring of $K_{r-1}[2]$. Since this is not a blowup coloring, there must exist two vertices $u,u'$ in one part $U$ whose incident edges are not colored identically. Let $v_1$ be a vertex in some part $U_1$ such that the edges $uv_1$ and $u'v_1$ receive different colors, and let $v_2,\dots,v_{r-2}$ be arbitrary vertices from the $r-3$ parts other than $U$ and $U_1$. Then $v_1,v_2,\dots,v_{r-2}$ 
	span a coloring of $K_{r-2}$ which extends to two distinct Ramsey colorings of $K_{r-1}$, by choosing either $u$ or $u'$ as the extension. This shows that \ref{item:two-extensions} holds, and completes the proof.
\end{proof}

We conjecture that mixed blowup colorings exist for infinitely many $k$.
\begin{conj}\label{conj:mixed}
	There exist a mixed blowup coloring of $K_{r(k)-1}[2]$ for infinitely many $k$.
\end{conj}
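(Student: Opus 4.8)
\emph{Proof proposal.} The plan is to pass through the equivalence of \cref{lem:mixed-equivalence}: it suffices to produce, for infinitely many $k$, a two-colouring of $E(K_{r(k)-2})$ that extends in at least two distinct ways to a two-colouring of $E(K_{r(k)-1})$ with no monochromatic $K_k$. Writing $r=r(k)$, this is the same as finding a Ramsey colouring $\chi$ of $K_{r-1}$ together with a vertex $u$ whose \emph{link} admits a nontrivial re-partition: if $A$ is the red neighbourhood of $u$ and $B=V(K_{r-1})\setminus(\{u\}\cup A)$ its blue neighbourhood, then $A$ has no red $K_{k-1}$ and $B$ has no blue $K_{k-1}$, and we want a second partition $(A',B')$ of $V(K_{r-1})\setminus\{u\}$ with the same two properties. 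One should note at the outset why this is delicate: since $r-2\ge r(k,k-1)$ for large $k$ by \cref{lem:consecutive-ramsey-difference}, every Ramsey colouring of $K_{r-2}$ already contains both a red and a blue $K_{k-1}$, so any admissible $A$ (and $A'$) must be a proper, nonempty, and tightly balanced subset; there is very little slack to play with near the critical order.

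The first thing I would try is a pure counting argument. Let $N_m$ denote the number of labelled Ramsey colourings of $K_m$, i.e.\ two-colourings of $E(K_m)$ with no monochromatic $K_k$. Deleting one fixed vertex maps labelled Ramsey colourings of $K_{r-1}$ to labelled Ramsey colourings of $K_{r-2}$, and a fibre of size at least $2$ is precisely a colouring of $K_{r-2}$ with two Ramsey extensions. Hence, if $N_{r-1}>N_{r-2}$ for infinitely many $k$, then by pigeonhole such a fibre exists and \cref{conj:mixed} follows. So the target reduces to showing that, infinitely often, there are strictly more good colourings on $r(k)-1$ vertices than on $r(k)-2$ vertices.

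The step I expect to be the main obstacle is exactly this inequality, and more broadly any quantitative handle on Ramsey colourings at the threshold $r(k,k)$. The behaviour at the boundary really is touch-and-go: for $k=3$ there is essentially one Ramsey colouring of $K_5$ (the pentagon), so $N_5=12<18=N_4$, and indeed one checks directly that no mixed blowup colouring of $K_5[2]$ exists; for $k=4$ the unique Ramsey colouring of $K_{17}$ is the Paley colouring, again leaving essentially no freedom to re-partition a link. The content of the conjecture is precisely that this rigidity eventually breaks for large $k$. A more constructive route would be to take a concrete near-extremal colouring (a Paley-type or pseudorandom algebraic construction, or a probabilistic colouring obtained from the Lov\'asz Local Lemma with a little slack) and locate within it a vertex carrying two admissible link-partitions; but any such argument would first have to identify a colouring living on exactly $r(k)-1$ vertices, and $r(k,k)$ is unknown for every $k\ge 5$, so one would be forced to argue without ever pinning down $r(k)-1$. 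For this reason the conjecture seems out of reach of current methods, and I would regard a proof of even the weaker statement $N_{r(k)-1}>N_{r(k)-2}$ for infinitely many $k$ as the crucial missing ingredient.
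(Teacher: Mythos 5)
This statement is posed in the paper as an open conjecture, and \cref{sec:weaker-bonbon-def} contains no proof of it --- only heuristic evidence and an explanation of why the hedge ``infinitely many $k$'' is used. You correctly recognize this: rather than claiming a proof, you identify the central obstacles, and your account of them agrees with the paper's. In particular, your observation that for $k=3$ and $k=4$ the (unique, vertex-transitive) Ramsey colourings of $K_{r(k)-1}$ leave no room to re-partition a link, and your observation that for $k\ge 5$ one cannot even pin down the value of $r(k)-1$, both mirror the paper's own discussion. What you add that is genuinely not in the paper is the clean counting reduction: restriction to $r(k)-2$ vertices maps labelled Ramsey colourings of $K_{r(k)-1}$ to labelled Ramsey colourings of $K_{r(k)-2}$, a fibre of size at least $2$ is exactly item (i) of \cref{lem:mixed-equivalence}, and hence $N_{r(k)-1}>N_{r(k)-2}$ for infinitely many $k$ would imply \cref{conj:mixed}. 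This is a legitimate and pleasant reframing, though --- as you acknowledge, and as your explicit computation $N_5=12<18=N_4$ for $k=3$ illustrates --- the inequality itself is equally out of reach and need not hold for every $k$. Where the paper diverges from your approach is in the kind of evidence offered: rather than counting, the authors point to the three non-isomorphic Ramsey $(3,4)$-colourings of $K_8$, two of which differ on a single vertex, and --- conditionally on the McKay--Radziszowski conjecture that $r(5,5)=43$ with exactly $328$ Ramsey colourings --- to an explicit pair of Ramsey $(5,5)$-colourings of $K_{42}$ in McKay's database that differ in one edge. Your proposal could be strengthened by mentioning this conditional $k=5$ evidence, which is the closest thing currently known to a verification of the conjecture for a single diagonal value.
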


Thanks to \cref{lem:mixed-equivalence}, to find a mixed blowup coloring, it suffices to exhibit two Ramsey colorings of $K_{r(k)-1}$ which differ on a single vertex. Unfortunately, we are not able to do this for any value of $k$. A big part of the problem is doing this requires knowing the value of $r(k)$, and this is only known for $k=3,4$. Additionally, for both of these values, it is known that there is a unique coloring of $E(K_{r(k)-1})$ with no monochromatic $K_k$, and in this unique Ramsey coloring, all vertices have the same red and blue degree. This implies that any coloring of $K_{r(k)-2}$ vertices extends to a Ramsey coloring of $K_{r(k)-1}$ in at most one way.
For a similar reason, we only conjecture that mixed colorings exist for infinitely many $k$, as opposed to existing for all sufficiently large $k$. Indeed, it seems plausible that for infinitely many very special values of $k$, there is a unique, highly structured Ramsey coloring (e.g.\ one coming from a Paley graph, as happens for $k=3$ and $k=4$). For such $k$, there should not exist mixed blowup colorings. However, for ``most'' $k$, we expect there to be several different Ramsey colorings, among which one can likely find two that differ on a single vertex.

For off-diagonal Ramsey numbers, we can prove the existence of mixed blowup colorings. Indeed, it is known (see e.g.\ \cite{MR1670625}) that $r(3,4)=9$, and that there are exactly three non-isomorphic colorings of $K_8$ with no red $K_3$ and no blue $K_4$. The following figure shows these three colorings, where the edges correspond to red edges, the non-edges correspond to blue edges, and the dashed edges can be colored either red or blue. In particular, we find that there are distinct colorings that differ only on one vertex, namely any endpoint of one of the dashed edges.
\begin{center}
	\begin{tikzpicture}
		\foreach \x in {1,...,8} \node[vert] (\x) at (45*\x+22.5:2) {};
		\foreach \x [remember=\x as \lastx (initially 8)] in {1,...,8} \draw[thick] (\lastx) -- (\x);
		\draw[thick] (1)--(5);
		\draw[thick] (2)--(6);
		\draw[thick,dashed] (3)--(7);
		\draw[thick,dashed] (4)--(8);
	\end{tikzpicture}
\end{center}

It is known that there are at least 328 colorings of $K_{42}$ without a monochromatic $K_5$, and in \cite[Conjecture 2]{MR1438619}, it is conjectured that $r(5,5)=43$ and that these 328 are the only Ramsey colorings for $K_5$. If this conjecture is true, then it establishes the existence of mixed blowup colorings for $k=5$. Indeed, several of these colorings of $K_{42}$ contain edges that can be colored in one of two ways. For example, using McKay's list \cite[\texttt{r55\_42some.g6}]{McKayList} of Ramsey (5,5,42) graphs, one can readily check that deleting the edge $(31, 39)$ from the first graph yields a graph isomorphic to the 13th graph. In other words, these two colorings differ only on a single edge, and in particular on a single vertex.

For completeness, here are these two $42$-vertex graphs, which differ in a single edge and which both have clique and independence numbers $4$, presented in the \texttt{graph6} format.
\begin{itemize}
	\item 
	\begin{verbatim}
		i?Udjp^j}?W@`bIRhHgk\SY~ECeQS\CniuKP]RQLdsX~F?b|L?h_SvygSNziSVdZ`P|
		CxamFHKax[PhPyVEYxAqkY\_xCfYxNscNtb]k_uFsLruaJwr`nPMMc]\qGhwyh
		fLjTELQ}T]h@qtuW
	\end{verbatim}
	\item 
	\begin{verbatim}
		i?Udjp^j}?W@`bIRhHgk\SY~ECeQS\CniuKP]RQLdsX~F?b|L?h_SvygSNziSVdZ`P|
		CxamFHKax[PhPyVEYxAqkY\_xCfYxNscNtb]k_uFsLruaJwr`nPMMc]\qGhwyh
		dLjTELQ}T]h@qtuW
	\end{verbatim}
\end{itemize}
The only difference between the strings is the first character of the third row in both cases, corresponding to the fact that the two graphs differ in only one edge.

\subsection{Even more colors}\label{sec:more-colors}
Let $k$ be a large integer, and let $H$ be obtained from $K_k$ by adding sufficiently many pendant edges.
Given that we proved that the Ramsey multiplicity behavior of $H$ is determined by a blowup coloring of a Ramsey coloring on one fewer color for both two and three colors, it is natural to expect the same behavior to persist for arbitrarily many colors. However, there appear to be major obstacles to proving such a thing, and indeed, it may not be true. Firstly, our proof technique seems to fail right away when dealing with at least four colors: in the step where we partition the vertices according to which colors they have high degree in, it is not clear how to prove that all but one of these sets is small.

To see this, we recall the simple fact, usually attributed to Lefmann \cite{MR932230}, that for any integers $k,q_1,q_2$,
\begin{equation}
	r_{q_1 + q_2}(k) -1 \geq (r_{q_1}(k)-1)(r_{q_2}(k)-1).\label{eq:product-coloring}
\end{equation}
Indeed, given an optimal $q_1$-edge coloring of $K_{r_{q_1}(k)-1}$ and an optimal $q_2$-edge coloring of $K_{r_{q_2}(k)-1}$, we can form a $(q_1+q_2)$-coloring of $K_{(r_{q_1}(k)-1)(r_{q_2}(k)-1)}$ by taking a lexicographic product, which will have no monochromatic $K_k$. Equivalently, we can blow up the $q_1$-coloring to parts of size $r_{q_2}(k)-1$, and then color each part according to the $q_2$-coloring.

Let us suppose that there exist some $k,q_1,q_2$ for which inequality (\ref{eq:product-coloring}) is actually an equality. In that case, there are many non-isomorphic colorings of $K_n$ with $q_1+q_2+1$ colors, all of which yield the same bound on the Ramsey multiplicity constant of $H$. Indeed, we may first equitably partition $K_n$ into $r_{q_1}(k)-1$ parts, and color the edges between these parts according to a blowup of the $q_1$-coloring. Inside each part, we pick any $q_2$-subset of the remaining $q_2+1$ colors, and color according to a blowup of the $q_2$-coloring. Finally, inside each sub-part, we use the remaining color to form a monochromatic clique of size $n/((r_{q_1}(k)-1)(r_{q_2}(k)-1))$. If we make the same choice inside each top-level part, we get the Ramsey-blowup coloring. However, if we make different choices inside each part, we'll obtain another coloring yielding the same multiplicity bound, which is Ramsey-blowup-like. Indeed, in any Ramsey-blowup-like coloring, all copies of $H$ have the same color, whereas here they may have different colors. 

All of this works under the assumption that (\ref{eq:product-coloring}) is tight, which may seem like a very strong assumption. Nonetheless, until very recently \cite{MR4246789,2105.08850}, the best known lower-bound constructions for $r_{q_1+q_2}(k)$ for $q_1+q_2 \geq 5$ were of this product form. Moreover, it is a major open problem (see e.g.\ \cite{Alon,MR1815606}) to determine whether $r_q(k)$ grows exponentially or super-exponentially as a function of $q$ (for fixed $k$); the question of whether (\ref{eq:product-coloring}) is tight is a special case of this question. 

Thus, it seems as though proving that $H$ is a $q$-color bonbon for $q \geq 5$ is likely to be very difficult, and it is possible that there do not exist any $q$-color bonbons for $q \geq 5$.

\subsection{Other open problems}\label{sec:open-problems}

Recall that in \cref{thm:main}, we prove that we obtain a bonbon if we add $t \geq (1000kh)^{10} h^{10k}$ pendant edges to a $k$-critical graph $H_0$ with $h$ vertices. We made no effort to optimize the constants in the lower bound on $t$, though our proof technique does require $t$ to be at least exponentially large in $k$. Moreover, as discussed in the introduction, if $H_0=K_k$, then some lower bound on $t$ is necessary: if we add $o(k^2/{\log k})$ pendant edges to $K_k$, the result will not be a bonbon, since the random coloring yields a stronger upper bound on $c(H)$ than the Tur\'an coloring. 
It would be interesting to determine what the correct lower bound on $t$ is; for instance, in case $H_0=K_k$, is $H$ a bonbon even if $t$ is only polynomial in $k$?

It would also be very interesting to prove that other graphs are bonbons (in any number of colors). A natural place to start is in the family of graphs that we call \emph{generalized lollipops}. We say that a graph $H$ is a $(k,t)$-generalized lollipop if it has $t$ vertices and contains a $K_k$ whose deletion yields a forest. Equivalently, a $(k,t)$-generalized lollipop is obtained from $K_k$ by attaching trees comprising $t-k$ total vertices to the vertices of the $K_k$. For all such graphs, the Tur\'an coloring yields an upper bound on their Ramsey multiplicity constant of $(k-1)^{1-t}$.
\begin{conj}\label{conj:generalized-lollipop}
	If $k \geq 4$ and $t$ is sufficiently large in terms of $k$, then any $(k,t)$-generalized lollipop is a two-color bonbon.
\end{conj}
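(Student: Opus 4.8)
The plan is to run the argument of \cref{thm:main} with $H_0=K_k$, which is $k$-critical, so that the only substantive new issue is how a monochromatic $K_k$ is extended to a monochromatic copy of $H$. One cannot simply quote \cref{thm:main} by taking $H_0$ to be ``$H$ with all leaves removed'', since for a generalized lollipop whose trees are deep (e.g.\ long paths) this $H_0$ has nearly $t$ vertices, and \cref{thm:main} requires the number of pendant edges to be huge compared to $|V(H_0)|$. However, the ``easy'' inputs carry over: since $H$ is connected with $\chi(H)=k$, \eqref{eq:turan-coloring-ub-mhn} gives $m(H,n)\le(k-1)^{1-t}(n)_t$, and since $e(H)=\binom k2+(t-k)$ and $k\ge 4$, for $t$ large this Tur\'an bound beats the random bound $2^{1-e(H)}$, consistent with $H$ being a bonbon with $c(H)=(k-1)^{1-t}$. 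The general lemmas of \cref{sec:general-lemmas} (degree-regularity, clique multiplicity, stability, supersaturation) are stated for arbitrary graphs and apply verbatim.

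The real difficulty is the following. Throughout \cref{sec:proof-main} one repeatedly takes a monochromatic (say blue) $K_k$ all of whose vertices have blue degree at least $d=(1+\lambda)n/(k-1)$ and extends it to roughly $d^{t-k}$ blue copies of $H$ by attaching pendant edges. For a generalized lollipop the $t-k$ extra vertices form trees of arbitrary depth, and embedding a tree $T_i$ rooted at $u_i$ into the blue graph also requires the \emph{internal} vertices of the embedded tree — which need not be clique vertices, and need not lie in the high-blue-degree set under consideration — to have blue degree $\approx d$. Crucially one cannot afford to lose even a constant factor per tree edge: replacing $d^{t-k}$ by $(d/C)^{t-k}$ costs a factor $C^{t-k}$, which for large $t$ swamps the polynomial-in-$k$ gains and destroys the contradictions with the Tur\'an bound.

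The proposed resolution has two phases. \emph{Phase 2} is everything after the analogue of \cref{lem:every-vertex-high-blue-deg}, i.e.\ after we know that (WLOG) every vertex of $K_n$ has blue degree at least $(1-\tfrac{1}{k-1}-o(1))n$: from that point on tree-embedding is free, since any tree on $j$ vertices embeds into the blue graph rooted at any prescribed vertex in at least $((1-\tfrac{1}{k-1}-o(1))n)^{j-1}$ ways — at least as many as $j-1$ pendant edges would give — so the Phase-2 analogue of \cref{lem:few-H0-in-high-deg} (now proved against this global minimum-degree hypothesis), the bound on blue $K_k$ in \cref{claim:few-Kk}, the application of \cref{lem:stability}, the analogue of \cref{lem:high-red-deg}, the recoloring steps \cref{lem:parts-red} and \cref{lem:across-blue}, and the concluding Jensen argument all carry through with only cosmetic changes. \emph{Phase 1} — establishing that minimum-degree bound — is where the work lies, and it is the main obstacle, because the early claims \cref{lem:V0-small}, \cref{lem:V_R-or-V_B-small} and \cref{claim:V_B-few-high-deg} invoke \cref{lem:few-H0-in-high-deg} \emph{before} any global degree control is available, so the Phase-2 version does not suffice.

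To carry out Phase 1 I would attempt to bootstrap. Let $W=\{v:d_B(v)<d\}$ be the set of low-blue-degree vertices; since $2d<n$ for $k\ge 4$, every vertex of $W$ has red degree close to $n$. When embedding a tree from a blue $K_k$ one is only ever forced out of the high-blue-degree region through vertices in $W$, and a double-counting argument shows that the number of blue edges incident to $W$ is at most $|W|\cdot d$, so only a $o(1)$-fraction of vertices can have an $\Omega(1/k)$-fraction of their blue neighbours in $W$; deleting those few vertices (and using \cref{lem:clique-multiplicity} on what remains) one can hope to embed each tree losing only a $(1-o(1))$ factor per edge, which is enough to preserve the contradictions. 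The crux — and the place I expect genuine new ideas to be needed — is the recursive nature of this: ``a vertex is dangerous if too many of its blue neighbours are dangerous'', so controlling the dangerous set requires a more delicate structural analysis of $W$ and its blue neighbourhood (or else an entirely different route, e.g.\ first forcing one colour to have high minimum degree by a global averaging argument). Once the minimum-degree condition is in hand, the rest of the proof follows \cref{sec:proof-main} essentially line by line, with $K_k$ in the role of $H_0$ and trees embedded greedily inside the large blue neighbourhoods.
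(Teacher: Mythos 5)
The statement you are addressing is \cref{conj:generalized-lollipop}, which the paper poses as an \emph{open problem} in its concluding section; there is no proof of it in the paper, so there is nothing to compare your argument against. What you have written is, by your own admission, a proposal rather than a proof: you explicitly say that the crux of Phase 1 is ``the place I expect genuine new ideas to be needed.'' That gap is real and is essentially the reason the statement is a conjecture. Your diagnosis of the difficulty is accurate --- one cannot take $H_0$ to be the leafless core of $H$ (the hypothesis $t\geq(1000kh)^{10}h^{10k}$ of \cref{thm:main} fails for deep trees), and the extension counts such as the $(d-h)_{t-h}$ in \cref{lem:few-H0-in-high-deg} break down because internal tree vertices land on arbitrary vertices of $K_n$ whose degrees are uncontrolled, and a constant-factor loss per tree edge is fatal against the $(1+\lambda)^{t}$-type margins.

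One further caution: your Phase 2 is not as ``cosmetic'' as claimed, because several of the later steps embed trees into the \emph{red} graph, where even after the analogue of \cref{lem:every-vertex-high-blue-deg} there is no global minimum-degree guarantee (a vertex may have red degree $0$). In particular, the proof of \cref{lem:every-vertex-high-blue-deg} itself extends red copies of $H_0'$ by attaching $t-h$ red pendant edges to vertices of controlled red degree; for a generalized lollipop the depth-$\geq 2$ tree vertices hang off arbitrary intermediate vertices, so one must additionally show that the embedding can avoid the $O(\sqrt{\lambda})n$ exceptional vertices of low red degree while preserving the $e^{8(k-1)\sqrt{\lambda}t}$ gain --- a loss of $e^{-\Theta(k\sqrt{\lambda}t)}$ that is of the same order as the gain, so this requires a genuinely more careful bookkeeping, not a line-by-line transcription. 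Your bootstrap idea for Phase 1 (controlling the set $W$ of low-blue-degree vertices and the ``recursively dangerous'' vertices) is a reasonable direction, but as you note it is circular as stated, and until it is carried out the conjecture remains open.
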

A natural special case of this conjecture is interesting in its own right, and may be easier to prove than the full \cref{conj:generalized-lollipop}. Namely, the \emph{lollipop} graph $L_{k,t}$ which is obtained from $K_k$ by appending to a single vertex a path with $t-k$ edges.
\begin{conj}\label{conj:path-lollipop}
	If $k \geq 4$ and $t$ is sufficiently large in terms of $k$, then $L_{k,t}$ is a two-color bonbon.
\end{conj}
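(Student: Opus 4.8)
The plan is to follow the architecture of the proof of \cref{thm:main}, viewing $L_{k,t}$ as obtained from the $k$-critical graph $K_k$ by attaching a path $P$ with $t-k$ edges. The Tur\'an coloring still gives $m(L_{k,t},n)\le (k-1)^{1-t}(n)_t$, and \cref{lem:degree-regular} upgrades this to the local bound $m_v(L_{k,t},\chi)\le (1+o(1))t(k-1)^{1-t}(n-1)_{t-1}$ for an optimal coloring $\chi$. From there the skeleton is the same sequence of refinements as in \cref{sec:proof-main}: partition $V(K_n)$ according to which colors each vertex has degree at least $d=(1+\lambda)n/(k-1)$ in; show all but one of these parts is small (analogues of \cref{lem:V0-small,lem:V_R-or-V_B-small}); show every vertex has red degree close to $\tfrac{n}{k-1}$ (analogue of \cref{lem:every-vertex-high-blue-deg}); deduce that there are few blue $K_k$; apply \cref{lem:stability} to the blue graph to obtain a near-balanced partition $V_1\sqcup\dots\sqcup V_{k-1}$ with few internal blue edges; and finish with the local recoloring arguments of \cref{lem:parts-red,lem:across-blue} together with Jensen's inequality. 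Throughout, $K_k$ plays the role previously played by $H_0$, with $\wt{H_0}$ and $\wh{H_0}$ replaced by appropriate subgraphs of $K_k$.

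The one genuinely new point is the following. In \cref{thm:main}, extending a red copy of $H_0$ (or of $H_0$ with a vertex deleted) to a red copy of $H$ is essentially free: the extra edges are pendant, so one only needs the vertices of the copy of $H_0$ to have large red degree, and the new leaves may go anywhere. Here the extra structure is a path, and extending a red $K_k$ to a red $L_{k,t}$ requires a red path of length $t-k$ rooted at a vertex of the clique; building such a path greedily needs a lower bound on the red degree of \emph{each} successive vertex along the path, not just of the $k$ clique vertices. Consequently the analogues of \cref{lem:few-H0-in-high-deg} and \cref{lem:every-vertex-high-blue-deg} cannot be proved in the same way, and — crucially — the trick of exploiting a single vertex of abnormally high red degree (which, in \cref{thm:main}, pays off because \emph{every} pendant edge at that vertex contributes a multiplicative gain) no longer works, since only the first edge of the path benefits from one high-degree vertex.

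My proposal for circumventing this is a two-stage degree bootstrap. As a painless first step one proves that $\chi$ has no monochromatic clique of size larger than $(\tfrac{1}{k-1}+\mu)n$ for a small constant $\mu>0$: a red clique of size $m\ge t$ already contains $(m)_t$ labeled red copies of $L_{k,t}$, which for $t$ large contradicts the Tur\'an bound — and this uses no path extension at all, since a clique contains every graph. Then one must promote this to the statement that \emph{every} vertex has blue degree at least $d$ (equivalently red degree at most $n-d$). Once this min-degree statement is in hand, the remaining claims only ever need to build monochromatic paths inside sets — ultimately inside a single part $V_i$ of the partition — whose vertices all have red degree $(1\pm o(1))\tfrac{n}{k-1}$, so a red $K_k$ whose attachment vertex lies there extends to roughly $(\tfrac{n}{k-1})^{t-k}$ red copies of $L_{k,t}$ by a routine greedy count, and the rest of the proof goes through as in \cref{sec:proof-main}, concluding via Jensen's inequality that $\chi$ is the Tur\'an coloring.

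The hard part, and the main obstacle, is precisely this bootstrap to the global min-degree statement. In the pendant case it is immediate; here one has to rule out, say, a vertex $v$ of red degree far above $\tfrac{n}{k-1}$ without being able to route long red paths through $N_R(v)$, and the dichotomy one would like to run (either $N_R(v)$ is red-clique-rich, so there are too many red $L_{k,t}$ through $v$, or it is red-sparse, so the blue graph on $N_R(v)$ is dense and produces too many blue $L_{k,t}$) repeatedly runs into the need to build monochromatic paths — exactly what the min-degree statement would provide. A plausible route is to first establish, using only the no-large-monochromatic-clique statement together with \cref{lem:clique-multiplicity} and \cref{lem:supersaturation}, that on all but $o(n)$ vertices the red graph is already close to a disjoint union of cliques of size at most $(\tfrac{1}{k-1}+\mu)n$ — a sparse exceptional set can be avoided by a path of length $t-k\ll n$ — and only then to upgrade ``all but $o(n)$'' to ``all'' by the argument of \cref{lem:every-vertex-high-blue-deg}. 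Making this first structural step rigorous without circularity is, I expect, where the real difficulty lies, and it is presumably why \cref{conj:path-lollipop} (and the more general \cref{conj:generalized-lollipop}) is still open.
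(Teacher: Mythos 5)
The statement you are addressing is \cref{conj:path-lollipop}, which the paper states as an \emph{open conjecture}; there is no proof in the paper to compare against. Your proposal is, by your own admission, not a proof either: everything hinges on the ``two-stage degree bootstrap,'' and you explicitly leave its second stage --- promoting the absence of large monochromatic cliques to the statement that \emph{every} vertex has blue degree at least $d$, i.e.\ the analogue of \cref{lem:every-vertex-high-blue-deg} --- unproved, remarking that making it rigorous without circularity ``is where the real difficulty lies.'' That is precisely the missing idea, and nothing in the proposal supplies it. Your diagnosis of why the pendant-edge argument breaks is accurate: in the proof of \cref{thm:main}, both \cref{lem:few-H0-in-high-deg} and \cref{lem:every-vertex-high-blue-deg} extend a monochromatic copy of $H_0$ (or of $H_0$ minus a vertex) to a copy of $H$ one pendant leaf at a time, using only the degrees of the $h$ root vertices, and the contradiction in \cref{lem:every-vertex-high-blue-deg} is driven by the factor $(1+64h(k-1)\sqrt{\lambda})^{s_1}$ with $s_1\geq t/(2h)$, i.e.\ by the fact that \emph{all} the pendant edges at a single abnormally high-degree vertex profit from that vertex. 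For a lollipop the tail must be grown vertex by vertex, each step requiring degree control at a vertex not yet known to be well-behaved, and only the first edge of the path sees the high-degree vertex, so the exponential gain disappears.

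Your first step (no monochromatic clique of size exceeding $\bigl(\tfrac{1}{k-1}+\mu\bigr)n$) is correct and easy, but far too weak to substitute for the minimum-degree statement: a vertex could have red degree $0.9n$ while its red neighborhood contains neither a large red clique nor any blue structure you can convert into monochromatic copies of $L_{k,t}$ without already being able to route long monochromatic paths --- which is exactly what the minimum-degree bound is needed for. The proposed intermediate step, that the red graph is close to a disjoint union of bounded cliques on all but $o(n)$ vertices, is the kind of global structure that in \cref{sec:proof-main} is only extracted \emph{after} the minimum-degree bound, via \cref{claim:few-Kk} and \cref{lem:stability}; invoking it beforehand is the circularity you yourself flag. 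In short, the proposal correctly identifies the obstruction to adapting the proof of \cref{thm:main} but does not overcome it, and \cref{conj:path-lollipop} remains open.
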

Of course, one could also pose versions of \cref{conj:generalized-lollipop,conj:path-lollipop} for three-color bonbons, but we expect such conjectures to be even harder to resolve.

%%% AUTHOR: optional acknowledgments here
\section*{Acknowledgments} %%  you may comment this out if no Ackno
We are indebted to Jon Noel for pointing out an error in  \cref{lem:degree-regular,lem:3-color-degree-regular} in an earlier version of this paper. We would also like to thank Hao Huang for interesting discussions which led to the inclusion of \cref{subsec:edge-deletion}. Finally, we are grateful to the anonymous referees for helpful comments that greatly improved the presentation of this paper.

%%% AUTHOR:
%%% Bibliography goes here. Note that the arXiv cannot process bibtex
%%% or biber bibliographies.  Example of acceptable bibliograpy format:

% \bibliographystyle{amsplain}
% \begin{thebibliography}{99}
% \bibitem{bergelson-johnson-moreira}
% Vitaly Bergelson, John H. Johnson Jr., and Joel Moreira.
% \newblock New polynomial and multidimensional extensions of classical partition
%   results.
% \newblock 2015, arXiv:1501.02408.

% \bibitem{cilleruelo}
% Javier Cilleruelo.
% \newblock Combinatorial problems in finite fields and {S}idon sets.
% \newblock {\em Combinatorica}, 32(5):497--511, 2012.

% \end{thebibliography}
%% AUTHOR: You can generate such a bibliography from a .bib file by 
%% running pdflatex/bibtex/pdflatex/pdflatex and then pasting the .bbl file
%% between \begin{thebibliography} and \end{bibliography}

%%% AUTHOR: Include a short description of each author following the
%%% structure below. Use the same short tags used previously.  
%%% Use \imageat{} and \imagedot{} instead of "@" and "." in
%%% email addresses-this replaces the symbols with graphics to avoid 
%%% e-mail address harvesting from the .pdf file
\begin{aicauthors}
\begin{authorinfo}[jacob]
  Jacob Fox\\
  Department of Mathematics, Stanford University\\ 
  Stanford, CA, USA\\
  jacobfox\imageat{}stanford\imagedot{}edu \\
  \url{https://stanford.edu/~jacobfox/}
\end{authorinfo}
\begin{authorinfo}[yuval]
  Yuval Wigderson\\
  School of Mathematics, Tel Aviv University\\
  Tel Aviv 69978, Israel\\
  yuvalwig\imageat{}tauex\imagedot{}tau\imagedot{}ac\imagedot{}il \\
  \url{http://www.math.tau.ac.il/~yuvalwig/}
\end{authorinfo}
\end{aicauthors}

\end{document}